\newcommand{\dd}{\,\mathrm{d}}
\newcommand{\E}{\mathbb{E}}
\newcommand{\cC}{\mathcal{C}}
\newcommand{\cD}{\mathcal{D}}
\newcommand{\cE}{\mathcal{E}}
\newcommand{\cF}{\mathcal{F}}
\newcommand{\cG}{\mathcal{G}}
\newcommand{\cH}{\mathcal{H}}
\newcommand{\cI}{\mathcal{I}}
\newcommand{\cJ}{\mathcal{J}}
\newcommand{\cL}{\mathcal{L}}
\newcommand{\cO}{\mathcal{O}}
\newcommand{\N}{\mathbb{N}}
\newcommand{\R}{\mathbb{R}}
\newcommand{\cost}{\mathrm{Cost}}
\newcommand{\boldEll}{{\boldsymbol{\ell}}}
\newcommand{\MIMCest}{\mu_{\textnormal{MI}}}
\newcommand{\MLMCest}{\mu_{\textnormal{ML}}}
\let\oldHyPsd@CatcodeWarning\HyPsd@CatcodeWarning%
\renewcommand{\HyPsd@CatcodeWarning}[1]{%
  \ifnum\pdfstrcmp{#1}{math shift}=0    %
  \else                                 %
    \oldHyPsd@CatcodeWarning{#1}%
  \fi
}
  \def\beta{B}
  \def\nu{nu}
\newcommand\Item[1][]{%
	\ifx\relax#1\relax  \item \else \item[#1] \fi
	\abovedisplayskip=0pt\abovedisplayshortskip=0pt~\vspace*{-\baselineskip}}
\newtheorem{theorem}{Theorem}[section]
\newtheorem{lemma}[theorem]{Lemma}
\newtheorem{proposition}[theorem]{Proposition}
\newtheorem{corollary}[theorem]{Corollary}
\theoremstyle{remark}
\newtheorem{remark}[theorem]{Remark}
\theoremstyle{definition}
\newtheorem{example}[theorem]{Example}
\newtheorem{assumption}[theorem]{Assumption}
\begin{document}

\title[The MIMC method for semilinear SPDEs]{The multi-index Monte Carlo method for semilinear stochastic partial differential equations}

\author[A-L.~Haji-Ali]{Abdul-Lateef Haji-Ali}
\address[A-L.~Haji-Ali]{Maxwell Institute for Mathematical Sciences, Heriot-Watt University, Edinburgh EH14 4AS, UK} \email[]{a.hajiali@hw.ac.uk}

\author[H.~Hoel]{H\aa kon Hoel}
\address[H.~Hoel]{Department of Mathematics, Faculty of Mathematics and Natural Sciences, University of Oslo, 0316 Oslo, Norway} \email[]{haakonah@math.uio.no}

\author[A.~Petersson]{Andreas Petersson}
\address[A.~Petersson]{Department of Mathematics, Faculty of Technology, Linnaeus University, 351 95 V\"axj\"o, Sweden \newline and \newline Department of Mathematics, Faculty of Mathematics and Natural Sciences, University of Oslo, 0316 Oslo, Norway} \email[]{andreas.petersson@lnu.se}

\thanks{The work of A. Petersson was supported in part by the Research Council of Norway (RCN) through project no.\ 274410.}

\subjclass{60H15, 65C05, 60H35, 65Y20, 35K58}

\keywords{multi-index Monte Carlo, multilevel Monte Carlo, stochastic partial differential equations, semilinear parabolic SPDEs, exponential integrators}

\begin{abstract}
  Stochastic partial differential equations (SPDEs) are often difficult to
  solve numerically due to their low regularity and high dimensionality. These
  challenges limit the practical use of computer-aided studies and pose
  significant barriers to statistical analysis of SPDEs.
  In this work, we introduce a highly efficient multi-index Monte Carlo method
  (MIMC) designed to approximate statistics of mild solutions to semilinear
  parabolic SPDEs. Key to our approach is the proof of a multiplicative
  convergence property for coupled solutions generated by an exponential
  integrator numerical solver, which we incorporate with MIMC. We further
  describe theoretically how the asymptotic computational cost of MIMC can be
  bounded in terms of the input accuracy tolerance, as the tolerance goes to
  zero. Notably, our methodology illustrates that for an SPDE with low
  regularity, MIMC offers substantial performance improvements over other
  viable methods. Numerical experiments comparing the performance of MIMC with
  the multilevel Monte Carlo method on relevant test problems validate our
  theoretical findings. These results also demonstrate that MIMC significantly
  outperforms state-of-the-art multilevel Monte Carlo, thereby underscoring
  its potential as a robust and tractable tool for solving semilinear
  parabolic SPDEs.
\end{abstract}

\maketitle

\section{Introduction and preliminaries}

Stochastic partial differential equations (SPDEs) are notoriously difficult to
solve numerically due to their low regularity and high dimensionality. These
challenges limit the utility of computer-aided studies and act as a
significant roadblock to statistical analysis of SPDEs. Multilevel
Monte Carlo (MLMC) and its recent extension, the multi-index Monte Carlo
method (MIMC), have demonstrated remarkable success in improving the
tractability of computing statistics of problems involving computationally
expensive approximations. While these methodologies have shown promise when
applied to certain SPDEs, a comprehensive framework and robust theoretical
foundation for MIMC, particularly in the context of semilinear parabolic SPDEs,
remains to be fully established. In this paper, we construct and develop
theory for an MIMC method for weak approximations of SPDEs on bounded domains
with additive nonlinearities and multiplicative affine linear Gaussian noise.
These are formulated as It\^o stochastic differential equations (SDE) on a
Hilbert space $H$:
\begin{equation}
	\label{eq:spde}
	\begin{split}
		\dd X(t) + A X(t) \dd t &= F(X(t)) \dd t + (I + G X(t)) \dd W(t), \quad t \in (0,T], T < \infty, \\
		X(0) &= X_0.
	\end{split}
\end{equation}
Here $W$ is a $Q$-Wiener process in $H$, $A$ is a densely defined,
closed and positive definite operator with a compact inverse and $X_0
\in H$ is a suitable initial condition. We specify our assumptions on
the mappings $F$ and $G$ in Section~\ref{sec:notation} below.

The MLMC method achieves variance reduction and impressive performance by
sampling pairwise coupled realizations on a single-index hierarchy of
numerical resolutions. It was first developed for integral approximations
in~\cite{H01} and subsequently extended to weak approximation of
SDEs~\cite{K05,G08}. MLMC was first applied to SPDEs~\cite{BL12, BLS13} and has
since seen a surge of interest~\cite{LP18,LW19,P20,HKS20,HA23}. The numerical
solver we herein combine with MIMC -- the exponential integrator -- was first
applied to semilinear parabolic SPDEs in~\cite{JK09}, and later combined with
MLMC~\cite{CHJZ22} and with ensemble-based filtering
methods~\cite{JKLZ17,CHLNT21}.

The Multi-Index Monte Carlo method (MIMC) is a recent extension of MLMC that
improves efficiency even further through sampling coupled realizations on a
multi-index hierarchy of resolutions~\cite{HNT16}. The MIMC method was first
applied to high-dimensional PDE with random input and the McKean Vlasov
SDE~\cite{HT18}. It has also been studied for a linear parabolic SPDE with
additive noise for filtering problems~\cite{CDHJ18,JLX21,JLW23}, and
log-factor performance gains have achieved for weak approximations of the
Zakai SPDE~\cite{RW18} (which unlike the herein considered equation is an SPDE
with a 1D driving noise).

The main contribution of this work is the establishment of the multiplicative
convergence property for multi-index-coupled exponential integrator numerical
solutions of the SPDE, as detailed in Theorem~\ref{thm:second-order-diff}.
This essential convergence result demonstrates that the exponential integrator
method is a suitable SPDE solver to combine with MIMC. In
Section~\ref{sec:mimc_method}, we delve into the algorithmic specifics of our
MIMC method, and Theorem~\ref{thm:mimc-cost-error} describes its asymptotic
performance in terms of computational cost relative to the accuracy tolerance
as it approaches zero. Theorems~\ref{thm:mimc-cost-error}
and~\ref{thm:mlmc-cost-error} establish that, for low-regularity SPDEs, our
MIMC method significantly outperforms the state-of-the-art MLMC methods. These
theoretical assertions are substantiated by numerical experiments presented in
Section~\ref{sec:numerical_examples}.

\subsection{Notation and mathematical preliminaries}
\label{sec:notation}
For Banach spaces $U$
and $V$ (all over $\R$), we denote by $\cL(U,V)$ the linear and bounded
operators from $U$ to $V$ and set $\cL(U) := \cL(U,U)$. We write $U \hookrightarrow V$ if $U
\subset V$ and the embedding operator $I_{U \hookrightarrow V}$ fulfills $ \| I_{U \hookrightarrow V}
\|_{\cL(U,V)} < \infty$.

For separable Hilbert spaces $U$ and $V$, $\cL_2(U,V)$ is the space of
Hilbert--Schmidt operators. It is a separable
Hilbert space with inner product defined in terms of the inner
product $\langle \cdot, \cdot \rangle_V$ of $V$,
\begin{equation*}
	\langle \Gamma_1, \Gamma_2\rangle_{\cL_2(U,V)} := \sum_{j=1}^\infty \langle \Gamma_1 e_j, \Gamma_2 e_j \rangle_{V}, \quad \Gamma_1, \Gamma_2 \in \cL_2(U,V),
\end{equation*}
with $(e_j)_{j=1}^\infty$ an arbitrary orthonormal basis (ONB) of
$U$. Moreover,  it fulfills the ideal property
\begin{equation*}
	\|\Gamma_1 \Gamma_2\|_{\cL_2(U,V)} \le \|\Gamma_1\|_{\cL(H,V)} \|\Gamma_2\|_{\cL_2(U,H)}, \quad \Gamma_1 \in \cL(H,V), \Gamma_2 \in \cL_2(U,H),
\end{equation*}
where $H$ is an additional Hilbert space.  We employ the notation $\cL_2(U):=\cL_2(U,U)$. For a more detailed introduction to this class of operators, see \cite[Appendix~B]{PR07}.

We say that a
continuous (typically non-linear) mapping $F \colon U \to V$ belongs
to the class $\cG^1(U,V)$ of Gateaux differentiable mappings if, for
all $u,w \in U$, the Gateaux derivative $F'(u)w := \lim_{\epsilon
  \to 0} \epsilon^{-1} (F(u+\epsilon w)-F(u))$ exists in $V$, $F'(u)
\in \cL(U,V)$ and $F'(\cdot)w \colon U \to V$ is continuous. If, for
all $u,w_1,w_2 \in U$, it also holds that $F''(u)(w_1,w_2) :=\lim_{\epsilon \to 0} \epsilon^{-1} (F'(u+\epsilon w_2)-F'(u))w_1$
exists in $V$, $F''(u) \colon U \times U \to V$ is a symmetric and
bounded bilinear mapping and $F''(\cdot)(w_1,w_2) \colon U \to V$ is
continuous, then $F$ is said to belong to the class $\cG^2(U,V)$. Note
that if there is a Hilbert space $\tilde V$ such that $V
\hookrightarrow \tilde V$, then $F \in \cG^k(U,V)$ implies $F \in
\cG^k(U,\tilde V)$, $k \in \{1,2\}$. For $F \in \cG^1(U,V)$ and
$u,w\in U$, the map $\phi \colon \R \to V$ given by $\phi(t) = F(w +
t(u-w))$ is easily seen to be in $\cG^1(\R,V)$. We can
identify its derivative at $t$, $\phi'(t)$, with an element in $V$,
namely $F'(w + t (u-w))(u-w)$. Since $\phi'$ is continuous and
separably valued, it is Bochner integrable on $[0,1]$. By the fact
that bounded linear operators and Bochner integrals commute along with
the definition of the Gateaux derivative, it follows that
\begin{equation*}
	\left\langle \int^1_0 \phi'(t) \dd t, v \right\rangle_V = \int^1_0 \langle \phi'(t), v \rangle_V \dd t = \int^1_0 (\langle \phi(\cdot), v \rangle_V)'(t) \dd t = \langle \phi(1)-\phi(0),v \rangle_V, \quad v \in V,
\end{equation*}
From this we obtain the mean value theorem
\begin{equation*}
	F(u) - F(w) =  \int^1_0 F'(w + t (u-w))(u-w) \dd t.
\end{equation*}
If  $F\in \cG^2(U,V)$, then, for all $u \in U$,  the mapping $w \mapsto F'(w)u$ is by definition in $\cG^1(U,V)$ with derivative given by $F''(w)(u,\cdot) = F''(w)(\cdot,u)$. The mean value theorem then yields
\begin{equation*}
	\big(F'(w_1) - F'(w_2)\big)u = \int^1_0 F''(w_2 + t (w_1-w_2))(w_1-w_2,u) \dd t. %
\end{equation*}
We refer to \cite{AP95} and~\cite{AKL16} for further details on Gateaux differentiability.

We employ generic constants $C$. These vary between occurrences and
are independent of discretization parameters. For a pair of
non-negative sequences $(a_k)$ and $(b_k)$, we write
$a_k \lesssim b_k$ to signify that there exists a constant $C< \infty$
such that $a_k \le C b_k$ holds for all $k$, and $a_k \eqsim b_k$
signifies that the sequences are equivalent in the sense that both
$a_k \lesssim b_k$ and $b_k \lesssim a_k$.

From here on, we fix a real separable Hilbert space $H = (H,\langle
\cdot, \cdot \rangle, \| \cdot\|)$ and let $((e_j, \lambda_j))_{j=1}^\infty$ denote the sequence of eigenpairs
associated to the operator $A$. We let
$$
\dot{H}^r := \{v \in H : \sum_{j=1}^\infty \lambda^r_j |\langle v, e_j \rangle|^2  < \infty \}, \quad r \ge 0,
$$
and let $\langle \cdot, \cdot \rangle_{\dot{H}^r} := \langle A^{r/2}
\cdot, A^{r/2} \cdot \rangle$ denote the standard graph inner
product. Here, $A^{r/2} \colon \dot H^{r} \to H$ is defined by
\begin{equation}
		A^{r/2} v = \sum_{j=1}^\infty \lambda^{r/2}_j \langle v, e_j \rangle, \quad v \in \dot{H}^r.
	\end{equation}
For $r<0$ this expression defines a bounded operator on $H$ and $\dot{H}^r$ then denotes the completion of $H$ under the aforementioned graph norm. For all
$r,s \in \R$, we may extend $A^{r/2}$ to an operator in
$\cL(\dot{H}^s, \dot{H}^{s-r})$ and we do so without changing
notation. For further details on these types of spaces, see~\cite[Appendix~B]{K14}.

The remainder of this paper is structured as follows:
Section~\ref{sec:mimc_method} outlines the problem assumptions,
introduces the exponential integrator numerical method, and provides
both the algorithmic framework and theoretical results for our MIMC
method. In Section~\ref{sec:convergence_analysis}, we examine the
convergence properties of multi-index-coupled solutions using the
exponential integrator method. Section~\ref{sec:numerical_examples} is
dedicated to a numerical performance comparison between MLMC and
MIMC. We conclude with a summary of our findings and a discussion of
open problems in Section~\ref{sec:conclusion}. Finally, Appendix~\ref{sec:appendix}
presents norm estimates for Gateaux derivatives of a class of
nonlinear composition operators that are relevant for the
scope of our MIMC method.

\section{The MIMCEI method for semilinear SPDEs}
\label{sec:mimc_method}

Let $(\Omega, \cF, P)$ be a probability space with filtration $(\cF_t)_{t \ge 0}$,  satisfying the usual conditions. Let $W$ be a cylindrical $Q$-Wiener \((\cF_t)_{t \ge 0}\)-adapted process in $H$. We assume that its covariance operator $Q$ commutes with $A$, so that in terms of the eigenfunctions $(e_j)_{j=1}^\infty$ of $A$,
\begin{equation*}
	W(t) = \sum_{j = 1}^{\infty} \mu_j^{\frac 1 2} B_j(t) e_j, \quad t \ge 0,
\end{equation*}
for an iid sequence $(B_j)_{j=1}^\infty$ of Brownian motions, and a non-negative sequence $(\mu_j)_{j=1}^\infty \in \ell^1$. This allows for defining fractional powers of \( Q \) analogously to those of \( A \). The expansion is formal and converges in a suitably chosen space.

We let the operator $G$ be of the form
\begin{equation}
	\label{eq:G-form}
	(Gu)v = \sum_{j = 1}^\infty \zeta_j \langle u, e_{j+m}\rangle \langle v, e_j \rangle e_j, \text{ for some sequence } (\zeta_j)_{j=1}^\infty \subset \R \text{ and } m \in \N_0.
\end{equation}
In the following assumption, we specify the regularity conditions that this operator and the other parts of~\eqref{eq:spde} should satisfy.
\begin{restatable}{assumption}{mainassump}
  \label{ass:reg} For some $\eta \in [0,2)$, let $F \in \cG^1(H,H) \cap \cG^2(H,\dot{H}^{-\eta})$ and let $G$ be given by~\eqref{eq:G-form}. Assume further that for some $\nu > 0, \kappa \in (0,2), \delta \in (0,1)$ and $C<\infty$,
	\begin{enumerate}[label=(\roman*)]
		\item \label{ass:eigenvalue-growth} $\lambda_{k} \eqsim k^{\nu}$,
		\item \label{ass:X0} $X_0 \in L^{10}(\Omega,\dot{H}^\kappa)$ is $\cF_0$-measurable,
		\item \label{ass:Q} $|\zeta_j| \mu_j^{1/2} \le C \lambda_j^{\frac{1-\kappa-\delta}{2}}$ and $\sum^\infty_{j=1} \lambda_j^{\kappa-1} \mu_j \le C$,
		\item \label{ass:derivative-bounded} $\|F'(u)v\|\le C\|v\|$ for all $u,v \in H$,
		\item \label{ass:difference-regularity-transfer}
		$\|F(u)-F(v)\|_{\dot{H}^{\kappa}} \le C (1 + \|u\|^2_{\dot{H}^\kappa} + \|v\|^2_{\dot{H}^\kappa})$
		for all $u,v \in \dot{H}^\kappa$,
		\item \label{ass:derivative-negnorm} $\|F'(u)v\|_{\dot{H}^{-\eta}}\le C(1 + \|u\|^{\lceil \kappa \rceil}_{\dot{H}^{\kappa}}) \| v \|_{\dot{H}^{-\kappa}}$ for  all $u \in \dot{H}^{\kappa}, v \in H$ and
		\item \label{ass:second-derivative-bounded} $\|F''(u)(v_1,v_2)\|_{\dot{H}^{-\eta}} \le C \| v_1 \| \|v_2\|$ for all $u, v_1, v_2 \in H$.
                \end{enumerate}
\end{restatable}

We note some important facts related to this assumption. First, Assumption~\ref{ass:reg}\ref{ass:derivative-bounded} implies that $F$ is Lipschitz and of linear growth on $H$, i.e.,
\begin{equation}
	\label{eq:F-lip-lin-growth}
	\|F(u)-F(v)\| \le C \| u - v \| \quad \text{and} \quad \|F(u)\| \le C (\|u\| + 1)
\end{equation}
for some constant $C < 0$ and all $u,v \in H$. Next, the first condition on $\mu_j$ in Assumption~\ref{ass:reg}\ref{ass:Q} ensures that $G \in \cL(H,\cL_2(Q^{1/2}(H),\dot{H}^{\kappa+\delta-1}))$ while the second is equivalent to the more familiar expression $\| I_{Q^{1/2}(H) \hookrightarrow \dot{H}^{\kappa-1}} \|_{\cL_2(Q^{1/2}(H),\dot{H}^{\kappa-1})} = \| A^{\frac{\kappa-1}{2}}Q^{\frac 1 2}\|_{\cL_2(H)} < \infty$. Moreover, if the spectral shift parameter $m$ is non-zero, $G$ does not satisfy the classic symmetry condition (see~\cite[Assumption~3, Remark~1]{JR15}) required for SPDE Milstein schemes without requiring expensive simulations of iterated integrals. Finally, as a consequence of Assumption~\ref{ass:reg}\ref{ass:eigenvalue-growth}, we have $\lambda_{j+m} \simeq \lambda_j$ for fixed $m$.

\begin{example}
	\label{ex:heat}
	Let $H=L^2(\cD)$ for a domain $\cD \subset \R^d$, $d=1,2,3$, which is either convex or has $\cC^2$ boundary $\partial \cD$. Let $A = - \Delta$ be the negative Laplacian with homogeneous zero Dirichlet boundary conditions. The eigenvalues then satisfy \(\lambda_k \eqsim k^{2/d}\). Let $F$ be a composition operator, i.e., $F(u)(x) = f(u(x))$ for a.e.\ $x \in \cD$. Provided that $f$ is three times continuously differentiable with bounded derivatives (note that $f$ itself need not be bounded), then~\eqref{eq:F-lip-lin-growth}, Assumptions~\ref{ass:reg}\ref{ass:derivative-negnorm}-\ref{ass:second-derivative-bounded} are satisfied with $\eta \in (d/2,2)$ and $\kappa \in [0,\eta] \setminus \{1/2,3/2\}$. This is shown in Appendix~\ref{sec:appendix}. We note in particular that the subtraction in the left hand side of Assumption~\ref{ass:reg}\ref{ass:difference-regularity-transfer} is needed for the Dirichlet boundary conditions to be included.

	Results on fractional differential operators may be deduced from those of Appendix~\ref{sec:appendix}. Let $A_r := A^{r}$ for some $r \in (0,1)$, and write $\dot{H}^s_r, s \in \R,$ for the spaces introduced in Section~\ref{sec:notation} with $A$ replaced by $A_r$. Then clearly $\dot{H}^s_r = \dot{H}^{rs}$ and Assumptions~\ref{ass:reg}\ref{ass:derivative-bounded}-\ref{ass:second-derivative-bounded} may be derived from this fact. The condition on $\eta$ implies the restriction $r > d/4$. Similarly, if we define an operator $A_\eta$ by specifying its eigenvalues on the eigenbasis $(e_j)_{j=1}^\infty$ of $A$ to be $(j^\nu)_{j=1}^\infty$, then we must take $\nu \in (d/2,2]$.

	In principle, we can consider much more general self-adjoint elliptic operators and fractional powers thereof, as long as the coefficients are sufficiently smooth. We only require that a norm equivalence~\eqref{eq:sobolev_dot_equivalence} in terms of fractional Sobolev spaces and the eigenvalue decay condition above are fulfilled. Other types of boundary conditions are also possible. We refer to \cite[Chapter~16]{Y10} for further details.
\end{example}

\begin{remark}
	\label{rem:remark-G}
	The multiplicative noise operator $G$ is here assumed to take on a particular form. It is possible to generalize the analysis to finite linear combinations of operators of the type we consider. It is even possible to treat nonlinear operators of the form
	\begin{equation*}
		G(u)v = \sum_{j = 1}^\infty  \zeta_j(\langle u, e_{j+m}\rangle) \langle v, e_j \rangle e_j,
	\end{equation*}
	provided that the functions $(\zeta_j)_{j=1}^\infty $ are sufficiently smooth and map $0$ to $0$. However, it is key that $G$ acts on the spectrum of $A$. With regards to the previous example, we cannot treat general composition operators of the form
	$(G(u)v)(x) = g(u(x)) v(x)$ for some function $g \colon \R \to \R$. We return to this point in Remark~\ref{rem:remark-G-nemytskij}.
\end{remark}

Note that $-A$ generates an analytic semigroup $S=(S(t))_{t\ge 0}$  of linear operators on $H$, such that $S(t) e_j = e^{-\lambda_j t} e_j$ for all $t \ge 0$ and $j \in \N$. This is used to define the mild solution of~\eqref{eq:spde} by
\begin{align*}
	X(t) &:= S(t) X_0 + \int^t_0 S(t-s) F(X(s)) \dd s + \int^t_0 S(t-s) \big(I + G X(s) \big) \dd W(s), \quad t \in [0,T].
\end{align*}
Under Assumption~\ref{ass:reg}, existence and uniqueness of $X$ follows from a fixed point argument as in \cite[Theorem~7.2]{DPZ14}.

\subsection{The exponential integrator method}

Consider a uniform time step $\tau \in (0,1]$ satisfying $T/\tau := M \in \N$ and let $t_j := j \tau$ for $j \in \N$. Let $P_N \in \cL(H)$ denote the projection onto the span of $e_1, \ldots, e_N$, $N \in \N$. It is uniformly bounded in $\cL(H)$ and commutes with $A$ and $G$; we use these facts without making explicit reference to them.

The exponential integrator approximation $X^M_N$ of~\eqref{eq:spde} is now given by $X^M_N(0) := P_N X_0$ and for $j =0, \ldots, M-1$ by
\begin{equation}
	\label{eq:exp-int-def}
	\begin{split}
	X^M_N(t_{j+1}) &:= S(\tau) X^M_N(t_{j}) + \int^{t_{j+1}}_{t_j} S(t_{j+1}-s) P_N F(X^M_N(t_{j})) \dd s\\
	&\quad + \int^{t_{j+1}}_{t_j} S(t_{j+1}-s)\big(P_N + G X^M_N(t_{j})\big)\dd W(s).
\end{split}
\end{equation}
Note that
\begin{align*}
	&\Big\langle \int^{t_{j+1}}_{t_j} S(t_{j+1}-s)\big(P_N + G X^M_N(t_{j})\big)\dd W(s), e_k \Big\rangle \\
	&\quad= \mu_k^{\frac 1 2}(1 + \zeta_k \langle X^M_N(t_{j}), e_{k+m} \rangle) \int^{t_{j+1}}_{t_j} e^{-\lambda_k(t_{j+1}-s)} \dd B_k(s)
\end{align*}
if $k \le N$ and $0$ otherwise, meaning the stochastic term in \eqref{eq:exp-int-def} can be sampled exactly. With $\lfloor t \rfloor_{\tau} = t_j$ for $t \in [t_j, t_{j+1})$, we extend the approximation to a continuous process by
\begin{align*}
	X_N^M(t) &:= S(t) P_N X_0 + \int^t_0 S(t-s) P_N F(X^M_N(\lfloor s \rfloor_{\tau})) \dd s \\
	&\quad+ \int^t_0 S(t-s) \big(P_N + G X^M_N(\lfloor s \rfloor_{\tau}) \big) \dd W(s), \quad t \in [0,T].
\end{align*}
Note the dependence of $\tau$ on $M$ in $\lfloor s \rfloor_{\tau} = \lfloor s \rfloor_{M^{-1}}$.

\subsection{The multi-index Monte Carlo method}\label{subsec:mimc-description}
In this section, we present a multi-index Monte Carlo method for
approximations of $\E[\Psi(X)]$, where $X$ is a mild solution of the
SPDE~\eqref{eq:spde} and $\Psi(X)$ denotes a quantity of interest (QoI). For a
real separable Hilbert space $U$ and a mapping $\psi: H \to U$ that is further
described in Assumption~\ref{ass:qoi} below, we consider two kinds of QoIs, the first are mappings $\Psi:L^{2}([0,T], L^{2}(\Omega, H)) \to L^{2}(\Omega, U)$ defined by
\begin{equation}\label{eq:QoI_def1}
  \Psi(X) = \int_0^T \psi(X(s)) \dd s,
\end{equation}
and $\Psi: L^2(\Omega,H) \to L^{2}(\Omega, U)$ defined by
\begin{equation}\label{eq:QoI_def2}
  \Psi(X) = \psi(X(T))\,.
\end{equation}
Although $\Psi(X(T))$ would have been a more appropriate notation
for the latter mapping, we will for the sake of a streamlined
presentation employ the same notation $\Psi(X)$ for both types of
QoIs.

Since the main motivation for hierarchical Monte Carlo methods is
computational efficiency, a cost and error analysis is also included,
where we define the computational cost of an action as the number of
arithmetic operations (additions, subtractions, multiplications and
divisions) and draws of random variables the action requires. We
impose the following constraints on the regularity of the QoI and the
computational cost of QoIs and numerical solutions of the SPDE:
\begin{assumption}\label{ass:qoi}
  Let $\psi:H \to U$ be a twice Gateaux differentiable mapping
  and let there exists a constant $C < \infty$ such that
  \begin{enumerate}[label=(\roman*)]
  	\item \label{ass:qoi:derivative-bounded} $\|\psi'(u)v\|_U\le C\|v\|$ for all $u,v \in H$ and
  	\item \label{ass:qoi:second-derivative-bounded} $\|\psi''(u)(v_1,v_2)\|_U \le C \| v_1 \| \|v_2\|$ for all $u, v_1, v_2 \in H$.
          \item The QoI $\Psi$ is either defined by~\eqref{eq:QoI_def1} or by~\eqref{eq:QoI_def2}.

  \end{enumerate}

  For any integers $M,N \ge 0$ some \(\mathfrak{l} \in \lbrace 0,1 \rbrace\), the computational cost of
  sampling the SPDE solution, \(X_{M}^{N}\) and evaluating the QoI is bounded
  by
  \begin{enumerate}[label=(\roman*),resume]
  \item \label{eq:cost-spde} $\cost(\Psi(X^M_N)) \le C M N (\log_2(N))^{\mathfrak{l}}$.
  \end{enumerate}
\end{assumption}

\begin{example}
	Clearly any linear mapping $\psi \in \cL(H)$, such as $\psi = I$, satisfies Assumption~\ref{ass:qoi} for \(U = H\). Moreover, if $U = \R$ in the context of Example~\ref{ex:heat}, we may define a mapping $\psi \colon H \to \R$ by
	\begin{equation*}
		\psi(u) = \int_{\cD} \varphi(u(x)) \dd x.
	\end{equation*}
	If $\varphi \colon \R \to \R$ is twice differentiable function with bounded first and second derivatives, its first and second derivative are given by
	\begin{align*}
		\psi'(u) v_1 = \int_{\cD} \varphi'(u(x))v_1(x) \dd x  \text{ and } \psi''(u) (v_1,v_2) = \int_{\cD} \varphi''(u(x))v_1(x) v_2(x) \dd x, \hspace{0.5em} u,v_1,v_2 \in H,
	\end{align*}
        and the resulting QoI, $\Psi$, satisfies Assumption~\ref{ass:qoi}.
\end{example}

Assuming the cost of sampling \(X_{N}^{M}\) is the dominant one,
sufficient conditions for the exponential integrator method satisfying
Assumption~\ref{ass:qoi}\ref{eq:cost-spde} is described
in~\cite[Assumption 1]{CHJZ22}. We allow \(\mathfrak{l}=1\) or \(0\)
when the approximation of \(X^M_N\) requires an FFT computation or
not, respectively.

For a given $\varepsilon \in (0,1)$ we seek to approximate $\E[\Psi(X)]$ with $\cO(\varepsilon^2)$ mean square error.
For an index set $\cI \subset \N_0^2$ and set of integers
$(m_{\boldEll})_{\boldEll \in \cI} \subset \N$,
and for two exponentially increasing sequences of integers $(M_k)_{k\ge0}$ and $(N_k)_{k\ge 0}$,
the MIMCEI method is defined as
\begin{equation}\label{eq:mimcei}
\MIMCest := \sum_{\boldEll \in \cI} \sum_{i=1}^{m_\boldEll} \frac{\Delta_{\boldEll} \Psi(X)^{(\boldEll, i)}}{m_\boldEll}.
\end{equation}
Here,
\begin{equation}\label{eq:mimc-diff-def}
\Delta_\boldEll \Psi(X) = \begin{cases}
  \Psi(X_{N_{\ell_2}}^{M_{\ell_1}})  - \Psi(X_{N_{\ell_2}}^{M_{\ell_1-1}}) - \Psi(X_{N_{\ell_2-1}}^{M_{\ell_1}})
  + \Psi(X_{N_{\ell_2-1}}^{M_{\ell_1-1}}) & \text{if} \quad \boldEll \in \N^2\\
  \Psi(X_{N_{\ell_2}}^{M_{\ell_1}})  - \Psi(X_{N_{\ell_2-1}}^{M_{\ell_1}})  & \text{if} \quad \ell_1 =0, \ell_2>0 \\
  \Psi(X_{N_{\ell_2}}^{M_{\ell_1}})  - \Psi(X_{N_{\ell_2}}^{M_{\ell_1-1}})  & \text{if} \quad \ell_2 =0, \ell_1>0 \\
  \Psi(X_{N_{\ell_2}}^{M_{\ell_1}})     & \text{if} \quad \ell_1=\ell_2 =0
  \end{cases}
\end{equation}
with $\boldEll= (\ell_1,\ell_2)$ and $\Delta_\boldEll \Psi(X)^{(\boldEll,i)}$ are
independent $U$-valued random variables for all $i=1,2, \ldots, m_\boldEll$ and
$\boldEll \in \cI$. The index set will take the shape of right-angled triangle
that is described in the proof of Lemma~\ref{lem:general-mimc-cost-error}. It
must in particular satisfy the telescoping-sum constraint that if $(\ell_1, \ell_2)\in \cI$, then the nearest indices to the west, south and southwest also be
contained in $\cI$, i.e.,
\begin{equation}\label{eq:telescoping-constraint}
(\ell_1, \ell_2) \in \cI \implies \{(\max(\ell_1-1,0), \ell_2), (\ell_1, \max(\ell_2-1,0)), (\max(\ell_1-1,0), \max(\ell_2-1,0))\} \subset \cI.
\end{equation}
This is crucial for the weak convergence of MIMCEI, cf.~\eqref{eq:exp-bound-mimc}.

The next lemma and corollary show that double differences of numerical
solutions of the exponential integrator method in many settings satisfy a
multiplicative convergence property.

\begin{lemma}\label{lem:strong-error-coupling}
  If Assumption~\ref{ass:reg}, for some $\kappa \in (0, 2), \nu>0$, and
  Assumption~\ref{ass:qoi} hold, then there exists a constant $C< \infty$ such that
  the exponential integrator method satisfies the following inequality for all
  integers $\overline{M} \ge M$ and $K \ge N$, where $K = DN$ for some $D \in \N$:
 \begin{equation}
 	\label{eq:strong-error-coupling}
   \begin{split}
    \|\Psi(X_K^{\overline{M}})  - \Psi(X_N^{\overline{M}})  - &\Psi(X_K^M) + \Psi(X_N^M) \|_{L^2(\Omega, U)}^2\\
    & \quad  \le \begin{cases}
      {C \min(M^{-2\kappa},\lambda_N^{-\kappa}M^{-\kappa})}  & \text{if} \quad \kappa \in (0,1/2)\\
      C \min\Big( \lambda_{N}^{-(2 \kappa -1)} M^{-1},\, {\lambda_N^{-\kappa}M^{-\kappa}} \Big)  & \text{if} \quad \kappa \in [1/2,1)\\
      C \lambda_{N}^{-\kappa} M^{-1} & \text{if} \quad \kappa \in [1,2)\,.
    \end{cases}
    \end{split}
  \end{equation}
  and for some constant \(C_{N} < \infty\) depending on \(N\),
  \begin{equation}\label{eq:strong-error-coupling-single-lvl}
   \begin{split}
     \|\Psi(X_K^{M})  - \Psi(X_N^{M}) \|_{L^2(\Omega, U)}^2  &\le C \lambda_{N}^{-\kappa},\\
     \|\Psi(X_N^{M})  - \Psi(X_N^{\overline{M}}) \|_{L^2(\Omega, U)}^2  &\le \min( C_{N} {M^{-1}}, \, C M^{-\min(\kappa,1)} ) \,  .\\
    \end{split}
  \end{equation}
  If additionally $\psi \in \cL(H,U)$, then it also holds that
  \begin{equation}\label{eq:strong-error-coupling2}
    \|\Psi(X_K^{\overline{M}})  - \Psi(X_N^{\overline{M}})  - \Psi(X_K^M) + \Psi(X_N^M) \|_{L^2(\Omega, U)}^2
    \le C \lambda_N^{-2\kappa}.
\end{equation}
\end{lemma}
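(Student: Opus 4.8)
The plan is to establish the four-term (double-difference) and the single-level estimates by reducing everything to strong-error and regularity estimates for the exponential integrator, separating the spatial discretization (governed by $N$, $K$) from the temporal one (governed by $M$, $\overline M$). Before attacking the double differences I would first record the two basic one-parameter building blocks: a \emph{spatial} estimate controlling $\|X_K^{M}-X_N^{M}\|$ in the relevant norms, and a \emph{temporal} estimate controlling $\|X_N^{M}-X_N^{\overline M}\|$. For the QoI, the mean value theorem together with Assumption~\ref{ass:qoi}\ref{ass:qoi:derivative-bounded} (and the linear-growth/Lipschitz facts in~\eqref{eq:F-lip-lin-growth}) lets me pass from differences of $\Psi$ to differences of the underlying processes $X$ in $L^2(\Omega,H)$, at the cost of a bounded factor; for the first type of QoI~\eqref{eq:QoI_def1} this is just an integration in time. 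Thus the single-level bounds~\eqref{eq:strong-error-coupling-single-lvl} follow from the corresponding bounds on $\|X_K^{M}-X_N^{M}\|_{L^2(\Omega,H)}$ and $\|X_N^{M}-X_N^{\overline M}\|_{L^2(\Omega,H)}$. The spatial bound should give the $\lambda_N^{-\kappa}$ rate from the $\dot H^\kappa$-regularity of $X_0$ and the noise (Assumption~\ref{ass:reg}\ref{ass:X0},\ref{ass:Q}), since $\|(I-P_N)v\|\lesssim\lambda_N^{-\kappa/2}\|v\|_{\dot H^\kappa}$; the temporal bound yields the $M^{-\min(\kappa,1)}$ rate, with the $N$-dependent $C_N M^{-1}$ arising because on a fixed spatial resolution the problem is genuinely smooth in time and the integrator is first order.

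For the \emph{double difference}~\eqref{eq:strong-error-coupling}, the heart of the matter is the \emph{multiplicative} (mixed) decay: the four-term combination must decay simultaneously in $N$ and $M$, faster than either single difference. The natural route is to write the double difference as a difference of spatial increments taken at two temporal resolutions,
\begin{equation*}
\big(\Psi(X_K^{\overline M})-\Psi(X_N^{\overline M})\big)-\big(\Psi(X_K^{M})-\Psi(X_N^{M})\big),
\end{equation*}
and to show that the \emph{spatial} increment $X_K^{m}-X_N^{m}$ changes by a controlled amount as the temporal resolution passes from $M$ to $\overline M$. Concretely I would introduce the error process $E^{m}:=X_K^{m}-X_N^{m}$, derive the recursion it satisfies from~\eqref{eq:exp-int-def}, and then estimate the \emph{temporal} increment $E^{\overline M}-E^{M}$ of this error process. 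The key structural fact to exploit is that $E^{m}$ lives on the high-frequency part of the spectrum (indices between $N$ and $K$, where the projections $P_N$ and $P_K$ differ), so the smoothing of the semigroup $S$ on these modes, $\|A^{\rho}S(t)\|_{\cL(H)}\lesssim t^{-\rho}$ and the spectral gap $\lambda_N\eqsim N^{\nu}$, provides the extra $\lambda_N^{-\kappa}$ or $\lambda_N^{-(2\kappa-1)}$ factor that multiplies the temporal rate. This is exactly the mechanism producing the products $\lambda_N^{-\kappa}M^{-\kappa}$ and $\lambda_N^{-(2\kappa-1)}M^{-1}$, and the $\min$ with $M^{-2\kappa}$ or $\lambda_N^{-\kappa}M^{-1}$ reflects that one can alternatively bound each pair of differences crudely and take whichever is smaller in a given regime.

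The \textbf{main obstacle} I anticipate is the handling of the multiplicative noise term $G X^{m}(\lfloor s\rfloor_\tau)$ in the error recursion. Because $G$ is affine in the solution via~\eqref{eq:G-form}, the stochastic integrand in the equation for $E^{m}$ contains both a genuinely high-frequency piece and a feedback term coupling $E^{m}$ to the full solution; controlling the latter requires the $\dot H^{\kappa+\delta-1}$-mapping property of $G$ (the first half of Assumption~\ref{ass:reg}\ref{ass:Q}) and an Itô isometry combined with the parabolic smoothing to recover the sharp half-order loss visible in the $\lambda_N^{-(2\kappa-1)}M^{-1}$ term. A discrete Grönwall argument then closes the recursion. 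Finally, when $\psi\in\cL(H,U)$ is \emph{linear}, the QoI commutes with the four-term combination and no mean-value linearization is needed, so one may estimate $\|E^{\overline M}-E^{M}\|$ directly at the level of $X$; in this fully linear QoI setting the temporal increment of the high-frequency error gains a \emph{second} factor of the spatial rate, yielding the clean $\lambda_N^{-2\kappa}$ bound~\eqref{eq:strong-error-coupling2}, where the improvement over~\eqref{eq:strong-error-coupling} comes precisely from not paying the temporal price that the nonlinear QoI forces through its second derivative (Assumption~\ref{ass:qoi}\ref{ass:qoi:second-derivative-bounded}).
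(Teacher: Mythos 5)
Your overall strategy mirrors the paper's: the single-level bounds \eqref{eq:strong-error-coupling-single-lvl} come from one-parameter spatial and temporal error estimates for the exponential integrator (the paper's Lemma~\ref{lem:appr}\ref{lem:appr:space-error} and a variant of Lemma~\ref{lem:appr}\ref{lem:appr:time-error}, the latter supplying the $N$-dependent constant $C_N$), and the mixed bound rests on an estimate of $E^{\overline M}-E^{M}$ with $E^{m}:=X_K^{m}-X_N^{m}$, which is exactly the quantity $\cE^{\overline M,M}_{K,N}$ of \eqref{eq:second-order} that Theorem~\ref{thm:second-order-diff} controls through high-frequency localization, semigroup smoothing, the mapping properties of $G$, and a Gronwall argument.

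There is, however, one step that would fail as written: for the double difference of a \emph{nonlinear} $\psi$, the first-order mean value theorem together with Assumption~\ref{ass:qoi}\ref{ass:qoi:derivative-bounded} does \emph{not} let you pass from differences of $\Psi$ to differences of $X$ at the cost of a bounded factor; applying the Lipschitz bound pairwise only recovers single-difference rates and destroys the multiplicative structure. The paper instead performs a second-order expansion of $\psi$, schematically
\[
\int_0^1 \psi'(\cdot)\,\cE^{\overline M,M}_{K,N}(T)\,\dd\lambda
\;+\;\int_0^1\!\!\int_0^1 \psi''(\cdot)\big(X_N^{\overline M}(T)-X_N^{M}(T),\,X_K^{M}(T)-X_N^{M}(T)\big)\,\dd\lambda\,\dd\tilde\lambda,
\]
so that, besides the $\cE$-term, a cross term appears which is bounded via Assumption~\ref{ass:qoi}\ref{ass:qoi:second-derivative-bounded} and H\"older's inequality by $\|X_N^{\overline M}(T)-X_N^{M}(T)\|_{L^4(\Omega,H)}\,\|X_K^{M}(T)-X_N^{M}(T)\|_{L^4(\Omega,H)}$. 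This cross term is not lower order: after squaring it contributes exactly the rates appearing in \eqref{eq:strong-error-coupling} (e.g.\ $\lambda_N^{-\kappa}M^{-1}$ for $\kappa\in[1,2)$ and $\lambda_N^{-\kappa}M^{-\kappa}$ for $\kappa\in(0,1/2)$), and its presence is precisely what blocks the improved bound \eqref{eq:strong-error-coupling2} when $\psi$ is nonlinear. You acknowledge the role of $\psi''$ only in your closing sentence; the decomposition itself, and the $L^4(\Omega,H)$ estimates on the two first differences it demands, must be an explicit part of the argument. For the time-integrated QoI \eqref{eq:QoI_def1} the same expansion is applied pointwise in $t$ after Jensen's inequality, which additionally requires extending the double-difference estimate of Theorem~\ref{thm:second-order-diff} from grid points to all $t\in[0,T]$ — a point your proposal does not address.
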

\begin{proof}
See Appendix~\ref{sec:mimc-proofs}.
\end{proof}

The following corollary, which is a direct consequence of
Lemma~\ref{lem:strong-error-coupling} and Jensen's inequality, bounds the
\(L^{2}(\Omega, U)\) norm and the expectation of \(\Delta_{\boldEll} \Psi(X)\). These
bounds will be used in the following theorem.
\begin{corollary}[Mixed difference convergence]\label{coro:mix-diff}
  Let Assumption~\ref{ass:reg}, for some $\kappa \in (0, 2), \nu>0$, and
  Assumption~\ref{ass:qoi} hold and let $M_k \eqsim N_k \eqsim 2^k$. Then
  there exist a constants $C< \infty$ such that for all \(\boldEll \in \N_{0}^{2}\),
  \begin{equation}\label{eq:strong-error-coupling-delta}
    \E[\|\Delta_{\boldEll} \Psi(X) \|_{U}^{2}]  \le
    C 2^{-\min(\kappa, 1)\,\ell_{1}- \kappa \nu\, \ell_{2}} \begin{cases}
      \min(2^{-\kappa (\ell_{1}-\nu \ell_{2})}, %
      1) & \kappa \in (0, 1/2)\\
      \min(2^{-(1-\kappa) (\ell_{1}-\nu \ell_{2})}, %
      1) & \kappa \in [1/2, 1) \\
      1 & \kappa \in [1, 2), \\%
    \end{cases}
  \end{equation}
  and, for \(\alpha_{1} \geq \min(\kappa,1)/2, \alpha_{2} \geq \kappa\nu/2\),
  \begin{equation}\label{eq:weak-error-coupling-delta}
    \|\E[\Delta_{\boldEll} \Psi (X)] \|_{U}  \le
    C 2^{-\alpha_{1}\,\ell_{1}- \alpha_{2} \nu\, \ell_{2}}
    \begin{cases}
      \min(2^{-\kappa (\ell_{1}-\nu \ell_{2})/2}, %
      1) & \kappa \in (0, 1/2)\\
      \min(2^{-(1-\kappa) (\ell_{1}-\nu \ell_{2})/2}, %
      1) & \kappa \in [1/2, 1) \\
      1 & \kappa \in [1, 2), \\%
    \end{cases}
  \end{equation}
\end{corollary}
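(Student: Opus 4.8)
The plan is to treat the corollary as a bookkeeping consequence of Lemma~\ref{lem:strong-error-coupling}, the scaling $M_k \eqsim N_k \eqsim 2^k$, and Jensen's inequality. The first step is to record the dyadic scalings that feed into the lemma: with $M_k \eqsim N_k \eqsim 2^k$ and Assumption~\ref{ass:reg}\ref{ass:eigenvalue-growth} giving $\lambda_{N_k} \eqsim N_k^{\nu} \eqsim 2^{\nu k}$, the coarse resolutions entering a mixed difference at index $\boldEll = (\ell_1,\ell_2) \in \N^2$ are $M_{\ell_1-1} \eqsim 2^{\ell_1}$ and $\lambda_{N_{\ell_2-1}} \eqsim 2^{\nu \ell_2}$, the constant factors $2^{-1}$ and $2^{-\nu}$ being absorbed into the generic $C$. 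I would also note that for the dyadic sequence $N_k = 2^k$ the hypothesis $K = DN$ of the lemma holds with $D = 2 \in \N$ (more generally one arranges the sequence so consecutive terms are integer multiples), so the lemma applies directly to each four-point difference.

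For the interior indices $\boldEll \in \N^2$, I would identify the difference $\Delta_\boldEll \Psi(X)$ in~\eqref{eq:mimc-diff-def} with the coupled quantity on the left of~\eqref{eq:strong-error-coupling} under the substitution $K = N_{\ell_2}$, $N = N_{\ell_2-1}$, $\overline{M} = M_{\ell_1}$, $M = M_{\ell_1-1}$, and then insert the scalings above into the right-hand side. The only real work is algebraic: in each of the three $\kappa$-regimes one rewrites the minimum by pulling out the common factor $2^{-\min(\kappa,1)\ell_1 - \kappa\nu\ell_2}$. For $\kappa \in (0,1/2)$ one checks $\min(M^{-2\kappa}, \lambda_N^{-\kappa}M^{-\kappa}) \eqsim 2^{-\kappa\ell_1 - \kappa\nu\ell_2}\min(2^{-\kappa(\ell_1-\nu\ell_2)},1)$; for $\kappa \in [1/2,1)$ one checks that $\lambda_N^{-(2\kappa-1)}M^{-1} \le \lambda_N^{-\kappa}M^{-\kappa}$ precisely when $\ell_1 \ge \nu\ell_2$ (the ratio is $2^{(1-\kappa)(\nu\ell_2-\ell_1)}$), which turns the minimum into $2^{-\kappa\ell_1-\kappa\nu\ell_2}\min(2^{-(1-\kappa)(\ell_1-\nu\ell_2)},1)$; and for $\kappa \in [1,2)$ the single term $\lambda_N^{-\kappa}M^{-1}$ already equals $2^{-\ell_1-\kappa\nu\ell_2}$. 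This reproduces~\eqref{eq:strong-error-coupling-delta}.

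The boundary indices require the single-level estimates~\eqref{eq:strong-error-coupling-single-lvl}. For $\ell_1 = 0, \ell_2 > 0$ the difference reduces to $\Psi(X_{N_{\ell_2}}^{M_0}) - \Psi(X_{N_{\ell_2-1}}^{M_0})$, so the first line of~\eqref{eq:strong-error-coupling-single-lvl} gives $\le C\lambda_{N_{\ell_2-1}}^{-\kappa} \eqsim 2^{-\kappa\nu\ell_2}$, matching~\eqref{eq:strong-error-coupling-delta} since the case factor collapses to $1$ when $\ell_1 - \nu\ell_2 < 0$; and $\ell_1 = \ell_2 = 0$ is just the finite second moment $\E[\|\Psi(X_{N_0}^{M_0})\|_U^2] \le C$. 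I expect the one genuinely delicate point to be $\ell_2 = 0, \ell_1 > 0$: here the $N$-independent rate $C M^{-\min(\kappa,1)}$ from the second line of~\eqref{eq:strong-error-coupling-single-lvl} is too weak to reach the target $2^{-2\kappa\ell_1}$ (for $\kappa<1/2$) or $2^{-\ell_1}$ (for $\kappa\in[1/2,1)$), and one must instead invoke the $N$-dependent bound $C_N M^{-1}$ with $N$ fixed at $N_0$. Since $N_0$ is fixed, $C_{N_0}$ is a legitimate $\boldEll$-independent constant, and $C_{N_0} M_{\ell_1-1}^{-1} \eqsim 2^{-\ell_1}$ is bounded by the required $C\, 2^{-\min(\kappa,1)\ell_1}\min(\cdot,1)$.

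Finally, the weak bound~\eqref{eq:weak-error-coupling-delta} follows from the strong bound by Jensen's inequality, $\|\E[\Delta_\boldEll \Psi(X)]\|_U \le (\E[\|\Delta_\boldEll \Psi(X)\|_U^2])^{1/2}$: taking the square root of~\eqref{eq:strong-error-coupling-delta} halves every exponent, producing the prefactor rate $\min(\kappa,1)/2$ in $\ell_1$ and $\kappa\nu/2$ in $\ell_2$, together with the square-rooted case factors appearing in~\eqref{eq:weak-error-coupling-delta} (note $\sqrt{\min(2^{-x},1)} = \min(2^{-x/2},1)$). Because the corollary is, as advertised, a direct consequence of the lemma and Jensen's inequality, there is no deep obstacle; the care lies entirely in the three-way case split and, above all, in recognizing that the $\ell_2 = 0$ boundary must be controlled through the $N$-fixed temporal estimate rather than the $N$-uniform one.
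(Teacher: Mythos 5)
Your proposal is correct and follows exactly the route the paper intends: the corollary is stated there as a direct consequence of Lemma~\ref{lem:strong-error-coupling} and Jensen's inequality, and your case-by-case algebra, the use of the $N$-fixed constant $C_{N_0}$ on the $\ell_2=0$ axis, and the square-root step for the weak bound supply precisely the omitted details. (The only discrepancy is the paper's own prefactor $2^{-\alpha_2\nu\ell_2}$ with $\alpha_2\ge\kappa\nu/2$, which is inconsistent with the $2^{-\kappa\nu\ell_2/2}$ that Jensen actually yields; your reading is the correct one and this is a typo in the statement, not a gap in your argument.)
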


We now present the main results on the performance of the MIMCEI method.
\begin{theorem}\label{thm:mimc-cost-error}
  Let Assumption~\ref{ass:reg}, for some $\kappa \in (0, 2), \nu>0$, and
  Assumption~\ref{ass:qoi} hold, let $M_k \eqsim N_k \eqsim 2^k$ and recall
  the parameters $\alpha_1, \alpha_{2} \ge 0$ from \eqref{eq:weak-error-coupling-delta}. Then,
  for any $\varepsilon \in (0,1)$, there exist MIMCEI parameters $\cI$ and $(m_{\ell})_{\ell \in
    \cI}$ that depend on $\varepsilon$, %
  such that \( \|\MIMCest - \E[\Psi(X)]\|_{L^2(\Omega, U)}^2 = \cO(\varepsilon^2) \) and
  when \(\kappa \in [1,2)\), we have
  \[
    \cost(\MIMCest) %
    = \begin{cases}
      \cO(\varepsilon^{-2} \, \lvert \log(\varepsilon^{-1}) \rvert^{2}) & \kappa \nu > 1, \\
      \cO(\varepsilon^{-2} \, \lvert \log(\varepsilon^{-1}) \rvert^{4 + \mathfrak l}) & \kappa \nu = 1, \\
      \cO(\varepsilon^{-2-\frac{1- \kappa\nu}{\alpha_{2}}} \, \lvert \log(\varepsilon^{-1}) \rvert^{\mathfrak l}) & \kappa \nu < 1. \\
    \end{cases}
  \]
  Moreover, when \(\kappa \in (0,1)\), then \(\cost(\MIMCest) = \cO(\varepsilon^{-2-2u} \,
  \lvert \log(\varepsilon^{-1}) \rvert^{4 + \mathfrak l + 2u})\), where
  \[
    2 u = \max\left (0, \frac{1-\kappa}{\alpha_{1}}, \frac{1- \kappa\nu}{\alpha_{2}}
    \right).
  \]
\end{theorem}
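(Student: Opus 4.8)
The plan is to run the standard multi-index cost--error programme on the two ingredients already in hand: the mixed-difference bounds of Corollary~\ref{coro:mix-diff} and the per-sample work $W_{\boldEll} := \cost(\Psi(X^{M_{\ell_1}}_{N_{\ell_2}})) \eqsim 2^{\ell_1+\ell_2}\,\ell_2^{\mathfrak{l}}$ supplied by Assumption~\ref{ass:qoi}\ref{eq:cost-spde} with $M_k \eqsim N_k \eqsim 2^k$. First I would split the mean square error. Because the summands in~\eqref{eq:mimcei} are independent and the telescoping constraint~\eqref{eq:telescoping-constraint} yields $\E[\MIMCest] = \sum_{\boldEll \in \cI} \E[\Delta_{\boldEll}\Psi(X)]$, the error decomposes as squared bias plus variance, with bias equal to the truncated tail $\sum_{\boldEll \notin \cI}\E[\Delta_{\boldEll}\Psi(X)]$ and variance at most $\sum_{\boldEll \in \cI} m_{\boldEll}^{-1} V_{\boldEll}$, where $V_{\boldEll} := \E[\|\Delta_{\boldEll}\Psi(X)\|_U^2]$. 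Here I rely on $\Psi(X^{M_{\ell_1}}_{N_{\ell_2}}) \to \Psi(X)$ in $L^2(\Omega,U)$ as $\ell_1,\ell_2 \to \infty$, which the single-level estimates of Lemma~\ref{lem:strong-error-coupling} provide, so that the infinite telescoping sum genuinely converges to $\E[\Psi(X)]$ and the bias is the stated tail. It then suffices to force each of the two contributions to be $\cO(\varepsilon^2)$.

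For the bias I would take $\cI$ to be the right-angled triangle obtained by thresholding the weak-error weights of~\eqref{eq:weak-error-coupling-delta} at a level $L$; this respects~\eqref{eq:telescoping-constraint}, and summing the product-geometric weak bound over its complement collapses to $\eqsim L\,2^{-L}$, so $L \eqsim \log_2(\varepsilon^{-1})$ drives the bias below $C\varepsilon$. With $\cI$ fixed I would allocate samples by minimizing $\sum_{\boldEll \in \cI} m_{\boldEll} W_{\boldEll}$ subject to $\sum_{\boldEll \in \cI} m_{\boldEll}^{-1} V_{\boldEll} \lesssim \varepsilon^2$; the Lagrange-optimal choice $m_{\boldEll} \propto \varepsilon^{-2} \sqrt{V_{\boldEll}/W_{\boldEll}} \sum_{\boldEll' \in \cI}\sqrt{V_{\boldEll'} W_{\boldEll'}}$, rounded up to a positive integer, gives
\[
  \cost(\MIMCest) \lesssim \varepsilon^{-2}\Big(\sum_{\boldEll \in \cI}\sqrt{V_{\boldEll} W_{\boldEll}}\Big)^{2} + \sum_{\boldEll \in \cI} W_{\boldEll}.
\]
I expect it cleanest to isolate this abstract reduction as the general cost--error Lemma~\ref{lem:general-mimc-cost-error}, whose proof also pins down the triangle, and then merely substitute the rates; both terms above are subsequently estimated regime by regime.

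The crux is estimating $\sum_{\boldEll \in \cI}\sqrt{V_{\boldEll} W_{\boldEll}}$, and this is where the case split originates. Bounding the off-diagonal $\min$-factors by $1$ (they can only decrease $V_{\boldEll}$, hence enlarge the cost bound) and tracking the work log-factor separately, the summand scales like $2^{(1-\min(\kappa,1))\ell_1/2}\,2^{(1-\kappa\nu)\ell_2/2}\,\ell_2^{\mathfrak{l}/2}$, so the sign of each geometric exponent decides the behaviour. When $\kappa \in [1,2)$ the $\ell_1$-exponent vanishes, the $\ell_1$-sum over the triangle contributes only an $\cO(L)$ factor, and the $\ell_2$-direction is governed by $1-\kappa\nu$: a convergent geometric sum for $\kappa\nu>1$ (which absorbs the work log-factor and leaves $\cost \eqsim \varepsilon^{-2}\lvert\log(\varepsilon^{-1})\rvert^{2}$ from the $\ell_1$-count squared), a flat sum for $\kappa\nu=1$ (the two-dimensional count squared giving $\lvert\log(\varepsilon^{-1})\rvert^{4}$ and the work log-factor adding $\lvert\log(\varepsilon^{-1})\rvert^{\mathfrak{l}}$), and a geometric blow-up localized at the $\ell_2$-corner for $\kappa\nu<1$, which via $2^{L}\eqsim\varepsilon^{-1}$ becomes the rate $\varepsilon^{-2-(1-\kappa\nu)/\alpha_2}\lvert\log(\varepsilon^{-1})\rvert^{\mathfrak{l}}$. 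When $\kappa \in (0,1)$ the $\ell_1$-exponent $(1-\kappa)/2$ is strictly positive, so both corners of the triangle can drive geometric growth; comparing the two corner values against the convergent baseline produces the combined rate $\varepsilon^{-2-2u}$ with $2u = \max(0,(1-\kappa)/\alpha_1,(1-\kappa\nu)/\alpha_2)$, the accompanying $\lvert\log(\varepsilon^{-1})\rvert^{4+\mathfrak{l}+2u}$ prefactor arising from the accumulated counting and work-logarithm factors. The main obstacle is precisely this book-keeping: unifying the anisotropic, regime-dependent rates into a single cost--error lemma, verifying that the integer-rounding skeleton cost $\sum_{\boldEll \in \cI} W_{\boldEll}$ is of lower order in each regime, and tracking every logarithmic factor at the critical thresholds (the boundary $\kappa\nu=1$ and the ties among the three candidates defining $u$) to land exactly on the stated exponents.
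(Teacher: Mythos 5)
Your proposal is correct and follows essentially the same route as the paper: the paper's proof of this theorem is precisely "substitute the rates of Corollary~\ref{coro:mix-diff} into Lemma~\ref{lem:general-mimc-cost-error}", and your outline reconstructs that lemma's content exactly --- the bias/variance split, the weak-rate-weighted triangular index set satisfying~\eqref{eq:telescoping-constraint}, the Lagrange-optimal sample allocation $m_{\boldEll} \propto \varepsilon^{-2}\sqrt{V_{\boldEll}/W_{\boldEll}}\sum\sqrt{V W}$, the two-term cost bound, and the regime-by-regime geometric summation with $\beta_1=\min(\kappa,1)$, $\beta_2=\kappa\nu$. The only detail you gloss over is the extra $\log_2\log_2(\varepsilon^{-1})$ correction to $L$ that the paper uses in the degenerate case to absorb the factor $L$ in the tail bound~\eqref{eq:sum-exp-compliment}, but you flag exactly this kind of logarithmic book-keeping as the remaining work.
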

\begin{proof}
  Given the bounds in Corollary~\ref{coro:mix-diff}, the result follows
  directly from Lemma~\ref{lem:general-mimc-cost-error}, see also \cite[Theorem 2.2]{HNT16}.
\end{proof}

\begin{theorem}\label{thm:mimc-cost-error-extra}
  Under the same setup as Theorem~\ref{thm:mimc-cost-error}, when
  \(\kappa \in (0,1)\), we have the additional bound,
  \(\cost(\MIMCest) = \cO(\varepsilon^{-2 - 2u}\, |\log_2 \varepsilon|^{4+ \mathfrak l+2u }) \),
  where
  \[
    2u = \max\left(0, \frac{1-\min(1, 2\kappa)}{\alpha_{1}+ \kappa/2}, \frac{1-\kappa\nu}{\alpha_{2}},
      \frac{1+\nu (1-2\kappa)} {\alpha_{2} + \nu \alpha_{1}} \right) \leq \frac{1-\kappa}{\alpha_{1}}.
  \]
\end{theorem}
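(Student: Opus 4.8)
\emph{The plan} is to re-run the reduction to the general multi-index cost-error estimate Lemma~\ref{lem:general-mimc-cost-error} that already yields Theorem~\ref{thm:mimc-cost-error}, but to \emph{retain} the mixed $\min(\cdot,1)$ factors in the variance and weak-error bounds of Corollary~\ref{coro:mix-diff} instead of discarding them (bounding them by $1$) as in the proof of Theorem~\ref{thm:mimc-cost-error}. Recall that for $\kappa \in (0,1)$ the optimally allocated MIMCEI cost obeys $\cost(\MIMCest) \lesssim \varepsilon^{-2}\big(\sum_{\boldEll \in \cI} \sqrt{V_{\boldEll} W_{\boldEll}}\big)^2$, where $V_{\boldEll} = \E\|\Delta_{\boldEll}\Psi(X)\|_U^2$, the per-sample work is $W_{\boldEll} \lesssim 2^{\ell_1+\ell_2} \ell_2^{\mathfrak l}$ by Assumption~\ref{ass:qoi}\ref{eq:cost-spde}, and the triangular index set $\cI$ is fixed by the bias budget $\sum_{\boldEll \notin \cI} \|\E[\Delta_{\boldEll}\Psi(X)]\|_U \lesssim \varepsilon$ through the weak bound~\eqref{eq:weak-error-coupling-delta}. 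The refinement is entirely in how these two sums are estimated.

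The first step is to split $\N_0^2$ along the critical line $\ell_1 = \nu \ell_2$ into $R_+ = \{\ell_1 \ge \nu \ell_2\}$ and $R_- = \{\ell_1 < \nu \ell_2\}$, on each of which the bounds of Corollary~\ref{coro:mix-diff} collapse to genuine product rates $2^{\mathbf a \cdot \boldEll}$ and the analysis of \cite[Theorem 2.2]{HNT16} applies directly (equivalently, one may view this as a linear change of variables on each piece). On $R_-$ the $\min$ factors equal $1$, so the variance rate is $(\kappa, \kappa\nu)$ and the weak rate $(\alpha_1,\alpha_2)$, exactly as in Theorem~\ref{thm:mimc-cost-error}. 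On $R_+$ the factors are active: for $\kappa \in (0,1/2)$ the variance decouples to $V_{\boldEll} \lesssim 2^{-2\kappa \ell_1}$ (no $\ell_2$ decay) and the effective $\ell_1$ weak rate improves to $\alpha_1 + \kappa/2$, while for $\kappa \in [1/2,1)$ one gets $V_{\boldEll} \lesssim 2^{-\ell_1 - (2\kappa-1)\nu\ell_2}$, so the $\ell_1$ variance rate saturates the work rate. I would then apply Lemma~\ref{lem:general-mimc-cost-error} separately on $\cI \cap R_+$ and $\cI \cap R_-$ with these product rates and add the two contributions.

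The cost exponent then surfaces as the maximum of the degradation ratios $(\gamma_i - \beta_i)/\alpha_i^{\mathrm{eff}}$ over the extremal directions of $\cI$: the pure-$\ell_1$ direction in $R_+$ gives $\frac{1-\min(1,2\kappa)}{\alpha_1+\kappa/2}$, where the improved weak rate $\alpha_1+\kappa/2$ is essential and the numerator vanishes once $\kappa \ge 1/2$; the pure-$\ell_2$ direction in $R_-$ gives $\frac{1-\kappa\nu}{\alpha_2}$; and the diagonal $\ell_1 = \nu\ell_2$, along which the effective work rate is $1+\nu$, the variance rate $2\kappa\nu$ and the weak rate $\alpha_2 + \nu\alpha_1$, gives $\frac{(1+\nu)-2\kappa\nu}{\alpha_2+\nu\alpha_1} = \frac{1+\nu(1-2\kappa)}{\alpha_2+\nu\alpha_1}$. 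Together with the trivial $0$ these are precisely the four entries of $2u$. The logarithmic factors are tracked as usual: $|\log\varepsilon|^{4+\mathfrak l}$ from the $\cO(|\log\varepsilon|^2)$ cardinality of the triangular $\cI$ and the FFT factor $\mathfrak l$, and the extra $|\log\varepsilon|^{2u}$ from the critical (equality) cases of the geometric sums. The final step is the algebraic comparison establishing $2u \le \frac{1-\kappa}{\alpha_1}$, carried out by checking the first and fourth (the $\ell_1$-related) terms against the coarse $\ell_1$-rate of Theorem~\ref{thm:mimc-cost-error} using the admissible ranges $\alpha_1 \ge \kappa/2$, $\alpha_2 \ge \kappa\nu/2$ and $\kappa,\nu$.

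The main obstacle I anticipate is the non-product, piecewise nature of the rates: the general lemma is phrased for a single product rate, so the region decomposition must be organized so that the two pieces and their shared diagonal boundary are accounted for consistently and without double counting. The genuinely delicate point is $R_+$ with $\kappa \in (0,1/2)$, where the variance loses all $\ell_2$-decay ($\beta_2 = 0 < \gamma_2$); there one must verify that the $\ell_2$-summation over this region is governed by the diagonal contribution (yielding the fourth term) up to at most logarithmic factors rather than by the $\ell_1$-axis, and that the bias budget still pins down the extent of $\cI$ through the improved weak rate $\alpha_1+\kappa/2$. Reconciling this degenerate direction with the standard optimization, and confirming that the four extremal ratios above are exhaustive, is where the real work lies.
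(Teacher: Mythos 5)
Your proposal is correct and follows essentially the same route as the paper: the paper retains the $\min$ factors of Corollary~\ref{coro:mix-diff} by recording the variance bound in the form $2^{-\beta_1\ell_1-\beta_2\ell_2-\vartheta\max(\ell_1-\upsilon\ell_2,0)}$ and proving a dedicated complexity estimate (Lemma~\ref{lem:general-mimc-cost-error-min}) in which the relevant sums over the triangular index set are controlled by maximizing the piecewise-linear exponent over the four extremal points (the origin, the two axis endpoints, and the intersection of the diagonal $\ell_1=\upsilon\ell_2$ with the boundary $\xi_1\ell_1+\xi_2\ell_2=L$), which produces exactly your four ratios with the choice $\xi_i=\alpha_i+(1-\beta_i)/2$. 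Your decomposition into $R_\pm$ along $\ell_1=\nu\ell_2$ is an equivalent way of organizing that vertex maximization (so the ``delicate'' degenerate direction you flag is handled automatically), with the only minor bookkeeping difference being that the extra $|\log_2\varepsilon|^{2u}$ in the paper arises from the $\log_2\log_2$ correction in the choice of $L$ in~\eqref{eq:mimc-L-choice-max-rates} rather than from critical cases of the geometric sums.
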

\begin{proof}
  Given the bounds in Corollary~\ref{coro:mix-diff}, with the improved rates
  with respect to time-discretization, the result follows directly from
  Lemma~\ref{lem:general-mimc-cost-error-min}.
\end{proof}

The MIMCEI method is an extension of the multilevel Monte Carlo method exponential integrator
(MLMCEI) studied in~\cite{CHJZ22,CHLNT21,JKLZ17}
\[
\MLMCest :=  \sum_{\ell =1}^L \sum_{{i_\ell} =1}^{m_\ell} \frac{\Psi(X^{M_\ell}_{N_\ell})^{i_\ell} - \Psi(X^{M_{\ell-1}}_{N_{\ell-1}})^{i_\ell} }{m_\ell} +
\sum_{{i_0} =1}^{m_0} \frac{\Psi(X^{M_0}_{N_0})^{i_0}}{m_0},
\]
where $L\ge0$, $(m_\ell)$ is a positive sequence, and $(M_\ell)$ and $(N_\ell)$ are
exponentially increasing sequences, e.g., $M_k \eqsim N_k \eqsim 2^k$. For the
sake of performance comparison, we next present a cost versus error result for
MLMCEI. The result is in the spirit of~\cite[Theorem 2]{CHJZ22}, but for the
regularity setting considered in this work. 
\begin{theorem}\label{thm:mlmc-cost-error}
  Let Assumption~\ref{ass:reg}, for $\kappa \in (0, 2), \nu>0$, and
  Assumption~\ref{ass:qoi} hold. Then there exist $\alpha_1 \ge \min(1,\kappa)/2$ and $\alpha_2 \ge
  \kappa \nu /2$ and a constant $C< \infty$ such that
  \begin{equation}\label{eq:weak-assumption-mlmc}
    \|\E[\Psi(X_K^{\overline{M}})  - \Psi(X_N^M)] \|_U \le C  (M^{-\alpha_1} + N^{-\alpha_2}),
  \end{equation}
  for all integers $K \ge N$ and $ \overline M \ge M$,
  and for any $\varepsilon \in (0,1)$, there exist MLMCEI parameters
  $L$, $(M_k)$, $(N_k)$ and $(m_{\ell})_{\ell=0}^L$ that depend on $\varepsilon$ such that
  \[
    \|\MLMCest - \E[\Psi(X)]\|_{L^2(\Omega, U)}^2 = \mathcal O(\varepsilon^2)
  \]
  and 
  \[
  \cost(\MLMCest) = \begin{cases}
      \cO\left(\varepsilon^{-2 - \big(1 + \nu(1-\kappa) \big)/\min(\nu \alpha_1, \,\alpha_2)} |\log_2 \varepsilon | \right)
    & \text{if} \quad \kappa \in (0,1)\\
      \cO\left(\varepsilon^{-2 - 1/\min(\kappa \nu \alpha_1, \,\alpha_2)} |\log_2 \varepsilon | \right) & \text{if} \quad \kappa \in [1,2)\, .
  \end{cases}
  \]

\end{theorem}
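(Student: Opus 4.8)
The plan is to obtain the weak-error bound \eqref{eq:weak-assumption-mlmc} as a soft consequence of the already-established strong estimates, and then feed the resulting weak rates, together with the variance rates of Lemma~\ref{lem:strong-error-coupling}, into a standard multilevel complexity analysis in which space and time are refined \emph{anisotropically}. For the weak bound I would split the mixed approximation error into a purely spatial and a purely temporal part,
\[
\Psi(X_K^{\overline M}) - \Psi(X_N^M) = \big(\Psi(X_K^{\overline M}) - \Psi(X_N^{\overline M})\big) + \big(\Psi(X_N^{\overline M}) - \Psi(X_N^M)\big),
\]
take expectations, and apply Jensen's inequality $\|\E[Y]\|_U \le \|Y\|_{L^2(\Omega,U)}$ to each summand. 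The first term is controlled by the spatial estimate in \eqref{eq:strong-error-coupling-single-lvl}, giving $C\lambda_N^{-\kappa/2} \eqsim N^{-\kappa\nu/2}$ after invoking $\lambda_N \eqsim N^\nu$ from Assumption~\ref{ass:reg}\ref{ass:eigenvalue-growth}; the second is controlled by the (uniform-constant) temporal estimate, giving $C M^{-\min(\kappa,1)/2}$. This yields \eqref{eq:weak-assumption-mlmc} with the admissible choice $\alpha_1 = \min(1,\kappa)/2$ and $\alpha_2 = \kappa\nu/2$. The bias of the finest level against the true solution then follows by letting $K,\overline M \to \infty$ and using that $\Psi(X_K^{\overline M}) \to \Psi(X)$ in $L^2(\Omega,U)$, which holds since $\psi$ is of at most linear growth by Assumption~\ref{ass:qoi}\ref{ass:qoi:derivative-bounded}.

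For the cost, the essential structural point is the anisotropy between spatial and temporal discretization, so I would not refine the two at the same rate. Writing $N_\ell \eqsim 2^\ell$ and $M_\ell \eqsim 2^{s\ell}$ for a scaling exponent $s>0$ to be chosen, the same splitting applied to the MLMC level difference $\Psi(X^{M_\ell}_{N_\ell}) - \Psi(X^{M_{\ell-1}}_{N_{\ell-1}})$, together with \eqref{eq:strong-error-coupling-single-lvl}, gives the variance decay $\|\Psi(X^{M_\ell}_{N_\ell}) - \Psi(X^{M_{\ell-1}}_{N_{\ell-1}})\|_{L^2(\Omega,U)}^2 \lesssim 2^{-\kappa\nu\ell} + 2^{-s\min(\kappa,1)\ell}$, i.e.\ a variance rate $\beta = \min(\kappa\nu, s\min(1,\kappa))$; the per-sample cost is $M_\ell N_\ell(\log_2 N_\ell)^{\mathfrak{l}} \eqsim 2^{(s+1)\ell}\ell^{\mathfrak{l}}$ by Assumption~\ref{ass:qoi}\ref{eq:cost-spde}, hence a cost rate $\gamma = s+1$; and \eqref{eq:weak-assumption-mlmc} gives a weak rate $\alpha = \min(s\alpha_1, \alpha_2)$ in the level index.

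I would then choose $s$ to balance the two weak-error contributions, $s\alpha_1 = \alpha_2$, which at the minimal admissible rates means $s = \nu$ when $\kappa \in (0,1)$ and $s = \kappa\nu$ when $\kappa \in [1,2)$ — the difference being exactly the saturation of the temporal strong rate at $1$ once $\kappa \ge 1$. Because the variance rates $\min(\kappa,1)$ and $\kappa\nu$ are precisely twice the minimal weak rates in each coordinate, one checks that $\beta = 2\alpha$ holds uniformly in $s$, so this balancing simultaneously optimizes the cost. With $\gamma > \beta$ in both regimes, the classical multilevel theorem (in the spirit of \cite[Theorem~2]{CHJZ22}, or \cite{HNT16} specialized to the diagonal) with optimized sample counts $m_\ell \propto \sqrt{V_\ell/C_\ell}$ and $L \eqsim \alpha^{-1}\log_2(\varepsilon^{-1})$ levels gives a dominant contribution $\eqsim \varepsilon^{-2-(\gamma-\beta)/\alpha}$. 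Substituting $s=\nu$ yields $(\gamma-\beta)/\alpha = (1+\nu(1-\kappa))/\min(\nu\alpha_1,\alpha_2)$ for $\kappa\in(0,1)$, and substituting $s=\kappa\nu$ yields $(\gamma-\beta)/\alpha = 1/\min(\kappa\nu\alpha_1,\alpha_2)$ for $\kappa\in[1,2)$, reproducing exactly the two stated exponents; the single $|\log_2\varepsilon|$ factor is the usual multilevel overhead from the $L$ levels.

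The main obstacle is bookkeeping rather than analysis, since every genuinely hard estimate is already packaged into Lemma~\ref{lem:strong-error-coupling}. The points requiring care are: verifying that balancing the weak errors is cost-optimal (which relies on the coincidence $\beta=2\alpha$ above), correctly identifying the two regimes of $\kappa$ arising from the temporal saturation, and discharging the integrality constraint $M_\ell = T/\tau_\ell \in \N$ on the chosen geometric sequences, which only perturbs constants and leaves the rates unchanged.
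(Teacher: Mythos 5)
Your proposal is correct and follows essentially the same route as the paper: the weak bound \eqref{eq:weak-assumption-mlmc} is obtained by splitting into spatial and temporal parts, applying Lemma~\ref{lem:strong-error-coupling} and Jensen's inequality to get $\alpha_1=\min(1,\kappa)/2$, $\alpha_2=\kappa\nu/2$, and the cost bound comes from an anisotropic hierarchy fed into the standard MLMC complexity theorem. Your choice $N_\ell\eqsim 2^{\ell}$, $M_\ell\eqsim 2^{s\ell}$ with $s=\kappa\nu/\min(1,\kappa)$ is exactly the paper's $M_k\eqsim 2^{k/\min(1,\kappa)}$, $N_k\eqsim 2^{k/(\kappa\nu)}$ up to a relabelling of the level index (the paper normalizes so that $V_\ell\lesssim 2^{-\ell}$, i.e.\ $\bar\beta=1$, whereas you normalize $N$; both yield the same $(\bar\gamma-\bar\beta)/\bar\alpha$ and hence the same exponents).
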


\begin{proof}[Sketch of proof]
  Lemma~\ref{lem:strong-error-coupling} implies that
  \[
    \|\Psi(X_K^{\overline{M}}) - \Psi(X_N^M) \|_{L^2(\Omega,H)} \le C (M^{-\min(1,\kappa)/2} + \lambda_{N+1}^{-\kappa/2}),
  \]
  which by Jensen's
  inequality %
  implies that inequality~\eqref{eq:weak-assumption-mlmc} is satisfied for
  $\alpha_1= \min(1,\kappa)/2$ and $\alpha_2 = \kappa \nu/2$ at least. We set
  \[
  M_{k} \eqsim 2^{k/\min(1,\kappa)} \quad \text{and} \quad N_{k} \eqsim 2^{k/(\kappa \nu)}
  \]
  and
  \[
  L = \max(\lceil \log_2(\varepsilon^{-1})/\bar \alpha \rceil, 1),
  \]
  with $\bar \alpha := \min(\alpha_1/\min(1,\kappa) \,, \, \alpha_2/(\kappa \nu))$.
  Defining $m_\ell$ as in~\cite[(2.14)]{CHJZ22},
  here with variance sequence $V_\ell \lesssim 2^{-\ell}$ and cost sequence
  $C_\ell \eqsim (\ell+1) 2^{(1/\min(1,\kappa) \, + \, 1/(\kappa \nu)) \ell}$,
  the result follows by a similar argument as in the proof of~\cite[Theorem 1]{CHJZ22}
  with the MLMC triplet of rate parameters $\bar \alpha$ as defined above, $\bar \beta =1$ and
  $\bar \gamma = 1/\min(1,\kappa)+ 1/(\kappa \nu)$, and using that $\bar \alpha \ge 1/2$.
\end{proof}

For performance comparison, we restrict ourselves to the
high-regularity setting favoring MLMC the most, when $\kappa \in [1,2)$.
Then Theorems~\ref{thm:mimc-cost-error} and~\ref{thm:mlmc-cost-error} assert that
MIMC asymptotically outperforms MLMC when
\[
  \frac{1-\kappa \nu}{\alpha_2} \le \max\left( \frac{1}{\kappa \nu \alpha_1}, \frac{1}{
      \alpha_2} \right),
\]
which we note always holds when $\kappa \nu\ge 1$.

\begin{remark}
By considering the SPDE~\eqref{eq:spde} with $G=0$ and
  imposing the stronger regularity assumptions~\cite[Appendix A]{CHJZ22}
  for the reaction term $F$, one can show that MLMCEI with optimally
  chosen parameters will achieve $\cO(\varepsilon^2)$ mean square error at
  \[
  \cost(\MLMCest) =
  \begin{cases}
    \cO(\varepsilon^{-2} ) & \text{if} \quad \kappa \nu >2 \\
     \cO(\varepsilon^{-2} |\log_2 \varepsilon |^5) & \text{if} \quad \kappa \nu = 2 \\
     \cO\left(\varepsilon^{-(1+2/(\kappa \nu))} |\log_2 \varepsilon |^{2/(\kappa \nu) +2}  \right) & \text{if} \quad \kappa \nu < 2.
  \end{cases}
  \]
  So even in this more restricted problem setting, MIMCEI will asymptotically be more efficient than MLMCEI whenever $\kappa \nu < 2$ and $\kappa \in [1,2)$. However, numerical experiments and known results for smooth \(F\) suggest that the techniques of Theorem~\ref{thm:second-order-diff} lead to non-sharp convergence rates in this setting. We therefore postpone a complete analysis in the case $G=0$ to future work.
\end{remark}

\begin{remark}\label{rem:reduced-samples}
  Note that in Corollary~\ref{coro:mix-diff}, we use sub-optimal bounds on
  \(\E[\|\Delta_{\boldEll} \Psi(X) \|_{U}^{2}]\) whenever \(\psi \in \mathcal L(H, U)\) for
  \(\boldEll \in \N^{2}\), cf., Lemma~\ref{lem:strong-error-coupling}. %
  For example, when \(\kappa \in (1,2)\), by combining
  \eqref{eq:strong-error-coupling}--\eqref{eq:strong-error-coupling2}, a more
  optimal bound can be written as
  \begin{equation}\label{eq:l2-mix-diff-sharp-bounds}
    \E[\|\Delta_{\boldEll} \Psi(X) \|_{U}^{2}] \lesssim
    \begin{cases}
      2^{-\kappa \nu \ell_2} & \ell_{1} = 0\\
      2^{-\kappa \nu \ell_{2} -\ell_{1}} \min \big(1 , 2^{- \kappa\nu \ell_{2} + \ell_{1}} \big) & \ell_{1} > 0
    \end{cases}
  \end{equation}
  In other words, unless \(\ell_{1}=0\) where we have a single difference along
  the spatial direction, we obtain an improved convergence rate with respect
  to \(\ell_{2}\).
  The sub-optimal bounds in Corollary~\ref{coro:mix-diff} are sufficient for
  the purposes of Theorems~\ref{thm:mimc-cost-error} and
  \ref{thm:mimc-cost-error-extra}, since the computational complexity of MIMC
  is determined by the convergence rates of \(\|\Delta_{\boldEll} \Psi(X) \|_{L^{2}{(\Omega,
      U)}}\) when \(\min(\ell_{1}, \ell_{2}) = 0\) and the improved bounds in
  Lemma~\ref{lem:strong-error-coupling} for \(\boldEll \in \N^{2}\) do not lead
  to an improved computational complexity.
  Nevertheless, recalling that the number of samples for a multi-index
  \(\boldEll\) can be made a function of the bound on \(\|\Delta_{\boldEll} \Psi(X)
  \|_{L^{2}(\Omega, U)}\), cf., \eqref{eq:ml-general}, we can use the improved bound
  \eqref{eq:l2-mix-diff-sharp-bounds} in place of \eqref{eq:strong-error-coupling} to reduce the number of samples for a
  multi-index \(\boldEll \in \N^{2}\). We explore this in the numerical
  experiment in Section~\ref{subsec:linDriftMultNoise}.

  Additionally, the choice of the index set in
  Theorem~\ref{thm:mimc-cost-error} depends on the ``weak'' convergence rates
  with respect to \(\ell_{1}\) and \(\ell_{2}\) in the bound on \(\|\E[\Delta_{\boldEll} \Psi
  (X)] \|_{U}\), cf. \eqref{eq:weak-error-coupling-delta}. When \(\Psi\) is
  linear, we may be able to obtain improved weak convergence rates that
  reflect the faster convergence of \(\|\Delta_{\boldEll} \Psi(X) \|_{L^{2}{(\Omega, U)}}\)
  in \eqref{eq:l2-mix-diff-sharp-bounds}. However, we are again restricted by
  the slow weak convergence rate along the axis \(\ell_{1}=0\). Methods that can
  improve the convergence rate along this axis, such as multilevel
  Richardson-Romberg extrapolation \cite{LP17}, could provide a way to allow
  us to utilize the improved convergence rate when \(\ell_{1} > 0\). Such a study
  is outside the scope of the current work.
\end{remark}

\section{Convergence analysis} \label{sec:convergence_analysis}

The goal of this section is to obtain a bound on the $L^2(\Omega,H)$-norm of the second order difference
\begin{equation}
	\label{eq:second-order}
	\cE^{L,M}_{K,N}(t) :=
	X_K^L(\lfloor t\rfloor_{M^{-1}}) - X_N^L(\lfloor t\rfloor_{M^{-1}}) - X_K^M(\lfloor t\rfloor_{M^{-1}}) + X_N^M(\lfloor t\rfloor_{M^{-1}})
\end{equation}
for integers $L\ge M$, $K \ge N$.
First, we need a preliminary lemma involving bounds on $P_N$ and $S$. For brevity, we use the notation $E(t) := S(t)-I$.
\begin{restatable}{lemma}{ineqlemma}
	\label{lem:SP-estimates}
	There is a constant $C < \infty$ such that
        \begin{enumerate}[label=(\roman*)]
		\item \label{lem:SP-estimates:proj-bound} $\| (I-P_N) \|_{\cL(\dot{H}^r,H)} = \| (I-P_N) A^{-\frac{r}{2}} \|_{\cL(H)} \le \lambda_{N+1}^{-\frac{r}{2}}$, for $N \in \N, r \ge 0$
		\item \label{lem:SP-estimates:semigroup-time-bound}
		$\| E(t) \|_{\cL(\dot{H}^{r},H)} = \| E(t) A^{-\frac{r}{2}} \|_{\cL(H)} \le C t^{\frac{r}{2}}$, for $t \ge 0, r \in [0,2], $
		\item \label{lem:SP-estimates:semigroup-analyticity}
		$\| S(t) \|_{\cL(H,\dot{H}^{r})} = \| A^\frac{r}{2} S(t) \|_{\cL(H)} \le C t^{-\frac{r}{2}}$ for $ t> 0, r \ge 0$, $\|S(t)\|_{\cL(H)} \le C$ for $t \ge 0$ and
		\item \label{lem:SP-estimates:semigroup-integral-bound-2}
		$\int^{t_2}_{t_1} \| S(t) v \|_{\dot{H}^r}^2 \dd t \le C (t_2-t_1)^{1-r} \| v \|^2$, for $t_2 \ge t_1 \ge 0, v \in H, r \in [0,1]$.
	\end{enumerate}
\end{restatable}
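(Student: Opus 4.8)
The plan is to reduce every estimate to a scalar supremum or integral over the spectrum of $A$ via the spectral decomposition $S(t)e_j = e^{-\lambda_j t}e_j$ and $A^{s/2}e_j = \lambda_j^{s/2}e_j$. Since $A^{r/2}$ is by definition an isometric isomorphism from $\dot{H}^r$ onto $H$, the stated identities between $\|\cdot\|_{\cL(\dot{H}^r,H)}$ and $\|\cdot\, A^{-r/2}\|_{\cL(H)}$, and between $\|\cdot\|_{\cL(H,\dot{H}^r)}$ and $\|A^{r/2}\,\cdot\,\|_{\cL(H)}$, are immediate, so it suffices to bound the $\cL(H)$-norms on the right-hand sides. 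Each such operator is diagonal in the basis $(e_j)_{j=1}^\infty$, hence its $\cL(H)$-norm equals the supremum over $j$ of the corresponding scalar multiplier: $\lambda_j^{-r/2}$ restricted to $j>N$ for \ref{lem:SP-estimates:proj-bound}, $(1-e^{-\lambda_j t})\lambda_j^{-r/2}$ for \ref{lem:SP-estimates:semigroup-time-bound}, and $\lambda_j^{r/2}e^{-\lambda_j t}$ for \ref{lem:SP-estimates:semigroup-analyticity}.

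For \ref{lem:SP-estimates:proj-bound}, the supremum of $\lambda_j^{-r/2}$ over $j>N$ is attained at $j=N+1$ because $(\lambda_j)$ is nondecreasing and $r \ge 0$, giving the bound $\lambda_{N+1}^{-r/2}$ with constant one. Items \ref{lem:SP-estimates:semigroup-time-bound} and \ref{lem:SP-estimates:semigroup-analyticity} are driven by two elementary scalar inequalities. First, $1-e^{-x}\le x^{s}$ for all $x\ge 0$ and $s\in[0,1]$ (split at $x=1$): applied with $x=\lambda_j t$ and $s=r/2\in[0,1]$, valid since $r\in[0,2]$, this yields $(1-e^{-\lambda_j t})\lambda_j^{-r/2}\le t^{r/2}$ uniformly in $j$, proving \ref{lem:SP-estimates:semigroup-time-bound}. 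Second, $\sup_{x\ge 0}x^{a}e^{-x}=(a/e)^{a}<\infty$: writing $\lambda_j^{r/2}e^{-\lambda_j t}=t^{-r/2}(\lambda_j t)^{r/2}e^{-\lambda_j t}$ and taking the supremum gives \ref{lem:SP-estimates:semigroup-analyticity}, while $e^{-\lambda_j t}\le 1$ for $\lambda_j>0$, $t\ge 0$, settles the second claim there with constant one.

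For \ref{lem:SP-estimates:semigroup-integral-bound-2} I would use Tonelli to interchange the sum and the time integral (all terms nonnegative), reducing the claim to controlling $|\langle v,e_j\rangle|^2\int_{t_1}^{t_2}\lambda_j^{r}e^{-2\lambda_j t}\dd t$ summed over $j$. Evaluating the integral and bounding $e^{-2\lambda_j t_1}\le 1$ leaves $\tfrac12\lambda_j^{r-1}(1-e^{-2\lambda_j(t_2-t_1)})$; applying $1-e^{-x}\le x^{1-r}$ with $1-r\in[0,1]$, valid since $r\in[0,1]$, and $x=2\lambda_j(t_2-t_1)$ collapses the $\lambda_j$-dependence to a constant multiple of $(t_2-t_1)^{1-r}$. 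Summing against $\sum_j|\langle v,e_j\rangle|^2=\|v\|^2$ then gives the result.

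None of these steps presents a genuine obstacle; they are the standard smoothing and stability estimates for the analytic semigroup generated by a positive self-adjoint operator with discrete spectrum. The only points demanding care are the exact matching of exponents so that the powers of $\lambda_j$ cancel—in \ref{lem:SP-estimates:semigroup-time-bound} the exponent of the elementary inequality must be precisely $r/2$, and in \ref{lem:SP-estimates:semigroup-integral-bound-2} precisely $1-r$—together with the range restrictions $r\in[0,2]$ and $r\in[0,1]$ that keep these exponents inside $[0,1]$, and the monotonicity of $(\lambda_j)$ invoked in \ref{lem:SP-estimates:proj-bound}.
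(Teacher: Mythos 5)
Your proposal is correct and is essentially the argument the paper relies on: the paper disposes of item \ref{lem:SP-estimates:proj-bound} ``by definition of $P_N$'' and simply cites \cite[Lemma~B.9]{K14} for the rest, and your spectral reduction to the scalar bounds $1-e^{-x}\le x^{s}$ for $s\in[0,1]$ and $\sup_{x\ge 0}x^{a}e^{-x}<\infty$, together with the Tonelli interchange and the exact cancellation of $\lambda_j^{r-1}\cdot\lambda_j^{1-r}$ in item \ref{lem:SP-estimates:semigroup-integral-bound-2}, is precisely the standard proof behind that reference. No gaps; you correctly track the exponent ranges $r\in[0,2]$ and $r\in[0,1]$ and the monotone ordering of $(\lambda_j)$ needed for the supremum in item \ref{lem:SP-estimates:proj-bound}.
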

The first estimate follows directly by definition of $P_N$, we refer to
\cite[Lemma~B.9]{K14} for the rest.
Lemma~\ref{lem:SP-estimates}\ref{lem:SP-estimates:semigroup-analyticity}
  and Assumption~\ref{ass:reg}\ref{ass:eigenvalue-growth} yield a bound on
  $S(t)G$ that we will make ample use of, namely,
  \begin{equation}
    \label{eq:theta-G-bound}
    \begin{split}
      \| S(t) G(u) \|^2_{\cL_2(Q^{\frac{1}{2}}(H),\dot{H}^\theta)} &= \sum_{j=1}^\infty \lambda_j^{\theta} \zeta^2_j \mu_j |\langle u , e_{j+m} \rangle|^2 \| S(t) e_j \|^2 \\
                                                            &\le C \sum_{j=1}^\infty \lambda_j^{1-\kappa-\delta+\theta} \lambda^{\chi}_j \lambda^{-\chi}_j |\langle u , e_{j+m} \rangle|^2 \| S(t) e_j \|^2  \\
                                                            &\le C \sum_{j=1}^\infty |\langle u , \lambda_{j+m}^{\frac{\theta-\kappa+\chi}{2}} e_{j+m} \rangle|^2 \| A^{\frac{1-\chi-\delta}{2}} S(t) e_j \|^2 \\
                                                            &\le C t^{\delta + \chi-1} \| u \|^2_{\dot{H}^{\theta+\chi-\kappa}}
    \end{split}
  \end{equation}
  for $\theta \in \R$, $\chi \in (-\infty,1-\delta]$, $t > 0$ and $u \in \dot{H}^{\theta+\chi-\kappa}$. This and the other
  estimates of Lemma \ref{lem:SP-estimates} are used to derive a lemma involving
regularity and convergence of the exponential integrator approximation
$X_N^M$. With the exception of~\ref{lem:appr:neg-space-error}, the results are
standard but we know no estimates with these exact assumptions in the
literature to cite, wherefore we include a proof.
\begin{restatable}{lemma}{boundslemma}
	\label{lem:appr}
		Under Assumption~\ref{ass:reg}, there is for all $T < \infty$ a constant $C < \infty$ such that
		\begin{enumerate}[label=(\roman*)]
			\item \label{lem:appr:space-reg} $\| X_N^M(t)\|_{L^{10}(\Omega,\dot{H}^{\kappa})} \le C, \quad t \in [0,T], M,N \in \N, $
			\item \label{lem:appr:time-reg} $\| X_N^M(t) - X_N^M(s) \|_{L^{10}(\Omega,H)} \le C \lambda^{\frac{\min(2\kappa,1)-\min(\kappa,1)}{2}}_N |t-s|^{\min(2\kappa,1)/2}, \hspace{0.5em} s,t \in [0,T], M,N \in \N$ and
$\| X_N^M(t) - X_N^M(s) \|_{L^{10}(\Omega,H)} \le C |t-s|^{\min(\kappa,1)/2}, \hspace{0.5em} s,t \in [0,T], M,N \in \N$
			\item \label{lem:appr:space-error}$\| X_N^M(t) - X_K^M(t) \|_{L^{10}(\Omega,H)} \le C \lambda_{N+1}^{-\kappa/2}, \quad t \in [0,T], K \ge N \in \N, $
			\item \label{lem:appr:neg-space-error} $\| X_N^M(t) - X_K^M(t) \|_{L^{\frac{10}{3}}(\Omega,\dot{H}^{-\kappa})} \le C \lambda_{N+1}^{-\kappa}, t \in [0,T], K \ge N \in \N$ and
			\item \label{lem:appr:time-error}$\| X_N^M(t) - X_N^L(t) \|_{L^{10}(\Omega,H)} \le C \lambda^{\frac{\min(2\kappa,1)-\min(\kappa,1)}{2}}_N M^{-\min(2\kappa,1)/2}, \quad t \in [0,T], L \ge M \in \N$ and
$\| X_N^M(t) - X_N^L(t) \|_{L^{10}(\Omega,H)} \le C M^{-\min(\kappa,1)/2}, \quad t \in [0,T], L \ge M \in \N$.
		\end{enumerate}
\end{restatable}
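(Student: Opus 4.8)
The plan is to work throughout from the continuous mild form defining $X^M_N$, decomposing it into the initial term $S(t)P_NX_0$, the drift convolution and the stochastic convolution, and to estimate the three pieces with the Burkholder--Davis--Gundy (BDG) inequality, Lemma~\ref{lem:SP-estimates}, the smoothing bound~\eqref{eq:theta-G-bound} for $S(t)G$, the Lipschitz/linear-growth bounds~\eqref{eq:F-lip-lin-growth}, and the decay and moment hypotheses of Assumption~\ref{ass:reg}. I would prove~\ref{lem:appr:space-reg} first, deduce the three ``standard'' estimates~\ref{lem:appr:time-reg}, \ref{lem:appr:space-error} and~\ref{lem:appr:time-error} from it, and treat the negative-norm estimate~\ref{lem:appr:neg-space-error} last, as it is the genuinely delicate one.

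For~\ref{lem:appr:space-reg} I apply $A^{\kappa/2}$ and take $L^{10}(\Omega)$ norms: the initial term is controlled by $\|X_0\|_{\dot H^\kappa}$ (Assumption~\ref{ass:reg}\ref{ass:X0}); the drift term by $\int_0^t(t-s)^{-\kappa/2}(1+\|X^M_N(\lfloor s\rfloor_\tau)\|)\dd s$ via Lemma~\ref{lem:SP-estimates}\ref{lem:SP-estimates:semigroup-analyticity} and linear growth; and the stochastic term, after BDG, splits into an additive part whose $\dot H^\kappa$-energy is $\tfrac12\sum_j\mu_j\lambda_j^{\kappa-1}<\infty$ by the second bound of Assumption~\ref{ass:reg}\ref{ass:Q}, and a multiplicative part that~\eqref{eq:theta-G-bound} with $\theta=\kappa$ and $\chi\le\min(\kappa,1-\delta)$ reduces to a weakly singular integral of $\|X^M_N\|^2_{\dot H^\kappa}$; a weakly singular Gr\"onwall inequality then closes the a priori bound. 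Estimate~\ref{lem:appr:time-reg} follows by differencing the mild form at $t$ and $s$, splitting the stochastic convolution into a near-diagonal part and a far-diagonal part carrying a factor $E(t-s)=S(t-s)-I$ (Lemma~\ref{lem:SP-estimates}\ref{lem:SP-estimates:semigroup-time-bound}); the additive near-diagonal contribution equals $\sum_{j\le N}\mu_j\lambda_j^{a-1}|t-s|^a$ after using $1-\mathrm{e}^{-x}\le\min(1,x)\le x^a$, and the two stated bounds correspond to $a=\min(2\kappa,1)$ (using $\lambda_j\le\lambda_N$, whence the prefactor) and $a=\min(\kappa,1)$ (using Assumption~\ref{ass:reg}\ref{ass:Q} directly), the drift and multiplicative contributions being of higher order. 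Estimate~\ref{lem:appr:space-error} is obtained for $Y:=X^M_N-X^M_K$ from its mild equation: the $X_0$- and additive-noise source terms give the rate $\lambda_{N+1}^{-\kappa/2}$ through Lemma~\ref{lem:SP-estimates}\ref{lem:SP-estimates:proj-bound} and Assumption~\ref{ass:reg}\ref{ass:Q}, \ref{ass:X0}, while the drift difference is Lipschitz \emph{with no growth factor} by~\eqref{eq:F-lip-lin-growth} and the noise difference is handled by~\eqref{eq:theta-G-bound}, so a Gr\"onwall argument closes cleanly in $L^{10}(\Omega,H)$. Estimate~\ref{lem:appr:time-error} is analogous, with $M^{-1}$ playing the role of the increment and the one-step error of the piecewise-constant evaluation $\lfloor\cdot\rfloor_{M^{-1}}$ controlled by~\ref{lem:appr:time-reg}.

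The crux is~\ref{lem:appr:neg-space-error}. Fixing $M$ and writing $X_N,X_K$, I split $Y=X_N-X_K=P_NY-Y_{\mathrm{hi}}$ with $Y_{\mathrm{hi}}:=(P_K-P_N)X_K$. The high-mode piece is immediate: as $Y_{\mathrm{hi}}$ is supported on modes $>N$, one has $\|Y_{\mathrm{hi}}\|_{\dot H^{-\kappa}}\le\lambda_{N+1}^{-\kappa}\|X_K\|_{\dot H^\kappa}$, which is $O(\lambda_{N+1}^{-\kappa})$ even in $L^{10}(\Omega)$ by~\ref{lem:appr:space-reg}. For the low-mode piece $P_NY=X_N-P_NX_K$ the additive noise cancels and, because $GX_N=P_NGX_N$ under~\eqref{eq:G-form}, the noise difference reduces to $P_NGY$, so that $P_NY(t)=\int_0^tS(t-s)P_N(F(X_N)-F(X_K))(\lfloor s\rfloor_\tau)\dd s+\int_0^tS(t-s)P_NGY(\lfloor s\rfloor_\tau)\dd W(s)$. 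Inserting $Y=P_NY-Y_{\mathrm{hi}}$ and the mean value theorem $F(X_N)-F(X_K)=\int_0^1F'(\xi_r)Y\,\dd r$, the terms carrying $Y_{\mathrm{hi}}$ become genuine source terms: by Assumption~\ref{ass:reg}\ref{ass:derivative-negnorm} the drift source is bounded in $\dot H^{-\eta}$ by $C(1+\|\xi\|^{\lceil\kappa\rceil}_{\dot H^\kappa})\lambda_{N+1}^{-\kappa}\|X_K\|_{\dot H^\kappa}$, and the noise source by~\eqref{eq:theta-G-bound} with $\theta=-\kappa$ together with the high-mode decay of $Y_{\mathrm{hi}}$; both are $O(\lambda_{N+1}^{-\kappa})$, with the $\dot H^{-\eta}\to\dot H^{-\kappa}$ smoothing (Lemma~\ref{lem:SP-estimates}\ref{lem:SP-estimates:semigroup-analyticity}) keeping the integrals convergent since $\eta<2$. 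This is exactly where the exponent $10/3$ enters: as $\lceil\kappa\rceil\le 2$, the prefactor $(1+\|\xi\|^{\lceil\kappa\rceil}_{\dot H^\kappa})\|X_K\|_{\dot H^\kappa}$ is at most cubic in $\dot H^\kappa$-norms and hence lies in $L^{10/3}(\Omega)$ by~\ref{lem:appr:space-reg}.

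The hard part is the remaining feedback terms $\int_0^tS(t-s)P_N\int_0^1F'(\xi_r)P_NY\,\dd r\,\dd s$ and $\int_0^tS(t-s)P_NGP_NY\,\dd W$, which are self-referential in $\|P_NY\|_{\dot H^{-\kappa}}$. The noise feedback is benign: \eqref{eq:theta-G-bound} carries no growth factor, so after BDG it produces a weakly singular Gr\"onwall term in $\|P_NY\|^2_{L^{10/3}(\Omega,\dot H^{-\kappa})}$ that closes. The drift feedback is the real obstacle, because Assumption~\ref{ass:reg}\ref{ass:derivative-negnorm} necessarily carries the polynomial factor $\|\xi\|^{\lceil\kappa\rceil}_{\dot H^\kappa}$ (the factor-free bound~\ref{ass:derivative-bounded} would only reproduce the half-rate $\lambda_{N+1}^{-\kappa/2}$ of~\ref{lem:appr:space-error}); a naive H\"older splitting of this factor against $\|P_NY\|_{\dot H^{-\kappa}}$ always forces a strictly higher moment of $P_NY$ than the one being estimated, so a single-moment Gr\"onwall does not close directly. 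The route I would attempt is an energy/It\^o estimate for $\|P_NY\|^2_{\dot H^{-\kappa}}$, in which the dissipativity of $-A$ absorbs the pairing $\langle P_NY,F'(\xi)P_NY\rangle_{\dot H^{-\kappa}}$ up to $\varepsilon\|P_NY\|^2_{\dot H^{1-\kappa}}$ and leaves a coefficient $(1+\|\xi\|^{\lceil\kappa\rceil}_{\dot H^\kappa})^2$ to be controlled through~\ref{lem:appr:space-reg}; I expect the delicate point -- and the main technical difficulty of the whole lemma -- to be carrying out this bookkeeping so as to preserve the full rate $\lambda_{N+1}^{-\kappa}$ at precisely the moment $L^{10/3}(\Omega,\dot H^{-\kappa})$, rather than losing half the rate to the growth factor.
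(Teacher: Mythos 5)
Your treatment of parts \ref{lem:appr:space-reg}, \ref{lem:appr:time-reg}, \ref{lem:appr:space-error} and \ref{lem:appr:time-error} follows essentially the same route as the paper: mild-form decomposition, BDG, Lemma~\ref{lem:SP-estimates}, the bound~\eqref{eq:theta-G-bound}, and Gr\"onwall. For part \ref{lem:appr:neg-space-error} you also choose the correct splitting $X_N^M - X_K^M = -(I-P_N)X_K^M + Z$ with $Z = X_N^M - P_N X_K^M$, and you correctly identify that the genuine source term is the high-mode truncation error fed through $F'$, handled by Assumption~\ref{ass:reg}\ref{ass:derivative-negnorm} at the full rate $\lambda_{N+1}^{-\kappa}$, with the exponent $10/3$ dictated by the cubic growth in $\dot H^\kappa$-norms.

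However, your argument for \ref{lem:appr:neg-space-error} does not close, and the obstacle you describe at length is one you have created yourself. You insist on running the fixed-point/Gr\"onwall loop for $Z$ in the norm $L^{10/3}(\Omega,\dot H^{-\kappa})$, which forces you to estimate the feedback term $\int_0^t S(t-s)P_N\int_0^1 F'(\xi_r)Z\,\dd r\,\dd s$ via Assumption~\ref{ass:reg}\ref{ass:derivative-negnorm}; the polynomial factor $(1+\|\xi\|_{\dot H^\kappa}^{\lceil\kappa\rceil})$ then multiplies the unknown and a single-moment Gr\"onwall indeed fails, leading you to a speculative energy/It\^o argument whose rate you yourself doubt. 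The paper's proof avoids all of this: it estimates $Z$ in $L^{10/3}(\Omega,H)$, where the feedback term is controlled by the \emph{factor-free} bound of Assumption~\ref{ass:reg}\ref{ass:derivative-bounded}, $\|F'(u)v\|\le C\|v\|$, giving simply $\int_0^t\|Z(\lfloor s\rfloor_\tau)\|_{L^{10/3}(\Omega,H)}\,\dd s$, while the source term $\int_0^t S(t-s)P_N\bigl(F(P_NX_K^M)-F(X_K^M)\bigr)\dd s$ is pulled back to $H$ by the smoothing $\|S(t-s)A^{\eta/2}\|_{\cL(H)}\le C(t-s)^{-\eta/2}$ (integrable since $\eta<2$) and bounded by $\lambda_{N+1}^{-\kappa}$ times an $L^{10/3}$-moment of $(1+\|X_K^M\|_{\dot H^\kappa}^2)\|X_K^M\|_{\dot H^\kappa}$, which part \ref{lem:appr:space-reg} and H\"older control. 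Your stated reason for rejecting this route --- that the factor-free bound ``would only reproduce the half-rate $\lambda_{N+1}^{-\kappa/2}$'' --- is a misconception: the half rate in \ref{lem:appr:space-error} comes from the \emph{source} terms (initial data, additive noise, $(P_N-P_K)F$), all of which cancel or carry the full rate $\lambda_{N+1}^{-\kappa}$ for $Z=X_N^M-P_NX_K^M$; the Lipschitz feedback in $H$ merely propagates whatever rate the sources provide. Since $\|Z\|_{\dot H^{-\kappa}}\lesssim\|Z\|_H$, the $H$-estimate immediately yields the claimed negative-norm bound, and no dissipativity or energy argument is needed.
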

\begin{proof}
	Throughout the proof, we let $C< \infty$ denote a generic constant which may change from line to line but does not depend on $M, N, L, s$ or $t$. For brevity, we write $\tau := M^{-1}$.

	To derive \ref{lem:appr:space-reg},  we first use the triangle inequality to see that
	\begin{equation}
		\label{eq:lem:appr:pfeq2}
		\begin{split}
		\qquad\| X_N^M(t)\|_{L^{10}(\Omega,\dot{H}^{\kappa})} &\le \|S(t) P_N X_0\|_{L^{10}(\Omega,\dot{H}^{\kappa})} + \Big\|\int^t_0 S(t-s) P_N F(X^M_N(\lfloor s \rfloor_{\tau})) \dd s \Big\|_{L^{10}(\Omega,\dot{H}^{\kappa})}\\
		&\quad+ \Big\|\int^t_0 S(t-s) \big(P_N + G X^M_N(\lfloor s \rfloor_{\tau}) \big) \dd W(s)\Big\|_{L^{10}(\Omega,\dot{H}^{\kappa})}.
	\end{split}
	\end{equation}
	Using Lemma \ref{lem:SP-estimates}\ref{lem:SP-estimates:semigroup-analyticity} and Assumption~\ref{ass:reg}\ref{ass:X0}, the first term is bounded by a constant, whereas for the second term we also use~\eqref{eq:F-lip-lin-growth} to see that
	\begin{equation*}
		\Big\|\int^t_0 S(t-s) P_N F(X^M_N(\lfloor s \rfloor_{\tau})) \dd s \Big\|_{L^{10}(\Omega,\dot{H}^{\kappa})} \le C \int^t_0 (t-s)^{-\kappa/2} \big(1+\| X^M_N(\lfloor s \rfloor_{\tau})\|_{L^{10}(\Omega,H)}\big) \dd s.
	\end{equation*}
	For the third term, the Burkholder--Davis--Gundy (BDG) inequality \cite[Theorem~4.36]{DPZ14}, the triangle inequality, Lemma~\ref{lem:SP-estimates}\ref{lem:SP-estimates:semigroup-integral-bound-2}, Assumption~\ref{ass:reg}\ref{ass:Q} and \eqref{eq:theta-G-bound} with $\theta = \kappa$ yield that the stochastic integral term is bounded by
	\begingroup
	\allowdisplaybreaks
	\begin{align*}
		& \E\Big[\Big(\int^t_0 \| S(t-s) \big(P_N + G X^M_N(\lfloor s \rfloor_{\tau}) \big) \|_{\cL_2(Q^{1/2}(H),\dot{H}^\kappa)}^2 \dd s\Big)^{5}\Big]^{\frac{1}{10}} \\
		&\quad\le \left(\int^t_0 \| S(t-s) \|_{\cL_2(Q^{1/2}(H),\dot{H}^\kappa)}^2 \dd s\right)^{\frac{1}{2}} \\
		&\qquad+ \E\Big[\Big(\int^t_0 \| S(t-s)G X^M_N(\lfloor s \rfloor_{\tau}) \|_{\cL_2(Q^{1/2}(H),\dot{H}^\kappa)}^2 \dd s\Big)^{5}\Big]^{\frac{1}{10}}\\
		&\quad\le C\left(\int^t_0 \| S(s) A^{\frac{\kappa-1}{2}} I_{Q^{1/2}(H) \hookrightarrow \dot{H}^{\kappa-1}} \|_{\cL_2(Q^{1/2}(H),\dot{H}^1)}^2 \dd s\right)^{\frac{1}{2}} \\
		&\qquad+ C\E\Big[\Big(\int^t_0 (t-s)^{\delta-1}\|X^M_N(\lfloor s \rfloor_{\tau})\|^2 \dd s \Big)^{5}\Big]^{\frac{1}{10}} \\
		&\quad\le C \Bigg(\| I_{Q^{1/2}(H) \hookrightarrow \dot{H}^{\kappa-1}} \|_{\cL_2(Q^{1/2}(H),\dot{H}^{\kappa-1})} + \Big(\int^t_0 (t-s)^{\delta-1}\|X^M_N(\lfloor s \rfloor_{\tau})\|_{L^{10}(\Omega,\dot{H}^\kappa)}^2 \dd s\Big)^{1/2}\Bigg).
	\end{align*}%
	\endgroup
	In the last step we made use of the fact that $\dot{H}^r \hookrightarrow \dot{H}^s$ for $r \ge s$, a fact that we will use several times below without further mention. Squaring both sides of \eqref{eq:lem:appr:pfeq2} and appealing to H\"older's inequality for the term involving $F$, combined with the discrete Gronwall inequality completes the proof of \ref{lem:appr:space-reg} in the case that $t = \lfloor t \rfloor_{\tau}$. Combining this estimate with the (continuous) Gronwall inequality proves the claim for general $t$.

	We only prove the first of the two bounds in \ref{lem:appr:time-reg}, since the second is of a more standard nature and can be proven by similar techniques. To see that the first bound holds true, we use the semigroup property and the BDG inequality
	to see that
	\begin{align*}
		\| X_N^M(t) - X_N^M(s)\|_{L^{10}(\Omega,H)} &\le C \Big(\|E(t-s) X_N^M(s)\|_{L^{10}(\Omega,H)} \\&\quad+ \Big\|\int^t_s S(t-r) P_N F(X^M_N(\lfloor r \rfloor_{\tau})) \dd r \Big\|_{L^{10}(\Omega,H)} \\
		&\quad+ \E\Big[\Big(\int^t_s \| S(t-r) \big(P_N + G X^M_N(\lfloor r \rfloor_{\tau}) \big) \|_{\cL_2(Q^{1/2}(H),H)}^2 \dd r\Big)^{5}\Big]^{\frac{1}{10}}\Big).
	\end{align*}
	Using the fact that
	\begin{equation}\label{eq:lem:appr:pfeq3}\| P_N v \| \le C  \lambda^{r/2}_N \| P_N v \|_{\dot{H}^{-r}} \quad \text{for } v \in H, r \ge 0,
	\end{equation}
	the first term can be bounded by
\begin{align*}
	\|E(t-s) X_N^M(s)\|_{L^{10}(\Omega,H)} &\le \lambda^{\frac{\min(2\kappa,1)-\min(\kappa,1)}{2}}_N \| E(t-s) A^{\frac{\min(\kappa,1)-\min(2\kappa,1)-\kappa}{2}}\|_{\cL(H)} \\
	&\quad \times \|X_N^M(s)\|_{L^{10}(\Omega,\dot{H}^{\kappa})}
\end{align*}
	which in turn may be bounded by a constant times  $$\lambda^{\frac{\min(2\kappa,1)-\min(\kappa,1)}{2}}_N |t-s|^{(\min(2\kappa,1)-\min(0,1-\kappa))/2}\le C\lambda^{\frac{\min(2\kappa,1)-\min(\kappa,1)}{2}}_N |t-s|^{\min(2\kappa,1)/2}$$ using Lemma~\ref{lem:SP-estimates}\ref{lem:SP-estimates:semigroup-time-bound} and Lemma~\ref{lem:appr}\ref{lem:appr:space-reg}. For the term involving $F$ we can use Lemma~\ref{lem:appr}\ref{lem:appr:space-reg} along with Lemma~\ref{lem:SP-estimates}\ref{lem:SP-estimates:semigroup-analyticity} and~\eqref{eq:F-lip-lin-growth} to obtain a similar bound, and for the last term we also use Assumption~\ref{ass:reg}\ref{ass:Q}, Lemma~\ref{lem:SP-estimates}\ref{lem:SP-estimates:semigroup-integral-bound-2} and~\eqref{eq:theta-G-bound}. The last term, in particular, can be bounded by a constant multiplied by $\lambda^{\frac{\min(2\kappa,1)-\min(\kappa,1)}{2}}_N |t-s|^{\min(2\kappa,1)/2}$ and no smaller bound is in general possible.

	We next derive \ref{lem:appr:space-error}, observing that
	\begin{align*}
		&\| X_N^M(t) - X_K^M(t) \|_{L^{10}(\Omega,H)} \\
		&\quad\le C \Bigg(\|(P_N-P_K)S(t)X_0\|_{L^{10}(\Omega,H)} \\
		&\qquad + \Big\|\int^t_0 S(t-s) (P_N - P_K) F(X^M_N(\lfloor s \rfloor_{\tau})) \dd s \Big\|_{L^{10}(\Omega,H)} \\
		&\qquad + \Big\|\int^t_0 S(t-s) P_K \big(F(X^M_N(\lfloor s \rfloor_{\tau})) -F(X^M_K(\lfloor s \rfloor_{\tau}))\big) \dd s \Big\|_{L^{10}(\Omega,H)} \\
		&\qquad+ \E\Big[\Big(\int^t_0 \| S(t-s) G \big(X^M_N(\lfloor s \rfloor_{\tau}) - X^M_K(\lfloor s \rfloor_{\tau}) \big) \|_{\cL_2(Q^{1/2}(H),H)}^2 \dd s\Big)^{5}\Big]^{\frac{1}{10}} \\
		&\qquad+ \Big(\int^t_0 \| S(t-s) (P_N - P_K ) \|_{\cL_2(Q^{1/2}(H),H)}^2 \dd s\Big)^{\frac 1 2}\Bigg).
	\end{align*}
	By Assumption~\ref{ass:reg}\ref{ass:X0} along with Lemma \ref{lem:SP-estimates}\ref{lem:SP-estimates:proj-bound} and~\ref{lem:SP-estimates:semigroup-analyticity},
	\begin{equation*}
		\|(P_N-P_K)S(t)X_0\|_{L^{10}(\Omega,H)} = \|P_K(P_N-I)A^{-\frac{\kappa}{2}}S(t)A^{\frac{\kappa}{2}}X_0\|_{L^{10}(\Omega,H)} \le C \lambda_{N+1}^{-\frac{\kappa}{2}} \|X_0\|_{L^{10}(\Omega,\dot{H}^\kappa)}.
	\end{equation*}
	Similarly, using also Lemma~\ref{lem:appr}\ref{lem:appr:space-reg} and~\eqref{eq:F-lip-lin-growth}, the first term involving $F$ can be bounded by a constant times $\lambda_{N+1}^{-\frac{\kappa}{2}}$  and (using also Assumption~\ref{ass:reg}\ref{ass:derivative-bounded}) the second by a constant times $\int^t_0 \| X_N^M(\lfloor s \rfloor_{\tau}) - X_K^M(\lfloor s \rfloor_{\tau}) \|_{L^{10}(\Omega,H)} \dd s$. Next, we note that by~\eqref{eq:theta-G-bound},
	\begin{align*}
		&\E\Big[\Big(\int^t_0 \| S(t-s) G \big(X^M_N(\lfloor s \rfloor_{\tau} - X^M_K(\lfloor s \rfloor_{\tau}) \big) \|_{\cL_2(Q^{1/2}(H),H)}^2 \dd s\Big)^{5}\Big]^{\frac{1}{10}} \\
		&\quad \le C \Big(\int^t_0 (t-s)^{\delta-1}\| X_N^M(\lfloor s \rfloor_{\tau}) - X_K^M(\lfloor s \rfloor_{\tau}) \|_{L^{10}(\Omega,H)}^2 \dd s\Big)^{\frac 1 2},
	\end{align*}
	while the last term is treated by Assumption~\ref{ass:reg}\ref{ass:Q} and Lemma~\ref{lem:SP-estimates}\ref{lem:SP-estimates:semigroup-integral-bound-2}. As in the proof of \ref{lem:appr:space-reg}, an appeal to H\"older's inequality combined with Gronwall inequality finishes the proof.

	To show~\ref{lem:appr:neg-space-error}, we split the error in two parts,
	\begin{align*}
		\| X_N^M(t) - X_K^M(t) \|_{L^\frac{10}{3}(\Omega,\dot{H}^{-\kappa})} \le &\| (I-P_N) X_K^M(t) \|_{L^\frac{10}{3}(\Omega,\dot{H}^{-\kappa})} \\
		&\quad+ \| X_N^M(t) - P_N X_K^M(t) \|_{L^\frac{10}{3}(\Omega,\dot{H}^{-\kappa})}.
	\end{align*}
	By Lemmas~\ref{lem:SP-estimates}\ref{lem:SP-estimates:proj-bound} and~\ref{lem:appr}\ref{lem:appr:space-reg}, the first term is bounded by $C \lambda_{N+1}^{-\kappa}$. For the second term, we note that $Z := X^M_N - P_N X^M_K$ can be expressed by
	\begin{align*}
		Z(t) &= \int^t_0 S(t-s) P_N \big(F(X^M_N(\lfloor s \rfloor_{\tau})) - F(P_N X^M_K(\lfloor s \rfloor_{\tau}))\big) \dd s  \\
		&\quad+ \int^t_0 S(t-s) P_N \big(F(P_N X^M_K(\lfloor s \rfloor_{\tau})) - F(X^M_K(\lfloor s \rfloor_{\tau}))\big) \dd s  \\
		&\quad+ \int^t_0 S(t-s) G Z(\lfloor s \rfloor_{\tau}) \dd W(s).
	\end{align*}
	By Lemma~\ref{lem:SP-estimates}\ref{lem:SP-estimates:semigroup-analyticity}, it follows that
	\begin{align*}
		&\Big\|\int^t_0 S(t-s) P_N \big(F(P_N X^M_K(\lfloor s \rfloor_{\tau})) - F(X^M_K(\lfloor s \rfloor_{\tau}))\big) \dd s\Big\|_{L^{\frac{10}{3}}(\Omega,H)} \\
		&\quad\le \int^t_0 \| S(t-s) A^{\frac \eta 2}\|_{\cL(H)} \| A^{-\frac \eta 2} F(P_N X^M_K(\lfloor s \rfloor_{\tau})) - F(X^M_K(\lfloor s \rfloor_{\tau})) \|_{L^{\frac{10}{3}}(\Omega,H)} \dd s \\
		&\quad\le C  \int^t_0 (t-s)^{-\eta/2} \| F(P_N X^M_K(\lfloor s \rfloor_{\tau})) - F(X^M_K(\lfloor s \rfloor_{\tau})) \|_{L^{\frac{10}{3}}(\Omega,\dot{H}^{-\eta})} \dd s.
	\end{align*}
	Combining this with the mean value theorem and the BDG inequality along with Assumptions~\ref{ass:derivative-bounded} and \ref{ass:derivative-negnorm} and~\eqref{eq:theta-G-bound}, it follows that
	\begingroup
	\allowdisplaybreaks
	\begin{align*}
		&\|Z(t)\|_{L^\frac{10}{3}(\Omega,H)} \\
		&\hspace{0.4em}\le C \Bigg(\int^t_0 \Big\|\int^1_0 F'\big(X^M_K(\lfloor s \rfloor_{\tau}) + \rho Z(\lfloor s \rfloor_{\tau}) \big) Z(\lfloor s \rfloor_{\tau}) \dd \rho \Big\|_{L^\frac{10}{3}(\Omega,H)} \dd s  \\
		&\hspace{1.0em}+ \int^t_0 (t-s)^{-\eta/2} \Big\|\int^1_0 F'\big((\rho I+(1-\rho)P_N)X^M_K(\lfloor s \rfloor_{\tau})\big) (I-P_N)X^M_K(\lfloor s \rfloor_{\tau}) \dd \rho \Big\|_{L^\frac{10}{3}(\Omega,\dot{H}^{-\eta})} \dd s  \\
		&\hspace{1.0em}+ \E\Big[\Big(\int^t_0 \| S(t-s) G Z(\lfloor s \rfloor_{\tau}) \|_{\cL_2(Q^{1/2}(H),H)}^2 \dd s\Big)^{\frac{5}{3}}\Big]^{\frac{3}{10}} \Bigg)\\
		&\hspace{0.4em}\le C \Bigg(\int^t_0 \|Z(\lfloor s \rfloor_{\tau})\|_{L^\frac{10}{3}(\Omega,H)} \dd s \\
		&\hspace{1.0em}+ \lambda^{-\kappa}_{N+1} \int^t_0 (t-s)^{-\eta/2} \E\Big[\big|(1+\|X^M_K(\lfloor s \rfloor_{\tau})\|^{2}_{\dot{H}^\kappa}) \|X^M_K(\lfloor s \rfloor_{\tau})\|_{\dot{H}^\kappa} \big|^\frac{10}{3}\Big]^{\frac{3}{10}} \dd s \\
		&\hspace{1.0em}+ \Big(\int^t_0 (t-s)^{\delta-1}\|Z(\lfloor s \rfloor_{\tau})\|^2_{L^\frac{10}{3}(\Omega,H)} \dd s\Big)^{\frac{1}{2}} \Bigg).
	\end{align*}%
	\endgroup
	In the last inequality, we used the estimate $\|(I-P_N)X^M_K(\lfloor s \rfloor_{\tau})\|_{\dot{H}^{-\kappa}} \le \lambda^{-\kappa}_{N+1} \|X^M_K(\lfloor s \rfloor_{\tau})\|_{\dot{H}^\kappa}$.
	Now H\"older's inequality (with conjugate exponents $3$ and $3/2$ in the expectation of the middle term and $2$ for the first term), Lemma~\ref{lem:appr}\ref{lem:appr:space-reg}
	and the Gronwall inequality (first the discrete, then the continuous) proves the claim.

	For~\ref{lem:appr:time-error}, we again only derive the first bound. To do this, we bound the term $\| X_N(t) - X^M_N(t) \|_{L^{10}(\Omega,H)}$, where the semidiscrete approximation $X_N$ is defined by
	\begin{align*}
		X_N(t) &:= S(t) P_N X_0 + \int^t_0 S(t-s) P_N F(X_N(s)) \dd s \\
		&\quad+ \int^t_0 S(t-s) \big(P_N + G X_N(s) \big) \dd W(s), \quad t \in [0,T].
	\end{align*}
	The claim then follows from the triangle inequality and the fact that $L \ge M$. By adding and subtracting terms involving $X^M _N(s)$ and using the BDG inequality, we arrive at
	\begin{align*}
		&\| X_N(t) - X_N^M(t) \|_{L^{10}(\Omega,H)} \\
		&\quad= \Big\| \int^t_0 S(t-s) P_N \big(F(X_N(s)) - F(X^M_N(s))\big) \dd s \Big\|_{L^{10}(\Omega,H)} \\
		&\qquad+ \Big\| \int^t_0 S(t-s) P_N \big(F(X^M _N(s))-F(X^M_N(\lfloor s \rfloor_\tau))\big) \dd s \Big\|_{L^{10}(\Omega,H)} \\
		&\qquad+ \E\Big[\Big(\int^t_0 \| S(t-s) G \big(X_N(s) - X^M_N(s) \big) \|_{\cL_2(Q^{1/2}(H),H)}^2 \dd s\Big)^{5}\Big]^{\frac{1}{10}}\\
		&\qquad+ \E\Big[\Big(\int^t_0 \| S(t-s) G \big(X^M _N(s) - X^M_N(\lfloor s \rfloor_\tau) \big) \|_{\cL_2(Q^{1/2}(H),H)}^2 \dd s\Big)^{5}\Big]^{\frac{1}{10}}.
	\end{align*}
	The second and fourth term are bounded by a constant times  $\lambda^{\frac{\min(2\kappa,1)-\min(\kappa,1)}{2}}_N M^{-\min(2\kappa,1)/2}$, which follows from Lemma~\ref{lem:SP-estimates}\ref{lem:SP-estimates:semigroup-analyticity} combined with  Assumption~\ref{ass:reg}\ref{ass:derivative-bounded}, Lemma~\ref{lem:appr}\ref{lem:appr:time-reg} and~\eqref{eq:theta-G-bound}. Combining these same arguments with H\"older's inequality on the terms stemming from $F$, we arrive at
	\begin{align*}
		\| X_N(t) - X_N^M(t) \|^2_{L^{10}(\Omega,H)} &\le C \Bigg( \lambda^{\frac{\min(2\kappa,1)-\min(\kappa,1)}{2}}_N M^{-\min(2\kappa,1)} \\
		&\hspace{3em}+ \int^t_0 (t-s)^{\min(\kappa+\delta-1,0)} \| X_N(s) - X_N^M(s) \|^2_{L^{10}(\Omega,H)} \dd s \Bigg)
	\end{align*}
	and the Gronwall inequality finishes the proof.
\end{proof}

Using Lemma~\ref{lem:appr}\ref{lem:appr:space-error} and (a variant of) Lemma~\ref{lem:appr}\ref{lem:appr:time-error} one immediately obtains a bound on the second order difference $\cE^{L,M}_{K,N}(t)$ of \eqref{eq:second-order} by a constant times $\lambda_{N+1}^{-\kappa/2} + M^{-\min(\kappa,1)/2}$. As the following theorem shows, however, this can be significantly improved. We obtain three different regimes of convergence, depending on whether the smoothness parameter $\kappa$ fulfills $\kappa \in (0,1/2)$, $\kappa \in [1/2,1)$ or $\kappa \in [1,2)$. %

\begin{theorem}
	\label{thm:second-order-diff}
	Let $K\ge N$ and $L \ge M \in \N$, where $K = D N$ for some $D \in \N$. Under Assumption~\ref{ass:reg}, there is a constant $C < \infty$ that does not depend on  $K,L,M, N$ or $t \in [0,T]$, such that\begin{equation*}
		\|\cE^{L,M}_{K,N}(t)
		\|_{L^2(\Omega,H)} \le \begin{cases}
			C \min( M^{-\kappa},  \, \lambda_{N+1}^{-\kappa}) \quad &\text{ if } \kappa \in (0,1/2), \\
			C \min\Big( \lambda_{N+1}^{-(2\kappa-1)/2} M^{-1/2}, \,  \lambda_{N+1}^{-\kappa}\Big) \quad &\text{ if } \kappa \in [1/2,1), \\
			C \min( \lambda_{N+1}^{-\kappa/2} M^{-1/2}, \, \lambda_{N+1}^{-\kappa}) \quad &\text{ if } \kappa \in [1,2).
		\end{cases}
	\end{equation*}
\end{theorem}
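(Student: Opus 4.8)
The plan is to derive a mild equation for $\cE^{L,M}_{K,N}$ by subtracting the four exponential-integrator formulas and to exploit a structural cancellation. Since the scheme integrates the linear drift and the additive part of the noise exactly, the contributions $S(\cdot)P_\bullet X_0$ and $\int_0^{\cdot}S(\cdot-s)P_\bullet \dd W(s)$ enter the four solutions only through the projections $P_K,P_N,P_K,P_N$ with signs $+,-,-,+$ and therefore cancel identically. What remains is driven purely by the reaction term and the multiplicative noise,
\begin{equation*}
  \cE^{L,M}_{K,N}(\lfloor t\rfloor_{M^{-1}}) = I_F + I_G,
\end{equation*}
where $I_F$ and $I_G$ are the $S$-convolutions of the second-order differences of $P_\bullet F(X_\bullet^\bullet(\lfloor\cdot\rfloor))$ and of $GX_\bullet^\bullet(\lfloor\cdot\rfloor)$. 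This cancellation is exactly why $\cE$ can be an order smaller in space than a single difference $X_K-X_N$, whose $\lambda_{N+1}^{-\kappa/2}$ behaviour comes from the now-cancelled additive and initial parts; it also forces us to track the mismatch between the floor operators $\lfloor\cdot\rfloor_{L^{-1}}$ and $\lfloor\cdot\rfloor_{M^{-1}}$ appearing inside $I_F$ and $I_G$.

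For the pure-space bound $\lambda_{N+1}^{-\kappa}$ (the second entry of each minimum) I would not use the time discretisation at all, but measure the integrands of $I_F,I_G$ in the negative-order norm $\dot{H}^{-\eta}$ and recover $H$ through the smoothing $\|S(t-s)\|_{\cL(\dot{H}^{-\eta},H)}\lesssim(t-s)^{-\eta/2}$, which is integrable since $\eta<2$. Splitting the reaction integrand as $P_N(a-b)+(P_K-P_N)a$ with $a,b$ the time differences of $F$ at space levels $K,N$, the genuine second difference $a-b$ is handled by one mean value theorem together with Assumption~\ref{ass:reg}\ref{ass:derivative-negnorm} and the improved negative-norm estimate $\|X_K^\bullet-X_N^\bullet\|_{\dot{H}^{-\kappa}}\lesssim\lambda_{N+1}^{-\kappa}$ from Lemma~\ref{lem:appr}\ref{lem:appr:neg-space-error}, while the projection-difference piece $(P_K-P_N)a$ is controlled by the band-limited bound $\|(P_K-P_N)v\|_{\dot{H}^{-\eta}}\lesssim\lambda_{N+1}^{-(\eta+\kappa)/2}\|v\|_{\dot{H}^\kappa}$ applied to $a\in\dot{H}^\kappa$ (via Assumption~\ref{ass:reg}\ref{ass:difference-regularity-transfer} and Lemma~\ref{lem:appr}\ref{lem:appr:space-reg}). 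The noise term is treated identically through \eqref{eq:theta-G-bound}. A discrete-then-continuous Gronwall argument, as in the proof of Lemma~\ref{lem:appr}, closes this at size $\lambda_{N+1}^{-\kappa}$.

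The multiplicative bound (the first entry of each minimum) is the crux, and here the point is that the second-order difference of $F$ is a genuine mixed difference that should factor as a spatial increment times a temporal increment. I would make this precise by a twofold mean value theorem: expressing $X_K^L-X_N^L$ and $X_K^M-X_N^M$ through $F'$ and subtracting, then applying the mean value theorem to $F'$ (using $F\in\cG^2(H,\dot{H}^{-\eta})$), the leading contribution becomes $F''(\cdots)\big(X_N^L-X_N^M,\,X_K^M-X_N^M\big)$, a product of a time increment and a space increment, while the remainders are proportional to $\cE$ and feed the Gronwall loop. Bounding this product in $\dot{H}^{-\eta}$ by Assumption~\ref{ass:reg}\ref{ass:second-derivative-bounded} and Hölder, the decisive choice is to pair the $\lambda_N$-loaded time-regularity estimate $\|X_N^L-X_N^M\|\lesssim\lambda_N^{(\min(2\kappa,1)-\min(\kappa,1))/2}M^{-\min(2\kappa,1)/2}$ of Lemma~\ref{lem:appr}\ref{lem:appr:time-error} with the space bound $\lambda_{N+1}^{-\kappa/2}$ of Lemma~\ref{lem:appr}\ref{lem:appr:space-error}. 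Using $\lambda_N\eqsim\lambda_{N+1}$, this produces the single unified rate
\begin{equation*}
  \lambda_{N+1}^{(\min(2\kappa,1)-\min(\kappa,1)-\kappa)/2}\,M^{-\min(2\kappa,1)/2},
\end{equation*}
which specialises to $M^{-\kappa}$, to $\lambda_{N+1}^{-(2\kappa-1)/2}M^{-1/2}$, and to $\lambda_{N+1}^{-\kappa/2}M^{-1/2}$ in the three regimes $\kappa\in(0,1/2)$, $[1/2,1)$, $[1,2)$. The noise term is simpler because $G$ is linear: its double difference equals $G\cE$ up to the floor mismatch, producing a Gronwall term plus a source that is an increment of $X_K-X_N$ over one coarse step, again factoring into a spatial factor and a temporal increment controlled by \eqref{eq:theta-G-bound} and Lemma~\ref{lem:appr}\ref{lem:appr:time-reg}.

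I expect the main obstacle to be the bookkeeping that keeps the multiplicative structure intact. Two points are delicate. First, the reaction integrand carries projection-difference pieces $(P_K-P_N)\big(F(X_K^L)-F(X_K^M)\big)$ that are not directly of product form, because $F$ is only Lipschitz and controlled, but not Lipschitz, in $\dot{H}^{\kappa}$ (Assumption~\ref{ass:reg}\ref{ass:difference-regularity-transfer}); these must be rewritten and measured in $\dot{H}^{-\eta}$ so that, after smoothing, the surviving increments are again a spatial difference against a temporal difference. Second, the floor mismatch $\lfloor\cdot\rfloor_{L^{-1}}$ versus $\lfloor\cdot\rfloor_{M^{-1}}$ introduces mixed increments of $X_K-X_N$ requiring a joint space-time regularity estimate that carries both the spatial factor and the temporal factor $M^{-\min(2\kappa,1)/2}$. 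Once these are in place, the discrete-then-continuous Gronwall inequality closes the estimate, and taking the minimum of the two bounds gives the claim.
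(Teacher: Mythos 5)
Your proposal follows essentially the same route as the paper: the cancellation of the initial-condition and additive-noise contributions, the split of the reaction convolution into a projection-difference piece, a same-grid second difference handled by a twofold mean value theorem with $F''$ pairing the $\lambda_N$-loaded time increment of Lemma~\ref{lem:appr}\ref{lem:appr:time-error} against the spatial error of Lemma~\ref{lem:appr}\ref{lem:appr:space-error}, and a grid-mismatch piece requiring a separate space-time estimate (the paper's Lemma~\ref{lem:second-order-diff} on $\cH^{L,M}_{K,N}$), with the pure-space bound obtained via negative norms, Assumption~\ref{ass:reg}\ref{ass:derivative-negnorm}--\ref{ass:difference-regularity-transfer} and Lemma~\ref{lem:appr}\ref{lem:appr:neg-space-error}, all closed by Gronwall. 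The unified rate $\lambda_{N+1}^{(\min(2\kappa,1)-\min(\kappa,1)-\kappa)/2}M^{-\min(2\kappa,1)/2}$ you derive agrees with the paper's $\lambda_{N+1}^{-(\min(2\kappa,\kappa+1)-\min(2\kappa,1))/2}M^{-\min(2\kappa,1)/2}$ in all three regimes.
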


\begin{remark}
	Our assumptions on $F$ can be weakened if we content ourselves with deriving an upper bound of the form $ \lambda_{N+1}^{-(\min(2\kappa,\kappa+1)-\min(2\kappa,1))/2} M^{-\min(2\kappa,1)/2}$. Then, the negative norm bound of Lemma~\ref{lem:appr}\ref{lem:appr:neg-space-error} is not needed. As can be seen from the proof below, we do not make use of \ref{ass:reg}\ref{ass:difference-regularity-transfer} and need not apply Assumption~\ref{ass:reg}\ref{ass:derivative-negnorm} for $\kappa > 1$. When $F$ is a composition type mapping we therefore only need to assume twice differentiability of the underlying function $f$. %
\end{remark}

To show this claim, we need another lemma involving the second order difference
\begin{equation*}
	\cH^{L,M}_{K,N}(t) := X_K^L(\lfloor t\rfloor_{L^{-1}}) - X_N^L(\lfloor t\rfloor_{L^{-1}}) - X_K^L(\lfloor t\rfloor_{M^{-1}}) + X_N^L(\lfloor t\rfloor_{M^{-1}}).
\end{equation*}

\begin{lemma}
	\label{lem:second-order-diff}
	Under the same conditions as in Theorem~\ref{thm:second-order-diff}, there is a constant \( C < \infty \) that does not depend on \( K, L, M, N \), or \( t \in [0,T] \), such that
	\begin{equation*}
		\| \cH^{L,M}_{K,N}(t) \|_{L^4(\Omega, \dot{H}^{-\min(\kappa,1)})} \le
		\begin{cases}
			C M^{-\kappa}
			& \text{if } \kappa \in (0,1/2), \\
			C \lambda_{N+1}^{-(2\kappa - 1)/2} M^{-1/2}
			& \text{if } \kappa \in [1/2,1), \\
			C \lambda_{N+1}^{-\kappa/2} M^{-1/2}
			& \text{if } \kappa \in [1,2).
		\end{cases}
	\end{equation*}
	Moreover,
	\begin{equation*}
		\| \cH^{L,M}_{K,N}(t) \|_{L^2(\Omega, \dot{H}^{-\kappa})} \le C \lambda_{N+1}^{-\kappa}.
	\end{equation*}
\end{lemma}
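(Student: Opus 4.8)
The plan is to reduce the statement to a short-time increment of the spatial difference and then to trade temporal smoothing against spatial high-frequency decay. Write $Y := X_K^L - X_N^L$ and $t_1 := \lfloor t\rfloor_{M^{-1}}$, $t_2 := \lfloor t\rfloor_{L^{-1}}$, so that $\cH^{L,M}_{K,N}(t) = Y(t_2) - Y(t_1)$; since $L \ge M$ and the grids are nested, $t_1 \le t_2 \le t$ and $0 \le t_2 - t_1 \le M^{-1}$. Both $X_K^L$ and $X_N^L$ use the same floor $\lfloor\cdot\rfloor_{L^{-1}}$, so, using that $G$ is linear, $Y$ solves a mild equation with initial datum $(P_K - P_N)X_0$, drift $P_K F(X_K^L(\lfloor s\rfloor_{L^{-1}})) - P_N F(X_N^L(\lfloor s\rfloor_{L^{-1}}))$ and diffusion $(P_K - P_N) + G\,Y(\lfloor s\rfloor_{L^{-1}})$. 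The key structural observation is that the semigroup property makes all contributions accumulated before $t_1$ recombine, giving
\begin{equation*}
	Y(t_2) - Y(t_1) = E(t_2-t_1)Y(t_1) + \int_{t_1}^{t_2} S(t_2-s)\,[\mathrm{drift}]\dd s + \int_{t_1}^{t_2} S(t_2-s)\,[\mathrm{diff}]\dd W(s),
\end{equation*}
so only a single smoothing factor $E(t_2 - t_1)$ and two integrals over the short interval $[t_1,t_2]$ remain; in particular no Gr\"onwall argument is needed for the increment itself.

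The governing principle for all three terms is a budget argument: a factor $(t_2-t_1)^{c/2} \le M^{-c/2}$, produced either by $E(t_2-t_1)$ via Lemma~\ref{lem:SP-estimates}\ref{lem:SP-estimates:semigroup-time-bound} or by the short integration length, is traded against the high-frequency smallness of $Y$ and of $P_K - P_N$, which contributes $\lambda_{N+1}^{-(\kappa+\rho-c)/2}$ with $\rho := \min(\kappa,1)$; optimising $c \in [0,\min(2,\kappa+\rho)]$ yields the three regimes. For the leading term, interpolating Lemma~\ref{lem:appr}\ref{lem:appr:space-reg} and~\ref{lem:appr:space-error} gives $\|Y(t_1)\|_{L^4(\Omega,\dot{H}^{b})} \lesssim \lambda_{N+1}^{(b-\kappa)/2}$ for $b \in [0,\kappa]$, whence Lemma~\ref{lem:SP-estimates}\ref{lem:SP-estimates:semigroup-time-bound} gives $\|E(t_2-t_1)Y(t_1)\|_{L^4(\Omega,\dot{H}^{-\rho})} \lesssim M^{-c/2}\lambda_{N+1}^{(c-\rho-\kappa)/2}$ with $c - \rho \in [0,\kappa]$; the choices $c = 2\kappa$, $c = 1$, $c = 1$ in the regimes $\kappa \in (0,1/2)$, $[1/2,1)$, $[1,2)$ reproduce the claimed rates. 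Only nonnegative spatial indices of $Y$ enter here, so the fourth moment is available via Lemma~\ref{lem:appr}\ref{lem:appr:space-reg}--\ref{lem:appr:space-error}, which hold in $L^{10} \subset L^4$.

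The two short-interval integrals are then shown to beat the target. For the drift I would split into $(P_K - P_N)F(X_K^L)$ and $P_N[F(X_K^L) - F(X_N^L)]$. The first uses the exponential decay of the semigroup on high modes, $\|S(t_2-s)(P_K-P_N)A^{-\rho/2}\|_{\cL(H)} \le \lambda_{N+1}^{-\rho/2}e^{-\lambda_{N+1}(t_2-s)}$, together with $\int_{t_1}^{t_2} e^{-\lambda_{N+1}(t_2-s)}\dd s \le \min(M^{-1},\lambda_{N+1}^{-1}) \le M^{-a}\lambda_{N+1}^{-(1-a)}$ and the linear growth~\eqref{eq:F-lip-lin-growth}; the second is bounded crudely by the Lipschitz estimate~\eqref{eq:F-lip-lin-growth} in $H$ and Lemma~\ref{lem:appr}\ref{lem:appr:space-error}, giving $\lesssim M^{-1}\lambda_{N+1}^{-\kappa/2}$, which lies below the target in every regime. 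For the stochastic integral, the Burkholder--Davis--Gundy inequality reduces matters to a Hilbert--Schmidt time integral; the deterministic $(P_K - P_N)$ part is estimated mode by mode using $\int_{t_1}^{t_2} e^{-2\lambda_j(t_2-s)}\dd s \le \min(M^{-1},\lambda_j^{-1})$ and Assumption~\ref{ass:reg}\ref{ass:Q}, matching the target exactly, while the $G\,Y$ part is controlled by~\eqref{eq:theta-G-bound} and the negative-norm smallness of $Y$ and is of higher order. The second bound $\|\cH^{L,M}_{K,N}(t)\|_{L^2(\Omega,\dot{H}^{-\kappa})} \le C\lambda_{N+1}^{-\kappa}$ is immediate from the triangle inequality and Lemma~\ref{lem:appr}\ref{lem:appr:neg-space-error}, using $L^2 \subset L^{10/3}$.

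I expect the main obstacle to be not any individual estimate but the combined rate-and-moment bookkeeping required to obtain the \emph{multiplicative} rate $\lambda_{N+1}^{-\cdots}M^{-\cdots}$ rather than an additive one. Each contribution must correctly apportion its regularity budget between the temporal factor $M^{-c/2}$ and the spatial factor $\lambda_{N+1}^{-\cdots/2}$, and one must simultaneously respect the available moments: the sharp negative-norm smallness of $Y$ holds only in $L^{10/3}(\Omega)$ (Lemma~\ref{lem:appr}\ref{lem:appr:neg-space-error}), whereas the first claim is in $L^4(\Omega)$. This mismatch is avoided by using only the nonnegative-index regularity of $Y$, available in $L^{10}$, for the binding term $E(t_2-t_1)Y(t_1)$, and by exploiting the slack in the remaining, genuinely lower-order terms.
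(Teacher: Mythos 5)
Your proposal is correct, and while it shares the paper's central mechanism — using the semigroup property to write $\cH^{L,M}_{K,N}(t) = Y(t_2)-Y(t_1)$ (with $Y := X_K^L - X_N^L$, $t_1 = \lfloor t\rfloor_{M^{-1}}$, $t_2 = \lfloor t\rfloor_{L^{-1}}$) as a smoothed history term plus short-interval integrals — it organizes the estimate differently. The paper keeps the operator $E(t_2-t_1)$ distributed inside the history integrals, producing four separate terms ($\mathrm{I}^3_F$, $\mathrm{I}^4_F$, $\mathrm{I}^2_W$, plus the short-interval pieces) that are each estimated by juggling $\|A^{-1/2}E(\cdot)\|$, $\|A^{-\kappa/2}(P_K-P_N)\|$ and the bound~\eqref{eq:theta-G-bound}. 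You instead collapse the entire history into the single term $E(t_2-t_1)Y(t_1)$ and control it by interpolating the already-established a priori bounds of Lemma~\ref{lem:appr}\ref{lem:appr:space-reg} and~\ref{lem:appr:space-error}, i.e.\ $\|Y(t_1)\|_{L^4(\Omega,\dot{H}^{b})}\lesssim \lambda_{N+1}^{(b-\kappa)/2}$, then optimizing the split $c=\rho+b$; I checked that your choices $c=2\kappa,1,1$ reproduce all three regimes, that your mode-by-mode bound $\sum_{j>N}\mu_j\lambda_j^{-\rho}\min(M^{-1},\lambda_j^{-1})$ for the $(P_K-P_N)$ noise term matches the targets exactly, and that the remaining drift and $GY$ contributions are indeed dominated. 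Your treatment of the second claim — triangle inequality plus Lemma~\ref{lem:appr}\ref{lem:appr:neg-space-error} and $L^2\subset L^{10/3}$ — is a genuine shortcut: the paper re-derives that bound term by term, whereas it really is immediate from $\|Y(t_i)\|_{L^{10/3}(\Omega,\dot{H}^{-\kappa})}\le C\lambda_{N+1}^{-\kappa}$. What your route buys is economy and a cleaner separation between "a priori regularity of the spatial difference" and "temporal smoothing over one coarse step"; what the paper's route buys is that the same six-term expansion is reused verbatim in the proof of Theorem~\ref{thm:second-order-diff}, so no new a priori interpolation estimates need to be justified. The only shared blemish is the implicit assumption $\lfloor t\rfloor_{L^{-1}}\ge\lfloor t\rfloor_{M^{-1}}$ (true for nested grids $M\mid L$ as used in the MIMC hierarchy, but not for arbitrary $L\ge M$); this affects the paper's own write-up equally and is repaired by working with $|t_2-t_1|\le M^{-1}$ and swapping roles if needed.
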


\begin{proof}
	Throughout the proof, we let $C< \infty$ denote a generic constant which may change from line to line but does not depend on $K,L,M$ or $N$ or $t$.
	From the semigroup property, it follows that
	\begingroup
	\allowdisplaybreaks
	\begin{align*}
		&X_K^L(\lfloor t\rfloor_{L^{-1}}) - X_N^L(\lfloor t\rfloor_{L^{-1}}) - X_K^L(\lfloor t\rfloor_{M^{-1}}) + X_N^L(\lfloor t\rfloor_{M^{-1}}) \\
		&\quad= \int^{\lfloor t\rfloor_{L^{-1}}}_{\lfloor t\rfloor_{M^{-1}}}  P_K S(\lfloor t\rfloor_{L^{-1}}-s) (F(X_K^L(\lfloor s\rfloor_{L^{-1}})) - F(X_N^L(\lfloor s\rfloor_{L^{-1}}))) \dd s \\
		&\qquad+ \int^{\lfloor t\rfloor_{L^{-1}}}_{\lfloor t\rfloor_{M^{-1}}}  S(\lfloor t\rfloor_{L^{-1}}-s) (P_K-P_N) F(X_N^L(\lfloor s\rfloor_{L^{-1}})) \dd s \\
		&\qquad+ \int^{\lfloor t\rfloor_{M^{-1}}}_{0}  P_K S(\lfloor t\rfloor_{M^{-1}} - s) E(\lfloor t\rfloor_{L^{-1}}-\lfloor t\rfloor_{M^{-1}}) \\
		&\hspace{8em}\times(F(X_K^L(\lfloor s\rfloor_{L^{-1}})) - F(X_N^L(\lfloor s\rfloor_{L^{-1}}))) \dd s \\
		&\qquad+ \int^{\lfloor t\rfloor_{M^{-1}}}_{0}  S(\lfloor t\rfloor_{M^{-1}} - s) (P_K-P_N) E(\lfloor t\rfloor_{L^{-1}}-\lfloor t\rfloor_{M^{-1}}) F(X_N^L(\lfloor s\rfloor_{L^{-1}})) \dd s \\
		&\qquad+\int^{\lfloor t\rfloor_{L^{-1}}}_{\lfloor t\rfloor_{M^{-1}}} S(\lfloor t\rfloor_{L^{-1}}-s) \big(P_K-P_N + G\big(X_K^L(\lfloor s\rfloor_{L^{-1}})-X_N^L(\lfloor s\rfloor_{L^{-1}})\big) \big) \dd W(s) \\
		&\qquad+ \int^{\lfloor t\rfloor_{M^{-1}}}_{0}  S(\lfloor t\rfloor_{M^{-1}} - s) E(\lfloor t\rfloor_{L^{-1}}-\lfloor t\rfloor_{M^{-1}}) \\
		&\hspace{8em}\times \big(P_K-P_N + G\big(X_K^L(\lfloor s\rfloor_{L^{-1}})-X_N^L(\lfloor s\rfloor_{L^{-1}})\big) \big)  \dd W(s) \\
		&\quad=: \mathrm{I}^1_F + \mathrm{I}^2_F + \mathrm{I}^3_F + \mathrm{I}^4_F + \mathrm{I}^1_W + \mathrm{I}^2_W,
	\end{align*}%
	\endgroup
	We go through the terms one by one and show that each can be bounded in $L^4(\Omega,\dot{H}^{-\min(\kappa,1)})$ and $L^2(\Omega,\dot{H}^{-\kappa})$ by $C \lambda_{N+1}^{-(\min(2\kappa,\kappa+1)-\min(2\kappa,1))/2} M^{-\min(2\kappa,1)/2}$ and $C \lambda_{N+1}^{-\kappa}$, respectively. Note that $|\lfloor t \rfloor_{L^{-1}} - \lfloor t \rfloor_{M^{-1}}| \le M^{-1}$, a fact that will be used without explicit mention. This we also do for the inequalities corresponding to $\dot{H}^r \hookrightarrow \dot{H}^s, r > s$.

	On the one hand, by~\eqref{eq:F-lip-lin-growth}, Lemma~\ref{lem:SP-estimates}\ref{lem:SP-estimates:semigroup-analyticity}
	and Lemma~\ref{lem:appr}\ref{lem:appr:space-error},
	\begin{align*}
		\|\mathrm{I}^1_F\|_{L^4(\Omega,\dot{H}^{-\min(\kappa,1)})} &\le C M^{-1} \sup_{s \in [0,T]}\| X_K^L(\lfloor s\rfloor_{L^{-1}}) - X_N^L(\lfloor s\rfloor_{L^{-1}}) \|_{L^4(\Omega,H)} \\ &\le C M^{-1} \lambda_{N+1}^{-\kappa/2}.
	\end{align*}
	On the other hand, by Assumption~\ref{ass:reg}\ref{ass:derivative-negnorm}, the mean value theorem, H\"older's inequality (with conjugate exponents $5/2$ and $5/3$),
	Lemma~\ref{lem:SP-estimates}\ref{lem:SP-estimates:semigroup-analyticity} and Lemma~\ref{lem:appr}\ref{lem:appr:neg-space-error}, $\|\mathrm{I}^1_F\|_{L^2(\Omega,\dot{H}^{-\kappa})}$ may be bounded by
	\begin{align*}
		&\int^{\lfloor t\rfloor_{L^{-1}}}_{\lfloor t\rfloor_{M^{-1}}}  (\lfloor t\rfloor_{L^{-1}}-s)^{-\eta/2} \Big\|\int^1_0 F'\big((1-\rho)X_N^L(\lfloor s\rfloor_{L^{-1}})+\rho X_K^L(\lfloor s\rfloor_{L^{-1}})\big) \\
		&\hspace{16em}\times \big(X_K^L(\lfloor s\rfloor_{L^{-1}}) - X_N^L(\lfloor s\rfloor_{L^{-1}})\big) \dd \rho\|_{L^2(\Omega,\dot{H}^{-\eta})} \dd s \\
		&\quad \le C \sup_{s \in [0,T]}\| X_K^L(\lfloor s\rfloor_{L^{-1}}) - X_N^L(\lfloor s\rfloor_{L^{-1}}) \|_{L^{\frac{10}{3}}(\Omega,\dot{H}^{-\kappa})} \le C\lambda_{N+1}^{-\kappa}.
	\end{align*}

	Next, we note that
	\begin{align*}
		&\|\mathrm{I}^2_F\|_{L^4(\Omega,\dot{H}^{-s})} \\
		&\quad\le C \int^{\lfloor t\rfloor_{L^{-1}}}_{\lfloor t\rfloor_{M^{-1}}}  \|A^{\frac{r}{2}}S(\lfloor t\rfloor_{L^{-1}}-s) A^{-\frac{r+s}{2}}(P_K-P_N) F(X_N^L(\lfloor \rho\rfloor_{L^{-1}}))\|_{L^4(\Omega,H)} \dd \rho.
	\end{align*}
	Using~\eqref{eq:F-lip-lin-growth}, Lemma~\ref{lem:appr}\ref{lem:appr:space-reg} combined with Lemma~\ref{lem:SP-estimates}\ref{lem:SP-estimates:proj-bound} and \ref{lem:SP-estimates:semigroup-analyticity} yields a bound in terms of $M^{-1/2} \lambda_{N+1}^{-(\min(\kappa,1)+1)/2}$ when $s=\min(\kappa,1), r=1$ and $M^{-(1-\kappa/2)}\lambda_{N+1}^{-\kappa} \le \lambda_{N+1}^{-\kappa}$ when $s=r=\kappa$.

	Then, by Assumption~\ref{ass:reg}\ref{ass:derivative-bounded}, Lemma~\ref{lem:appr}\ref{lem:appr:space-error} and Lemma~\ref{lem:SP-estimates}\ref{lem:SP-estimates:semigroup-time-bound}-\ref{lem:SP-estimates:semigroup-analyticity}, we can on the one hand bound $\|\mathrm{I}^3_F\|_{L^4(\Omega,\dot{H}^{-\min(\kappa,1)})}$ by $\|\mathrm{I}^3_F\|_{L^4(\Omega,H)}$ and then via
	\begin{align*}
		&\|\mathrm{I}^3_F\|_{L^4(\Omega,H)} \le
		\int^{\lfloor t\rfloor_{M^{-1}}}_{0} (\lfloor t\rfloor_{M^{-1}} - s)^{-\frac 1 2}  \|A^{-\frac 1 2}E(\lfloor t\rfloor_{L^{-1}}-\lfloor t\rfloor_{M^{-1}}) \|_{\cL(H)} \\
		&\hspace{14em}\times\| X_K^L(\lfloor s\rfloor_{L^{-1}}) - X_N^L(\lfloor s\rfloor_{L^{-1}}) \|_{L^4(\Omega,H)} \dd s \le C M^{-1/2} \lambda_{N+1}^{-\kappa/2}.
	\end{align*}
	Repeating the argument from $\mathrm{I}^1_F$, we instead find that  $\|\mathrm{I}^{3}_F\|_{L^2(\Omega,\dot{H}^{-\kappa})} \le C \lambda_{N+1}^{-\kappa}$.

	We next use~\eqref{eq:F-lip-lin-growth}, Lemma~\ref{lem:appr}\ref{lem:appr:space-reg} and  Lemma~\ref{lem:SP-estimates}\ref{lem:SP-estimates:proj-bound}-\ref{lem:SP-estimates:semigroup-analyticity} to obtain a bound on the term $\|\mathrm{I}^4_F\|_{L^4(\Omega,\dot{H}^{-\min(\kappa,1)})}$ via
	\begin{align*}
		 &\int^{\lfloor t\rfloor_{M^{-1}}}_{0} (\lfloor t\rfloor_{M^{-1}} - s)^{-\frac{\kappa+1-\min(\kappa,1)}{2}} \|A^{- \frac \kappa 2}(P_K-P_N)\|_{\cL(H)} \\
		&\hspace{1em}\times\|A^{-\frac 1 2}E(\lfloor t\rfloor_{L^{-1}}-\lfloor t\rfloor_{M^{-1}}) \|_{\cL(H)} \|F(X_N^L(\lfloor s\rfloor_{L^{-1}}))\|_{L^4(\Omega,H)} \dd s \le C M^{-  1/2} \lambda_{N+1}^{- \kappa/ 2},
	\end{align*}
	whereas the bound  $\|\mathrm{I}^4_F\|_{L^2(\Omega,\dot{H}^{-\kappa})} \le C \lambda_{N+1}^{-\kappa}$ is obtained in the same way as for the term $\mathrm{I}^2_F$.

	The term $\|\mathrm{I}^1_W\|_{L^q(\Omega,\dot{H}^{-s})}$, $q \in \{2,4\}, s \in \{\kappa,1\},$ is split in two parts, using the BDG inequality, Assumption~\ref{ass:reg}\ref{ass:Q}, \eqref{eq:theta-G-bound} with $\theta = -s$ and $\chi=\min(2\kappa,1)-\delta$ and all parts of Lemma~\ref{lem:SP-estimates} to obtain a bound by
	\begin{align*}
		&\Big(\int^{\lfloor t\rfloor_{L^{-1}}}_{\lfloor t\rfloor_{M^{-1}}} \|A^{\frac{r}{2}} S(\lfloor t\rfloor_{L^{-1}}-\rho) A^{-\frac{\kappa+r+s-1}{2}} (P_K - P_N) A^{\frac{\kappa-1}{2}}\|_{\cL_2(Q^{1/2}(H),H)}^2 \dd \rho\Big)^{\frac 1 2} \\
		&\quad +  \E\Big[\Big(\int^{\lfloor t\rfloor_{L^{-1}}}_{\lfloor t\rfloor_{M^{-1}}}  \|S(\lfloor t\rfloor_{M^{-1}} - \rho)G\big(X_K^L(\lfloor \rho\rfloor_{L^{-1}})-X_N^L(\lfloor \rho\rfloor_{L^{-1}})\big)\|_{\cL_2(Q^{1/2}(H),\dot{H}^{-s})}^2 \dd \rho \Big)^{\frac q 2}\Big]^{\frac 1 q} \\
		&\quad\le C \Bigg( M^{- \frac{1-r}{2}} \| I_{Q^{1/2}(H) \hookrightarrow \dot{H}^{\kappa-1}} \|_{\cL_2(H,\dot{H}^{\kappa-1})} \lambda_{N+1}^{- \frac{ \kappa+r+s-1}{ 2}} \\
		&\hspace{3em}+ \Big(\int^{\lfloor t\rfloor_{L^{-1}}}_{\lfloor t\rfloor_{M^{-1}}} (\lfloor t\rfloor_{L^{-1}}-\rho)^{\min(2\kappa,1)-1} \| X_K^L(\lfloor \rho\rfloor_{L^{-1}}) - X_N^L(\lfloor \rho\rfloor_{L^{-1}}) \|_{L^q(\Omega,\dot{H}^{-\kappa-s+\chi})}^2 \dd \rho\Big)^{\frac 1 2}\Bigg).%
	\end{align*}
	For $q=4$ and $s=\min(\kappa,1)$ we use Lemma~\ref{lem:appr}\ref{lem:appr:space-error} and set $r=\max(0,1-2\kappa)$ so that
	$$\|\mathrm{I}^1_W\|_{L^4(\Omega,\dot{H}^{-\min(\kappa,1)})} \le C \lambda_{N+1}^{-(\min(2\kappa,\kappa+1)-\min(2\kappa,1))/2} M^{-\min(2\kappa,1)/2}.$$
	For $q=2$ and $s=\kappa$ we set $r=1$ in the first term and use Lemma~\ref{lem:appr}\ref{lem:appr:neg-space-error} to bound the second term by $\lambda_{N}^{-\kappa}$ (with $\chi=\kappa$) so that $\|\mathrm{I}^1_W\|_{L^2(\Omega,\dot{H}^{-\kappa})} \le C \lambda_{N+1}^{-\kappa}$. Similarly, applying~\eqref{eq:theta-G-bound} with \( \theta = 1 - r - s \) and \(\chi = 0\) yields that $\|\mathrm{I}^2_W\|_{L^q(\Omega,\dot{H}^{-s})}$ is bounded by a constant times
		\begin{align*}
			& \Big(\int^{\lfloor t\rfloor_{M^{-1}}}_0 \| A^{\frac 1 2} S(\lfloor t\rfloor_{M^{-1}} - \rho) A^{\frac{r-1}{ 2}}E(\lfloor t\rfloor_{L^{-1}}-\lfloor t\rfloor_{M^{-1}}) \\
			&\hspace{13em}\times A^{-\frac{\kappa+s+r-1}{2}} (P_K - P_N) A^{\frac{\kappa-1}{2}}\|_{\cL_2(Q^{1/2}(H),H)}^2 \dd \rho\Big)^{\frac 1 2} \\
			 &\, + \E\Big[\Big(\int^{\lfloor t\rfloor_{M^{-1}}}_0 \hspace{-0.8em}\rho^{\delta - 1} \|A^{\frac{ r-1 }{2} }E(\lfloor t\rfloor_{L^{-1}}-\lfloor t\rfloor_{M^{-1}})\|^2 \|X_K^L(\lfloor \rho\rfloor_{L^{-1}})-X_N^L(\lfloor \rho\rfloor_{L^{-1}})\|_{1-r-s-\kappa}^2 \dd \rho \Big)^{\frac q 2}\Big]^{\frac 1 q}.
		\end{align*}
	In the case $q=4$ and $s=\min(\kappa,1)$ we set \(r = \max(1-2\kappa,0)\) to obtain $\|\mathrm{I}^2_W\|_{L^4(\Omega,\dot{H}^{-\min(\kappa,1)})} \le C \lambda_{N+1}^{-(\min(2\kappa,\kappa+1)-\min(2\kappa,1))/2} M^{-\min(2\kappa,1)/2}$ while for $q=2$ and $s=\kappa$ we set $r=1$ to obtain $\|\mathrm{I}^2_W\|_{L^2(\Omega,\dot{H}^{-\kappa})} \le C \lambda_{N+1}^{- \kappa}$.
\end{proof}

\begin{proof}[Proof of Theorem~\ref{thm:second-order-diff}]
Throughout the proof, we let $C< \infty$ denote a generic constant which may change from line to line but does not depend on $K,L,M$ or $N$ or $t$. We recall that we want to derive derive the inequality
	\begin{equation*}
				\|\cE^{L,M}_{K,N}(t)
				\|_{L^2(\Omega,H)} \le
					C \min( \lambda_{N+1}^{-(\min(2\kappa,\kappa+1)-\min(2\kappa,1))/2} M^{-\min(2\kappa,1)/2},\lambda_{N+1}^{-\kappa}).
	\end{equation*}
	First, note that
	\begingroup
	\allowdisplaybreaks
	\begin{align*}
		\cE^{L,M}_{K,N}(t) &= \int^{\lfloor t \rfloor_{M^{-1}}}_0 P_K S(\lfloor t \rfloor_{M^{-1}}-s)
		F(X^L_K(\lfloor s \rfloor_{L^{-1}})) \dd s \\
		&\quad - \int^{\lfloor t \rfloor_{M^{-1}}}_0 P_N S(\lfloor t \rfloor_{M^{-1}}-s) F(X^L_N(\lfloor s \rfloor_{L^{-1}})) \dd s \\
		&\quad - \int^{\lfloor t \rfloor_{M^{-1}}}_0 P_K S(\lfloor t \rfloor_{M^{-1}}-s) F(X^M_K(\lfloor s \rfloor_{M^{-1}})) \dd s \\
		&\quad + \int^{\lfloor t \rfloor_{M^{-1}}}_0 P_N S(\lfloor t \rfloor_{M^{-1}}-s) F(X^M_N(\lfloor s \rfloor_{M^{-1}})) \dd s \\
		&\quad + \int^{\lfloor t \rfloor_{M^{-1}}}_0 S(\lfloor t \rfloor_{M^{-1}}-s) G\Big( X_K^L(\lfloor s\rfloor_{L^{-1}}) - X_N^L(\lfloor s\rfloor_{L^{-1}}) \\
		&\hspace{14em}- X_K^M(\lfloor s\rfloor_{M^{-1}}) + X_N^M(\lfloor s\rfloor_{M^{-1}}) \Big) \dd W(s).
	\end{align*}%
	\endgroup
	We rewrite this as
	\begingroup
	\allowdisplaybreaks
	\begin{align*}
		\cE^{L,M}_{K,N}(t) &= \int^{\lfloor t \rfloor_{M^{-1}}}_0 P_K (I-P_N) S(\lfloor t \rfloor_{M^{-1}}-s) \\
		&\hspace{5em}\times\big( F(X^L_K(\lfloor s \rfloor_{L^{-1}})) - F(X^M_K(\lfloor s \rfloor_{M^{-1}})) \big) \dd s \\
		&\quad+\int^{\lfloor t \rfloor_{M^{-1}}}_0 P_N S(\lfloor t \rfloor_{M^{-1}}-s) \big( F(X^L_K(\lfloor s \rfloor_{M^{-1}})) - F(X^L_N(\lfloor s \rfloor_{M^{-1}})) \\
		&\hspace{10em}- F(X^M_K(\lfloor s \rfloor_{M^{-1}})) + F(X^M_N(\lfloor s \rfloor_{M^{-1}})) \big) \dd s \\
		&\quad+\int^{\lfloor t \rfloor_{M^{-1}}}_0 P_N S(\lfloor t \rfloor_{M^{-1}}-s) \big( F(X^L_K(\lfloor s \rfloor_{L^{-1}})) - F(X^L_N(\lfloor s \rfloor_{L^{-1}})) \\
		&\hspace{10em}- F(X^L_K(\lfloor s \rfloor_{M^{-1}})) + F(X^L_N(\lfloor s \rfloor_{M^{-1}})) \big) \dd s \\
		&\quad + \int^{\lfloor t \rfloor_{M^{-1}}}_0 S(\lfloor t \rfloor_{M^{-1}}-s) G \cE^{L,M}_{K,N}(s) \dd W(s) \\
		&\quad + \int^{\lfloor t \rfloor_{M^{-1}}}_0 S(\lfloor t \rfloor_{M^{-1}}-s) G\cH^{L,M}_{K,N}(s) \dd W(s) \\
		&=: \mathrm{I}^1_F + \mathrm{I}^2_F + \mathrm{I}^3_F + \mathrm{I}^1_W + \mathrm{I}^2_W
	\end{align*}%
	\endgroup
	and proceed similarly as in Lemma~\ref{lem:second-order-diff}. Except for the last two, we derive two different bounds for each term, corresponding to the bounds $ C \lambda_{N+1}^{-(\min(2\kappa,\kappa+1)-\min(2\kappa,1))/2} M^{-\min(2\kappa,1)/2}$ and $C \lambda_{N+1}^{-\kappa}$ for $\|\cE^{L,M}_{K,N}(t)
	\|_{L^2(\Omega,H)}$.

	For the term $\|\mathrm{I}^1_F\|_{L^2(\Omega,H)}$, we first note that
	\begin{align*}
		\|\mathrm{I}^1_F\|_{L^2(\Omega,H)} &\le C \int^{\lfloor t \rfloor_{M^{-1}}}_0 \| A^{- \frac{\kappa+r}{2}} (I-P_N) \|_{\cL(H)} \|A^{\frac{\kappa}{2}} S(\lfloor t \rfloor_{M^{-1}}-s)\|_{\cL(H)} \\
		&\hspace{5em}\times\big\| F(X^L_K(\lfloor s \rfloor_{L^{-1}})) - F(X^M_K(\lfloor s \rfloor_{M^{-1}})) \big\|_{L^2(\Omega,\dot{H}^r)} \dd s.
	\end{align*}
	On the one hand, we set $r=0$ and treat the factor involving $F$ using the mean value theorem in combination with Assumption~\ref{ass:reg}\ref{ass:derivative-bounded}, Lemma~\ref{lem:appr}\ref{lem:appr:time-reg} and~\ref{lem:appr:time-error} to obtain a bound in terms of $$ \lambda_{N+1}^{-\kappa/2} \lambda_{K}^{(\min(2\kappa,1)-\min(\kappa,1))/2} M^{-\min(2\kappa,1)/2} \le C \lambda_{N+1}^{-(\min(2\kappa,\kappa+1)-\min(2\kappa,1))/2} M^{-\min(2\kappa,1)/2}. $$ Here we also made use of the coupling assumption $K = DN$. On the other hand, we set $r=\kappa$ and treat this factor using Assumption~\ref{ass:reg}\ref{ass:difference-regularity-transfer} to obtain a bound in terms of $ \lambda_{N+1}^{-\kappa}$.
	The two other factors are in both cases handled by Lemma~\ref{lem:SP-estimates}\ref{lem:SP-estimates:proj-bound} and~\ref{lem:SP-estimates:semigroup-analyticity}.

	To handle $\|\mathrm{I}^2_F\|_{L^2(\Omega,H)}$, we use, on the one hand, the mean value theorem a total of four times to obtain
	\begingroup
	\allowdisplaybreaks
	\begin{align*}
		&F(X^L_K(\lfloor s \rfloor_{M^{-1}})) - F(X^L_N(\lfloor s \rfloor_{M^{-1}})) - F(X^M_K(\lfloor s \rfloor_{M^{-1}})) + F(X^M_N(\lfloor s \rfloor_{M^{-1}})) \\
		&\hspace{0.5em}= F(X^L_K(\lfloor s \rfloor_{M^{-1}})) - F(X^L_N(\lfloor s \rfloor_{M^{-1}}) + X^M_K(\lfloor s \rfloor_{M^{-1}}) - X^M_N(\lfloor s \rfloor_{M^{-1}})) \\
		&\qquad+ \Big(\big(F(X^L_N(\lfloor s \rfloor_{M^{-1}}) + X^M_K(\lfloor s \rfloor_{M^{-1}}) - X^M_N(\lfloor s \rfloor_{M^{-1}})) - F(X^L_N(\lfloor s \rfloor_{M^{-1}}))\big) \\
		&\hspace{4em}- \big(F(X^M_K(\lfloor s \rfloor_{M^{-1}})) - F(X^M_N(\lfloor s \rfloor_{M^{-1}}))\big)\Big) \\
		&\hspace{0.5em}= \int^1_0 U_s(\lambda)  \cE^{L,M}_{K,N}(s) \dd \lambda\\
		&\quad+ \int^1_0 \int^1_0 \tilde U_s(\lambda, \tilde \lambda)  \big(X^L_N(\lfloor s \rfloor_{M^{-1}}) - X^M_N(\lfloor s \rfloor_{M^{-1}}) \big) \big(X^M_K(\lfloor s \rfloor_{M^{-1}}) - X^M_N(\lfloor s \rfloor_{M^{-1}}) \big) \dd \lambda \dd \tilde \lambda,
	\end{align*}%
	\endgroup
	where
	\begin{align*}
		U_s(\lambda) = F'\Big( (1-\lambda) \big(X^L_N(\lfloor s \rfloor_{M^{-1}}) + X^M_K(\lfloor s \rfloor_{M^{-1}}) - X^M_N(\lfloor s \rfloor_{M^{-1}})\big) + \lambda X^L_K(\lfloor s \rfloor_{M^{-1}})  \Big)
	\end{align*}
	and
	\begin{align*}
		\tilde U_s(\lambda, \tilde \lambda) = F''\Big(X^M_N(\lfloor s \rfloor_{M^{-1}}) &+ \lambda \big(X^L_N(\lfloor s \rfloor_{M^{-1}})-X^M_N(\lfloor s \rfloor_{M^{-1}})\big) \\
		&+ \tilde \lambda \big(X^M_K(\lfloor s \rfloor_{M^{-1}})-X^M_N(\lfloor s \rfloor_{M^{-1}})\big) \Big).
	\end{align*}
	Then, using Assumptions~\ref{ass:reg}\ref{ass:derivative-bounded} and~\ref{ass:second-derivative-bounded} along with  Lemma~\ref{lem:SP-estimates}\ref{lem:SP-estimates:semigroup-analyticity},  Lemma~\ref{lem:appr}\ref{lem:appr:space-error} and Lemma~\ref{lem:appr}\ref{lem:appr:time-error}, we see that
	\begin{align*}
		\|\mathrm{I}^2_F\|_{L^2(\Omega,H)} &\le C \Bigg( \int^{\lfloor t \rfloor_{M^{-1}}}_0 \|\cE^{L,M}_{K,N}(s)\|_{L^2(\Omega,H)} \dd s \\
		&\quad+ \int^{\lfloor t \rfloor_{M^{-1}}}_0 s^{-\eta/2} \|X^L_N(\lfloor s \rfloor_{M^{-1}}) - X^M_N(\lfloor s \rfloor_{M^{-1}})\|_{L^4(\Omega,H)} \\
		&\hspace{8em}\times\|X^M_K(\lfloor s \rfloor_{M^{-1}}) - X^M_N(\lfloor s \rfloor_{M^{-1}})\|_{L^4(\Omega,H)} \dd s \Bigg)\\ &\le C \Bigg(\int^{\lfloor t \rfloor_{M^{-1}}}_0 \|\cE^{L,M}_{K,N}(s)\|_{L^2(\Omega,H)} \dd s \\
		&\quad + \lambda_{N+1}^{-(\min(2\kappa,\kappa+1)-\min(2\kappa,1))/2} M^{-\min(2\kappa,1)/2}\Bigg).
	\end{align*}
	On the other hand, we can use the mean value theorem, H\"older's inequality and Assumption~\ref{ass:reg}\ref{ass:derivative-negnorm} in the split
	\begin{align*}
		&\| F(X^L_K(\lfloor s \rfloor_{M^{-1}})) - F(X^L_N(\lfloor s \rfloor_{M^{-1}})) - F(X^M_K(\lfloor s \rfloor_{M^{-1}})) + F(X^M_N(\lfloor s \rfloor_{M^{-1}})) \|_{L^2(\Omega,\dot{H}^{-\eta})} \\
		&\quad\le \| F(X^L_K(\lfloor s \rfloor_{M^{-1}})) - F(X^L_N(\lfloor s \rfloor_{M^{-1}}))\|_{L^2(\Omega,\dot{H}^{-\eta})} \\
		&\qquad+ \|F(X^M_K(\lfloor s \rfloor_{M^{-1}})) - F(X^M_N(\lfloor s \rfloor_{M^{-1}})) \|_{L^2(\Omega,\dot{H}^{-\eta})} \\
		&\quad\le C  \Big(\| X^L_K(\lfloor s \rfloor_{M^{-1}}) - X^L_N(\lfloor s \rfloor_{M^{-1}})\|_{L^\frac{10}{3}(\Omega,\dot{H}^{-\kappa})} \\
		&\hspace{4em}+ \|X^M_K(\lfloor s \rfloor_{M^{-1}}) - X^M_N(\lfloor s \rfloor_{M^{-1}}) \|_{L^\frac{10}{3}(\Omega,\dot{H}^{-\kappa})} \Big)
	\end{align*}
	so that by  Lemma~\ref{lem:SP-estimates}\ref{lem:SP-estimates:semigroup-analyticity} and Lemma~\ref{lem:appr}\ref{lem:appr:neg-space-error}, $\|\mathrm{I}^2_F\|_{L^2(\Omega,H)} \le C \lambda_{N+1}^{-\kappa}$.

	For the term $\mathrm{I}^3_F$, we argue in the same way as for $\mathrm{I}^2_F$: first, the mean value theorem yields
	\begingroup
	\allowdisplaybreaks
	\begin{align*}
          &F(X^L_K(\lfloor s \rfloor_{L^{-1}})) - F(X^L_N(\lfloor s \rfloor_{L^{-1}}))
		- F(X^L_K(\lfloor s \rfloor_{M^{-1}})) + F(X^L_N(\lfloor s \rfloor_{M^{-1}}))  \\
		&= \int^1_0 V_s(\lambda)  \cH^{L,M}_{K,N}(s) \dd \lambda\\
		&\qquad+ \int^1_0 \int^1_0 \tilde V_s(\lambda, \tilde \lambda)  \big(X^L_N(\lfloor s \rfloor_{L^{-1}}) - X^L_N(\lfloor s \rfloor_{M^{-1}}) \big) \big(X^L_K(\lfloor s \rfloor_{M^{-1}}) - X^L_N(\lfloor s \rfloor_{M^{-1}}) \big) \dd \lambda \dd \tilde \lambda,
	\end{align*}%
	\endgroup
	where
	\begin{align*}
	V_s(\lambda) = F'\Big( (1-\lambda) \big(X^L_N(\lfloor s \rfloor_{L^{-1}}) + X^L_K(\lfloor s \rfloor_{M^{-1}}) - X^L_N(\lfloor s \rfloor_{M^{-1}})\big) + \lambda X^L_K(\lfloor s \rfloor_{L^{-1}})  \Big)
	\end{align*}
	and
	\begin{align*}
	\tilde V_s(\lambda, \tilde \lambda) = F''\Big(X^L_N(\lfloor s \rfloor_{M^{-1}}) &+ \lambda \big(X^L_K(\lfloor s \rfloor_{M^{-1}})-X^L_N(\lfloor s \rfloor_{M^{-1}})\big) \\
	&+ \tilde \lambda \big(X^L_N(\lfloor s \rfloor_{L^{-1}})-X^L_N(\lfloor s \rfloor_{M^{-1}})\big) \Big).
	\end{align*}
	We now repeat the arguments for $\mathrm{I}^2_F$, replacing the use of  Lemma~\ref{lem:appr}\ref{lem:appr:time-error} with Lemma~\ref{lem:appr}\ref{lem:appr:time-reg}. We also apply Lemmas~\ref{lem:SP-estimates}\ref{lem:SP-estimates:semigroup-analyticity}, \ref{lem:appr}\ref{lem:appr:space-reg} and \ref{lem:second-order-diff}, along with Assumption~\ref{ass:reg}\ref{ass:derivative-negnorm}, to arrive at the bound
	\begin{align*}
		&\Big\| \int^{\lfloor t \rfloor_{M^{-1}}}_0 P_{N} S(\lfloor t \rfloor_{M^{-1}}-s) \int^1_0 V_s(\rho) \cH^{L,M}_{K,N}(s) \dd \rho \dd s \Big\|_{L^2(\Omega,H)} \\
		&\le C \int^{\lfloor t \rfloor_{M^{-1}}}_0 (\lfloor t \rfloor_{M^{-1}}-s)^{-\eta/2}\|\cH^{L,M}_{K,N}(s) \|_{L^4(\Omega,\dot{H}^{-\min(\kappa,1)})}  \\ &\hspace{4em}\times \big(1 + \max( \| X^L_N(\lfloor s \rfloor_{L^{-1}}) + X^L_K(\lfloor s \rfloor_{M^{-1}}) - X^L_N(\lfloor s \rfloor_{M^{-1}}) \|_{L^4(\Omega,\dot{H}^{\min(\kappa,1)})},\\
		&\hspace{10em}\| X^L_K(\lfloor s \rfloor_{L^{-1}}) \|_{L^4(\Omega,\dot{H}^{\min(\kappa,1)})}) \big)\dd s \\
		&\le C   \lambda_{N+1}^{-(\min(2\kappa,\kappa+1)-\min(2\kappa,1))/2} M^{-\min(2\kappa,1)/2}.
	\end{align*}
	Arguing in the same way as for $I^2_F$ we can also derive the bound $\|\mathrm{I}^3_F\|_{L^2(\Omega,H)} \le C \lambda_{N+1}^{-\kappa}$.

	Next, It\^o isometry and~\eqref{eq:theta-G-bound} yields an immediate bound
	\begin{align*}
	\| I^1_W \|_{L^2(\Omega,H)}^2 \le C \int^{\lfloor t \rfloor_{M^{-1}}}_0 (t-s)^{\delta-1} \| \cE^{L,M}_{K,N}(s) \|_{L^2(\Omega,H)}^2 \dd s,
	\end{align*}
	and similarly,
	\begin{equation*}
		\| I^2_W \|_{L^2(\Omega,H)}^2 \le C \sup_s \| \cH^{L,M}_{K,N}(s) \|_{L^2(\Omega,\dot{H}^{-\kappa})}^2.
	\end{equation*}
	Summing up, using also Lemma~\ref{lem:second-order-diff} and the H\"older inequality, we have shown on the one hand that
	\begin{align*}
		\| \cE^{L,M}_{K,N}(t) \|_{L^2(\Omega,H)}^2 &\le C \Bigg( \lambda_{N+1}^{-(\min(2\kappa,\kappa+1)-\min(2\kappa,1))/2} M^{-\min(2\kappa,1)/2} \\
		&\quad+ \int^{\lfloor t \rfloor_{M^{-1}}}_0 (t-s)^{\min(\kappa+\delta-1,0)} \| \cE^{L,M}_{K,N}(s) \|_{L^2(\Omega,H)}^2 \dd s\Bigg),
	\end{align*}
	and on the other hand that
	\begin{equation*}
		\| \cE^{L,M}_{K,N}(t) \|_{L^2(\Omega,H)}^2 \le C \Bigg(\lambda_{N+1}^{-2\kappa} + \int^{\lfloor t \rfloor_{M^{-1}}}_0 (t-s)^{\min(\kappa+\delta-1,0)}\| \cE^{L,M}_{K,N}(s) \|_{L^2(\Omega,H)}^2 \dd s\Bigg).
	\end{equation*}
	The proof is now completed by an appeal to the discrete Gronwall inequality.
\end{proof}

\begin{remark}
	\label{rem:remark-G-nemytskij}
	The reason for why we do not consider the composition operators $G$ discussed in Remark~\ref{rem:remark-G} is due to the fact that we would have to introduce a truncation of the noise. After applying the same type of mean value theorem arguments as for $F$, we would end up with terms like
	\begin{equation*}
		\sum_{j=1}^{\infty} \int^{\lfloor t \rfloor_{M^{-1}}}_{0} \| S(\lfloor t \rfloor_{M^{-1}} - s) G'(Y_s)\big(X^L_N(\lfloor s \rfloor_{L^{-1}})-X^L_N(\lfloor s \rfloor_{M^{-1}}),(P_N-P_K)\mu^{1/2}_j e_j\big)\|^2 \dd s,
	\end{equation*}
	where $Y_s$ is a term with the same regularity as $X^L_N$ and $(G'(u)(v,w))(x) = g'(u(x))v(x)w(x)$ for $u,v,w \in H$ and almost every $x \in \cD$. The main tool we have to treat this term is Proposition~\ref{prop:F-Fdoubleprime}. This introduces a restriction to spatial dimension $d=1$ regardless of how smooth the derivative $g'$ of $g$ is assumed to be. Even then, it is not possible to obtain product bounds of the form $\lambda_{N+1}^{-\kappa/2} M^{-1/2}$ or better, due to the fact that a negative norm would have to be used on $(P_N-P_K)\mu^{1/2}_j e_j$. This would necessitate a positive norm bound on $X^L_N(\lfloor s \rfloor_{L^{-1}})-X^L_N(\lfloor s \rfloor_{M^{-1}})$ which would decrease the temporal convergence rate.
\end{remark}

\section{Numerical examples} \label{sec:numerical_examples}
In this section we numerically study the convergence properties,
approximation error and cost for MIMCEI and MLMCEI applied to two
SPDEs in settings where Assumption~\ref{ass:reg} holds with $\kappa=1$
and Assumption~\ref{ass:qoi} holds with $\psi \in \cL(H,U)$ and $U=H$.
The proof of Theorem~\ref{thm:mimc-cost-error} then shows that
the performance of MIMCEI is critically linked to the
multiplicative convergence property
\[
\|\Delta_{\boldEll} X \|_{L^2(\Omega,H)}
\lesssim  N_{\ell_2}^{-\beta_2/2} \min(M_{\ell_1}^{-\beta_1/2}, N_{\ell_2}^{-\beta_2/2})
\eqsim 2^{-\max(\beta_1 \ell_1/2 + \beta_2  \ell_2/2, \,  \beta_2  \ell_2)},
\]
with the double difference $\Delta_{\boldEll} X$
defined in~\eqref{eq:mimcei} (here for the case $\Psi(X) = X(T)$).
We will numerically estimate the rates $(\beta_1, \beta_2)$, and show that the results are consistent with the theoretical rates in
  Lemma~\ref{lem:strong-error-coupling} with $\kappa=1$ and the identity mapping $\psi \in \cL(H)$. For this purpose, we let
\[
  e_F(\ell_1,\ell_2) := \sqrt{\sum_{i=1}^{m} \frac{ \|\Delta_{\boldEll} X^{(i)} \|^2}{m} }
\]
be a Monte Carlo approximation of $\|\Delta_{\boldEll} X\|_{L^2(\Omega,H)}$
using $m=10^4$ iid samples, and numerically
validate~\eqref{eq:strong-error-coupling-assumption}
by finding a function
\[
  p(\ell_1,\ell_2) :=  2^{- \max(\bar \beta_1 \ell_1/2 +  \bar \beta_2 \ell_2/2 + c_1, \, \bar \beta_3 \ell_2 + c_2)}
\]
with parameters $(\bar \beta_1, \bar \beta_2, \bar \beta_3)$ that are greater than or approximately equal to $\beta_1, \beta_2$ and $\beta_2$, respectively, and for which it
additionally holds that $p$ dominates $e_F$, meaning $p \ge e_F$. We interpret
this as numerical validation of $e_F \lesssim 2^{- \max(\beta_1 \ell_1/2 + \beta_2 \ell_2/2, \, \beta_2 \ell_2)}$.

\subsection{Linear drift and multiplicative noise} \label{subsec:linDriftMultNoise}
We consider the linear SPDE
\begin{equation}\label{eq:ex1_linSpde}
  dX(t) = (-A X(t) + X(t)) dt + (I + G X(t)) dW(t) \qquad t \in (0,1]
\end{equation}
where $A$ has eigenpairs $((\lambda_k,e_k))_{k\ge1}$ with $\lambda_k = 0.2 k^{4/3}$.
The $Q$-Wiener process has coefficients
$\mu_{k} = k^{-1.01}$ and the initial condition is
\[
X(0) = \sum_{k=1}^\infty \xi_k k^{-2} e_k
\]
where $\xi_k \stackrel{iid}{\sim}N(0,1)$, and
$(Gu)v = \sum_{j = 1}^\infty \zeta_j \langle u, e_{j+1}\rangle \langle v, e_j \rangle e_j$
 with $\zeta_k = \mu_k^{-1/2}$. This is a problem setting with
$\kappa=1$ and double difference convergence rates $\beta_1=1$ and
$\beta_2 = \kappa \nu =
4/3$. Figure~\ref{fig:linDriftMultNoise_nu43_fit1} shows that
the numerical estimate of $\|\Delta_{\boldEll}
X\|_{L^2(\Omega,H)}$ is consistent with the rates predicted by
Lemma~\ref{lem:strong-error-coupling}.
\begin{figure}
  \center
  \includegraphics[width=0.85\textwidth]{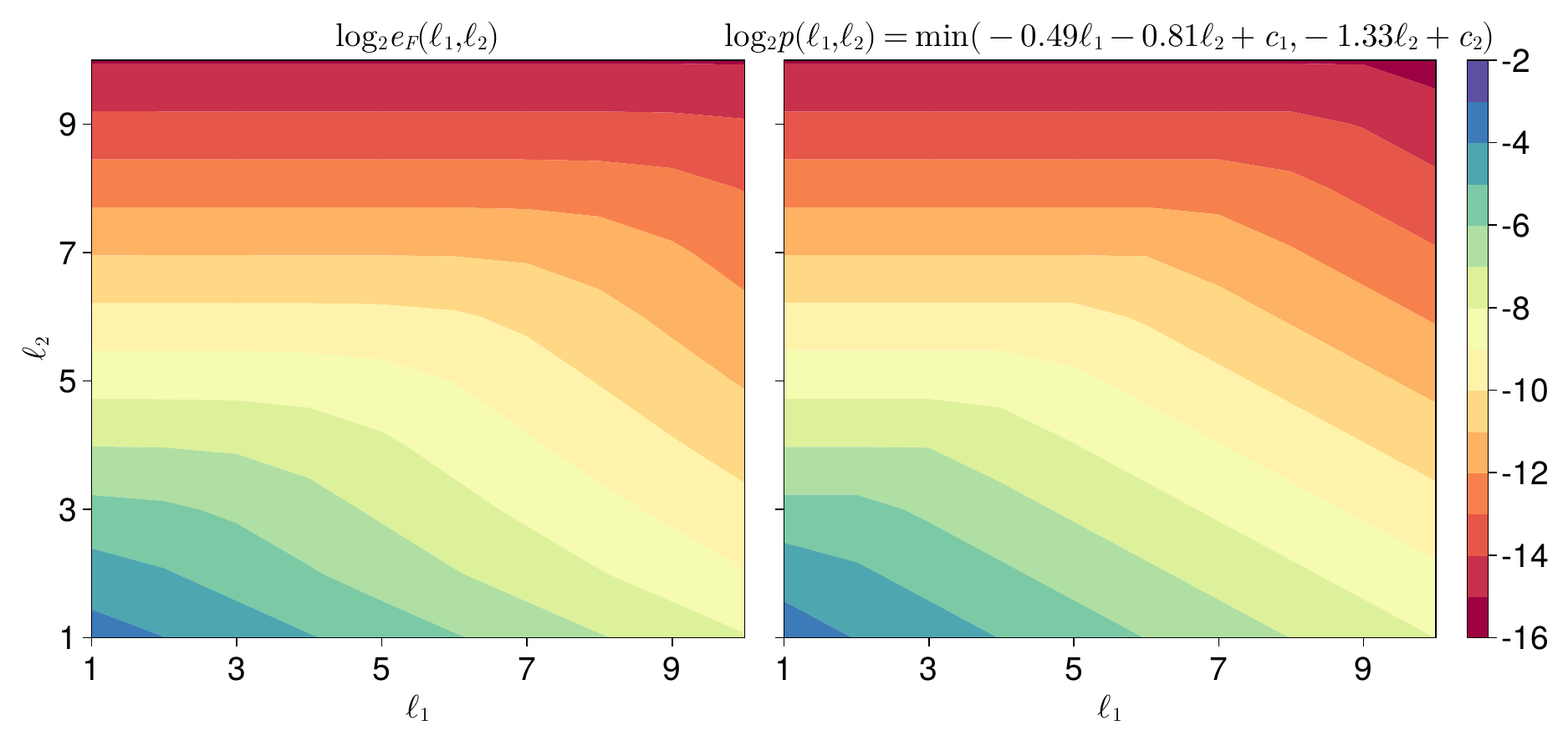}
  \caption{\textbf{Left:} Estimate of the double difference convergence rate for the SPDE~\eqref{eq:ex1_linSpde}.
  \textbf{Right:} A function $p(\ell_1,\ell_2) \approx \min(2^{- \beta_1 \ell_1/2 -  \beta_2 \ell_2/2 +c_1}, 2^{- \beta_2 \ell_2+c_2})$
  that appears to be a good global approximation of $e_F$.}
  \label{fig:linDriftMultNoise_nu43_fit1}
\end{figure}

MIMCEI is implemented with the parameter values in the proof of
Lemma~\ref{lem:general-mimc-cost-error} with $\beta_2>\beta_1=1$ and the possibly
non-sharp weak rates $\alpha_j = \beta_j/2$. More particularly, we choose the index set
of the MIMCEI estimator as
\begin{equation}\label{eq:index-set-shape}
  \cI \equiv \cI_{L} := \{\boldEll \in \N_0^2 \mid \xi_1 \ell_1 + \xi_2 \ell_2 \le L \} \quad
  \text{with} \quad  \xi_j := \alpha_j  + (1-\beta_j)/10\,.
\end{equation}
with \(L = \max(\lceil \log_2(\varepsilon^{-1}) \rceil , 1)\) for a given error tolerance \(\varepsilon\).
We also set the number of samples at level \(\boldEll \in \cI_{L}\),
\[
  m_{\boldEll} \eqsim \left\lceil \frac{L \varepsilon^{-2}}{2^{\ell_{1} (1 + \beta_1)/2}
      2^{\ell_{2}(1+\beta_2)/2}} \right\rceil, \qquad \boldEll \in \cI.
\]
which is the same choice as \eqref{eq:ml-general}, up to a constant, since
\(\beta_{1}=1, \beta_{2}>1\). On the other hand, MLMCEI is implemented with the
parameter values in the proof of Theorem~\ref{thm:mlmc-cost-error}, also with
a possibly non-sharp weak rate, $\overline{\alpha} = 1/2$. Underestimation of
$\overline{\alpha}$ would affect the performance of MLMCEI negatively, but it is a
challenging and intractable problem to estimate weak convergence rates for
parabolic SPDEs. Our crude numerical estimates of the weak rate,
which we will not detail here, indicate that it indeed is close to
$1/2$ for this problem setting. For further details on parameters and the
computer code, see the Julia programming language implementation
in~\url{https://github.com/haakonahMatata/SEMIMCLINEAR_SPDE}.

Figure~\ref{fig:linDriftMultNoise_nu43_costVsError} shows that the two errors $\|\MIMCest - \E[X(1)]\|^2_{L^2(\Omega)}$ and ${\|\MLMCest
  - \E[X(1)]\|^2_{L^2(\Omega)}}$, for one realization of MIMCEI and MLMCEI each,
are both $\cO(\varepsilon^2)$ and that the computational cost of both methods grows with
rates that are consistent with theory. The reference solution is $\E[X(1)] =0$ for this
SPDE. Figure~\ref{fig:linDriftMultNoise_nu2_costVsError} shows a similar
performance comparison for the SPDE~\eqref{eq:ex1_linSpde} in the smoother
setting when $\nu =2$, where we expect a smaller performance discrepancy due to
the improved regularity.
\begin{figure}
  \center
  \includegraphics[width=0.49\textwidth]{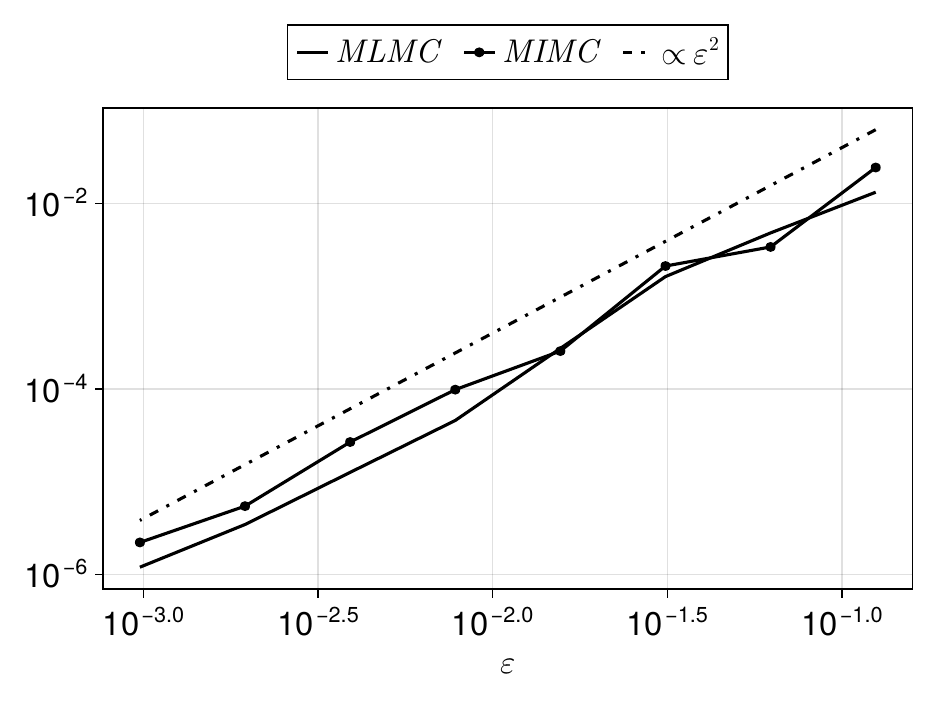}
  \includegraphics[width=0.49\textwidth]{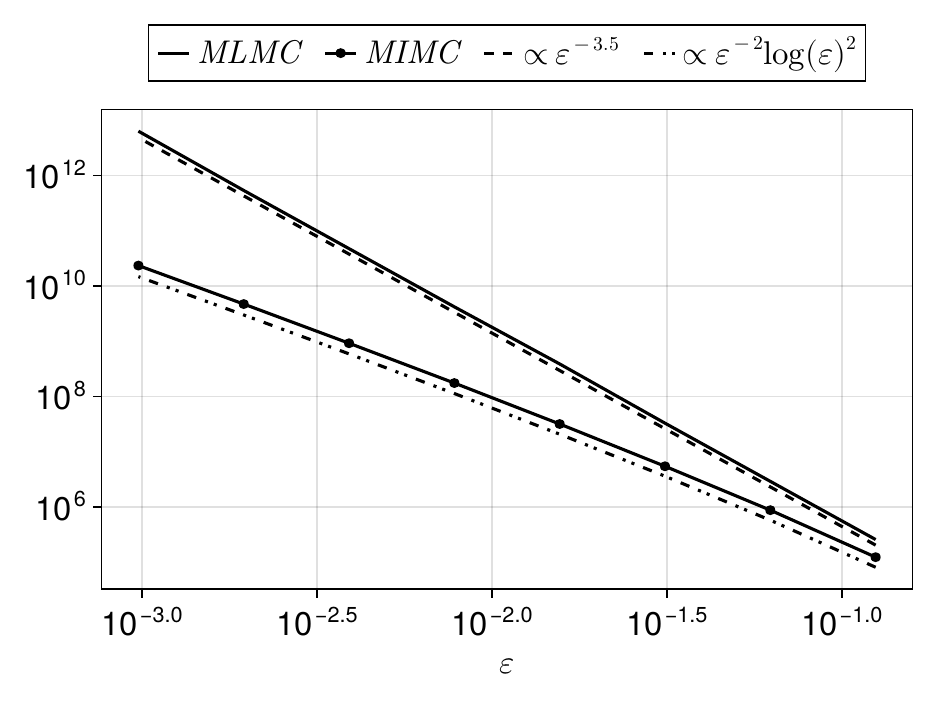}
  \caption{Comparison of performance of MIMCEI and MLMC for the
    SPDE~\eqref{eq:ex1_linSpde} with $\nu =4/3$.  \textbf{Left:}
    Approximation error. \textbf{Right:} Computational cost.}
  \label{fig:linDriftMultNoise_nu43_costVsError}
\end{figure}
\begin{figure}
  \center
  \includegraphics[width=0.49\textwidth]{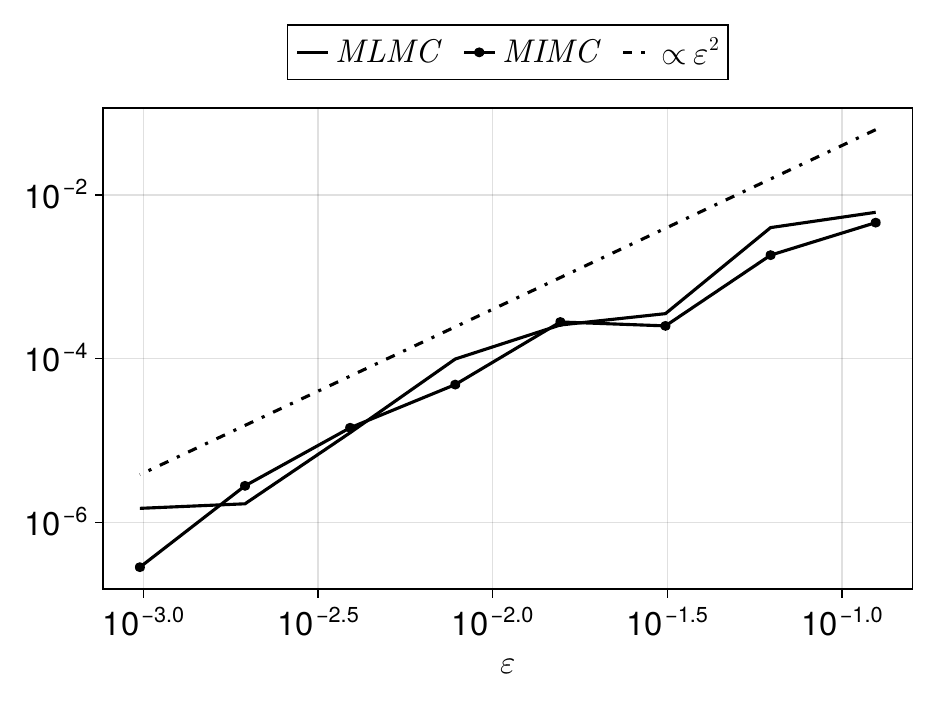}
  \includegraphics[width=0.49\textwidth]{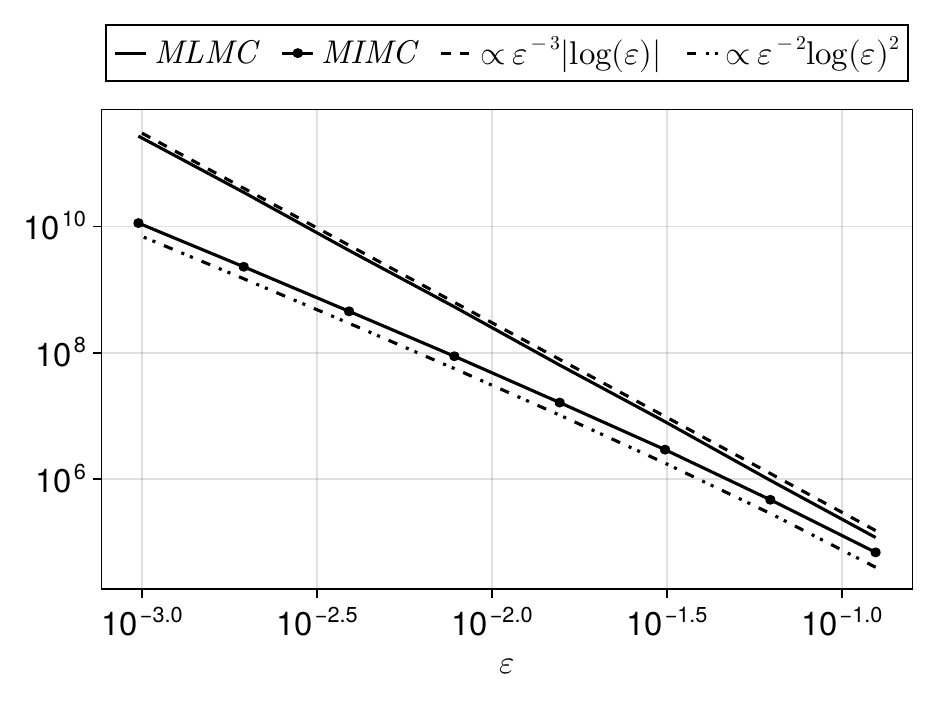}
  \caption{Comparison of performance of MIMCEI and MLMC for the SPDE~\eqref{eq:ex1_linSpde}
  with $\nu =2$.
  \textbf{Left:} Approximation error. \textbf{Right:} Computational cost.}
  \label{fig:linDriftMultNoise_nu2_costVsError}
\end{figure}

Lastly, we compare the performance of MIMCEI in
Theorem~\ref{thm:mimc-cost-error} with the reduced-samples MIMCEI method,
as mentioned in Remark~\ref{rem:reduced-samples}, in the low-regularity setting $\nu=10/9$.
For the given SPDE, the improved spatial convergence rate is
$\beta_3 = 2 \kappa \nu$. Both MIMCEI methods are implemented with the
possibly non-sharp weak rates $\alpha_1 =1/2$, $\alpha_2=\kappa \nu/2$ and $\alpha_3 = \kappa \nu$, and
MLMCEI is implemented with $\overline{\alpha} = 1/2$. In alignment with theory, the
numerical tests in Figure~\ref{fig:linDriftMultNoise_nu10Over9_costVsError}
show that the reduced-samples MIMCEI method asymptotically is roughly $10\%$
faster than the other one, that both MIMCEI methods are considerably faster
than MLMCEI, and that all methods estimate $\E[X(1)]$ with similar accuracy.
\begin{figure}
  \center
  \includegraphics[width=0.49\textwidth]{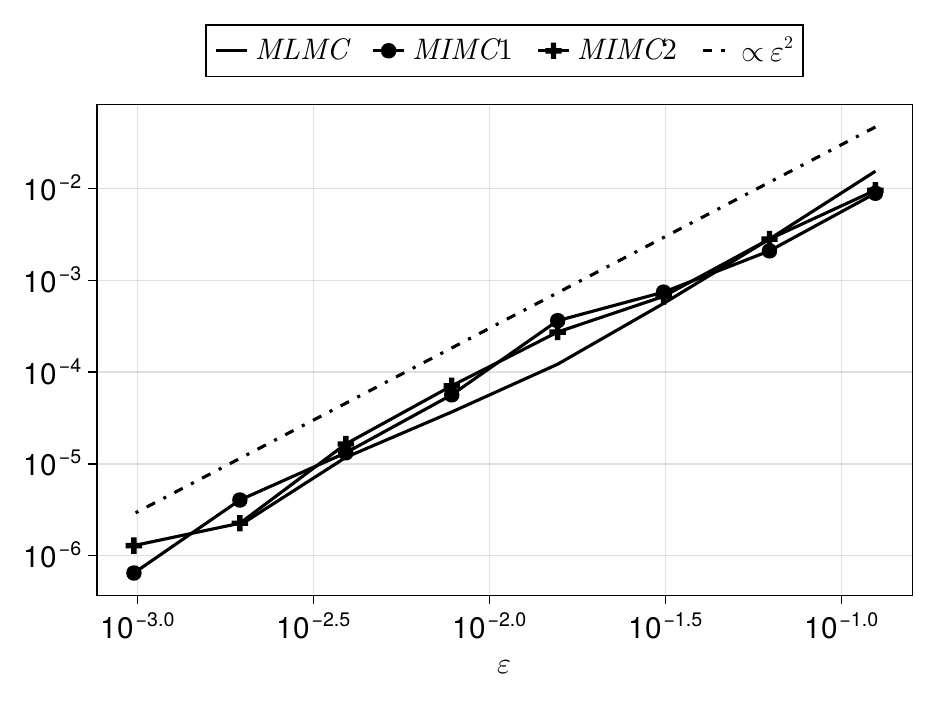}
  \includegraphics[width=0.49\textwidth]{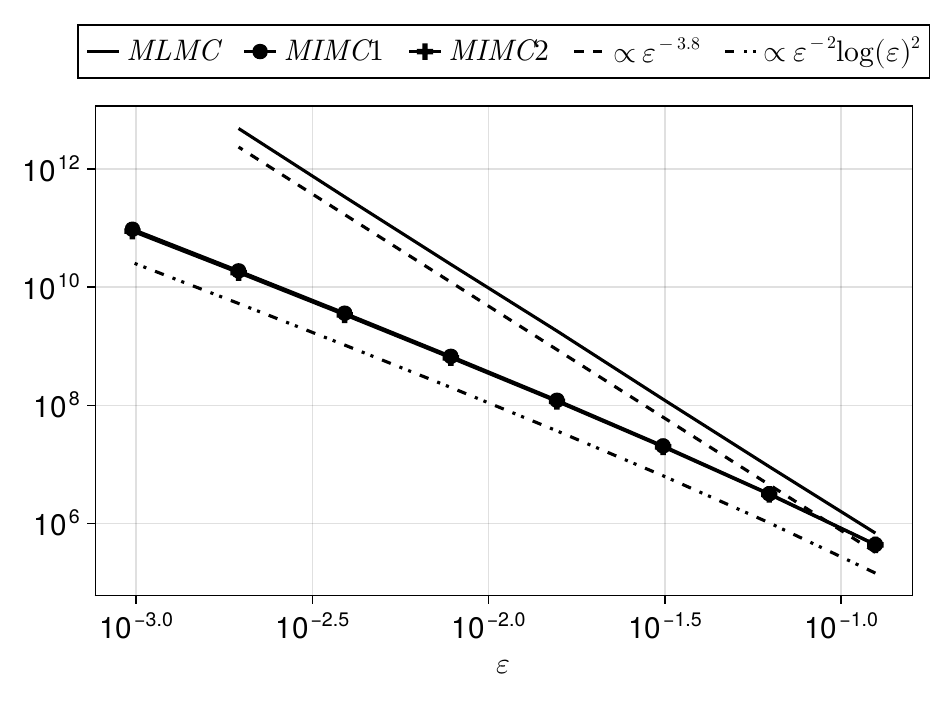}
  \caption{Comparison of performance of standard MIMCEI (labeled MIMC1),
    reduced-samples MIMCEI (labeled MIMC2), and MLMC applied to
    the SPDE~\eqref{eq:ex1_linSpde} with $\nu =10/9$.  \textbf{Left:}
    Approximation error. \textbf{Right:} Computational cost.}
  \label{fig:linDriftMultNoise_nu10Over9_costVsError}
\end{figure}

\subsection{Nonlinear drift and multiplicative noise}
We consider the linear SPDE
\begin{equation}\label{eq:ex2_nonlinearSpde}
dX(t) = (-A X(t) + F(X(t))) dt + (I + G X(t)) dW(t) \qquad t \in (0,1]
\end{equation}
on $H = L^2([0,1])$ where $A$ has eigenpairs $((\lambda_k,e_k))_{k\ge1}$
with $\lambda_k = k^{\nu}$, Fourier eigenbasis $e_k$, Nemytskii
operator $F(X(t,\cdot))(x) = \sin(X(t,x)) + X(t,x)$.  The $Q$-Wiener
process has coefficients $\mu_{k} = k^{-1.0001}$ and $X_0(x) =
2\min(x,1-x) \in \dot{H}^1$ (where this holds true for $\kappa =1$
combined with the two eigenvalue-exponent values $\nu \in \{4/3, 2\}$
that we consider in this example), and $(Gu)v = \sum_{j = 1}^\infty
\zeta_j \langle u, e_{j+2}\rangle \langle v, e_j \rangle e_j$ with
$\zeta_k = \mu_k^{-1/2}$. We use the fast Fourier transform (FFT) and
the inverse FFT to evaluate $F(X(t))$ in the numerical method.
See~\cite{CHLNT21} for details on FFT, and we note that possible error
contributions from approximating $F(X(t))$
by FFT/IFFT is not included in our theory. This is a setting with
$\kappa=1$ and double difference convergence rates $\beta_1=1$ and
$\beta_2 = \kappa \nu = 4/3$.

Figure~\ref{fig:nonlinearExampleFit1} shows that the numerical estimate of
$\|\Delta_{\boldEll} X\|_{L^2(\Omega,H)}$ is
consistent with theoretical convergence rates.
\begin{figure}
  \center
  \includegraphics[width=0.85\textwidth]{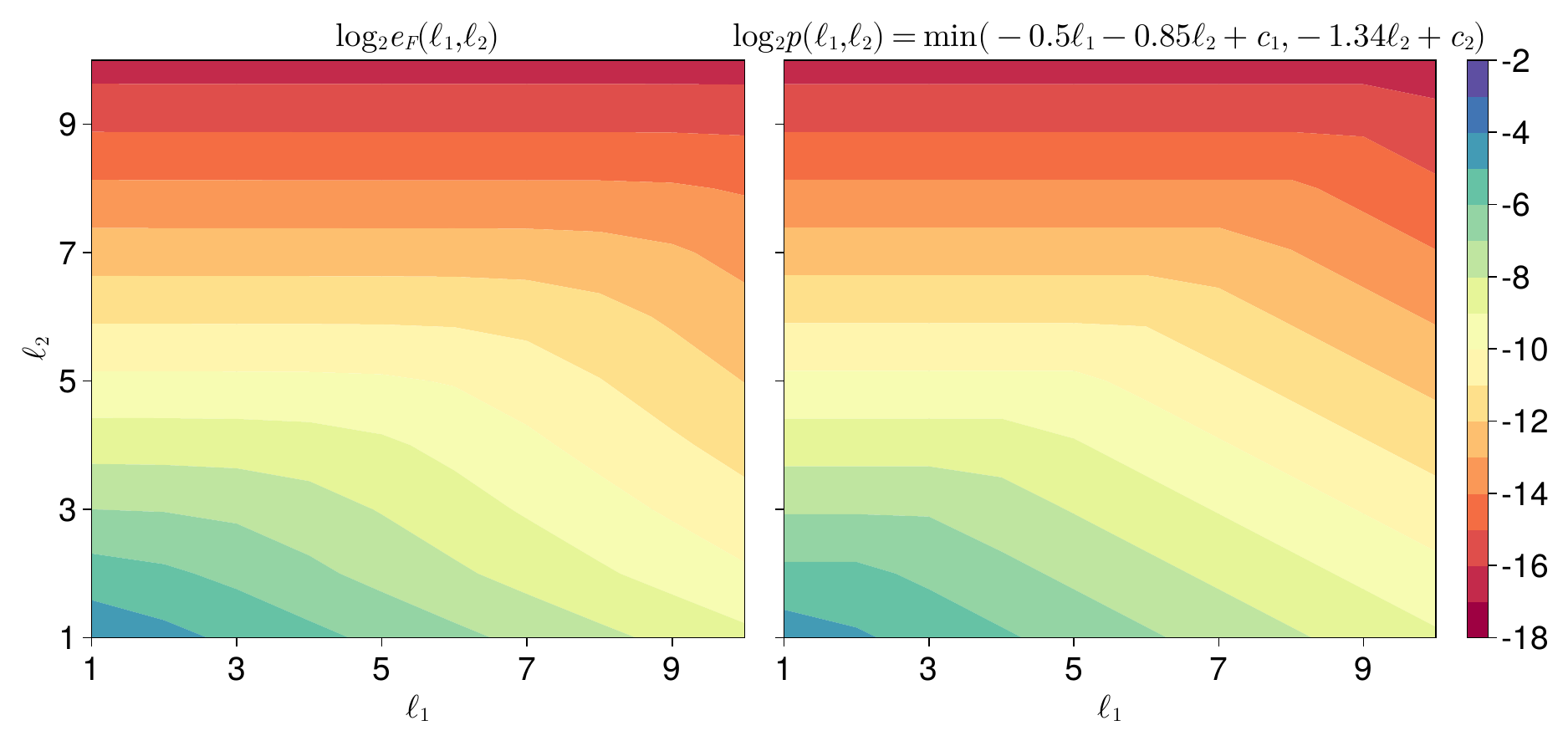}
  \caption{\textbf{Left:} Estimate of the double difference convergence rate for the SPDE~\eqref{eq:ex2_nonlinearSpde}.
  \textbf{Right:} A function $p(\ell_1,\ell_2) \approx \min(2^{- \beta_1 \ell_1/2 -  \beta_2 \ell_2/2 +c_1}, 2^{- \beta_2 \ell_2 +c_2})$
  that is a good global approximation of $e_F$.}
  \label{fig:nonlinearExampleFit1}
\end{figure}
Figure~\ref{fig:nonlinearShifted_nu43_costVsError} shows that the
approximation errors $\|\MIMCest - \E[X(1)]\|^2$ and $\|\MLMCest -
\E[X(1)]\|^2$ for one realization of MIMCEI and MLMCEI each,
both are $\cO(\varepsilon^2)$ and
that the computational cost has the growth rates expected from theory. In all
experiments for this SPDE, a pseudo-reference solution of $\E[X(1)]$ is
computed by MIMCEI with error tolerance $\varepsilon = 2^{-12}$.
Figure~\ref{fig:nonlinearShifted_nu2_costVsError} shows a similar performance
comparison for the SPDE~\eqref{eq:ex2_nonlinearSpde} in the smoother setting
when $\nu =2$, where we observe, as expected, a smaller discrepancy in
performance between the two methods. For these performance comparisons, let us
however note that MIMCEI is implemented with the possibly non-sharp weak rates
$\alpha_i = \beta_i/2$ and MLMCEI with $\overline{\alpha} = 1/2$. For the sake of fair
comparisons, crude numerical estimates indicate that the true weak convergence
rate may be near $1$ for MLMCEI. If one were to use $\overline{\alpha} =1$ instead
of $1/2$, then the computational cost of MLMCEI would reduce from $\cO(\varepsilon^{-2
  -2/(\kappa \nu))})$ to $\cO(\varepsilon^{-2 - 1/(\kappa \nu)})$. This is an improvement one should
exploit if one were able to estimate the weak rate reliably, but MLMCEI would
regardless still be notably less tractable than MIMCEI.

\begin{figure}
  \center
  \includegraphics[width=0.49\textwidth]{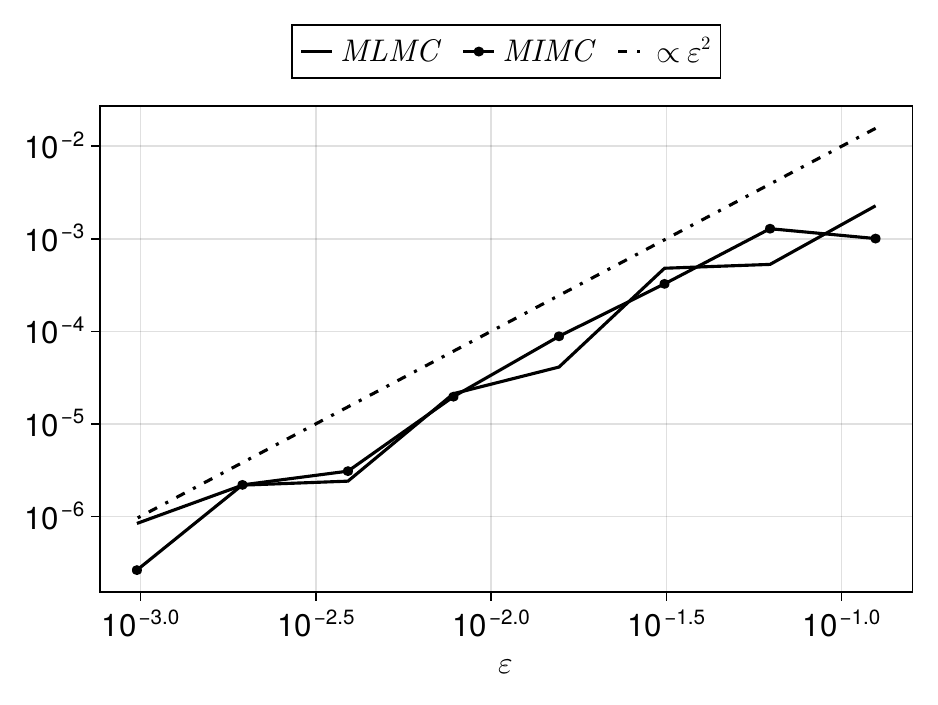}
  \includegraphics[width=0.49\textwidth]{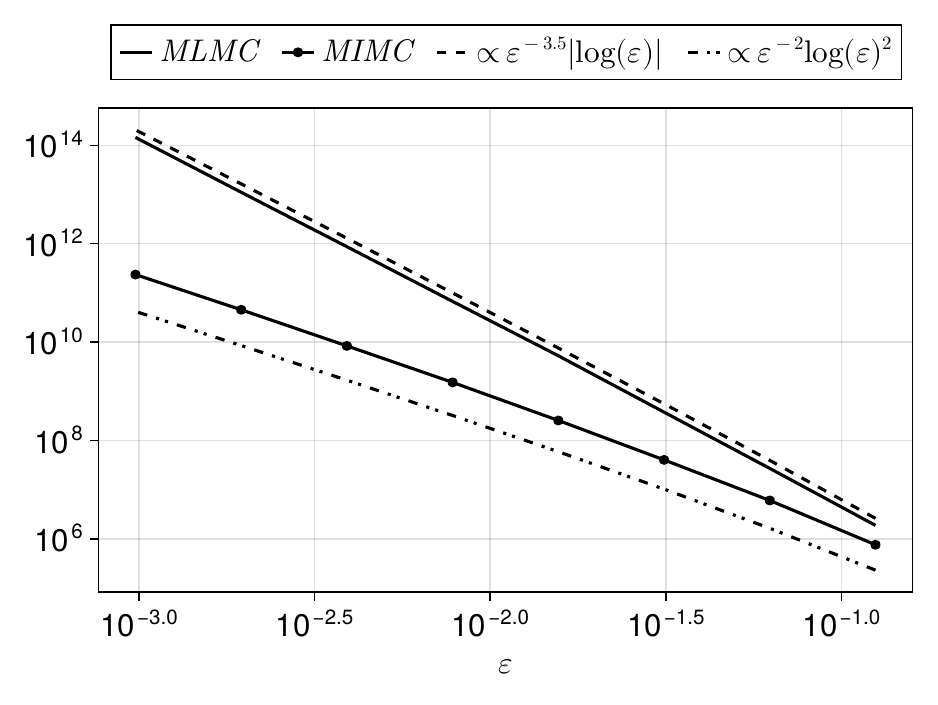}
  \caption{Comparison of performance of MIMCEI and MLMC for the SPDE~\eqref{eq:ex2_nonlinearSpde}
  with $\nu =4/3$.
  \textbf{Left:} Approximation error. \textbf{Right:} Computational cost.}
  \label{fig:nonlinearShifted_nu43_costVsError}
\end{figure}
\begin{figure}
  \center
  \includegraphics[width=0.49\textwidth]{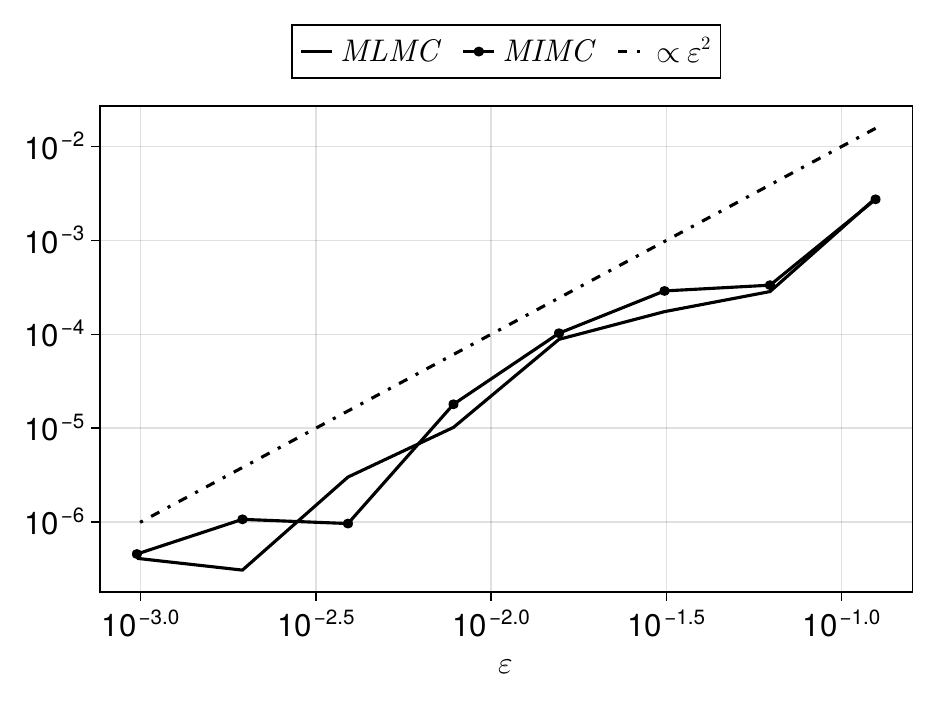}
  \includegraphics[width=0.49\textwidth]{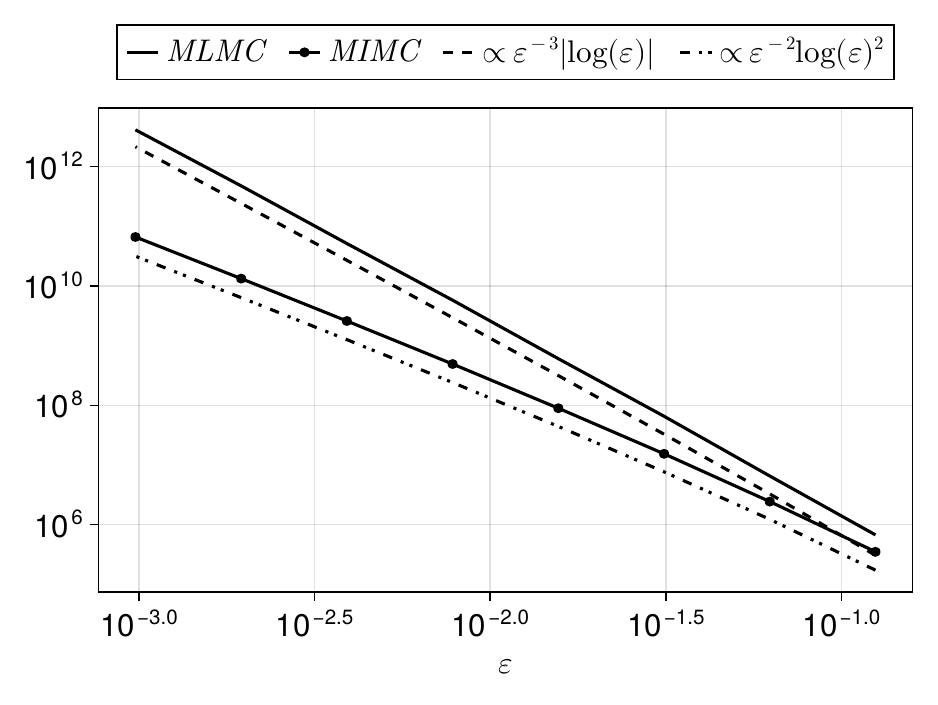}
  \caption{Comparison of performance of MIMCEI and MLMC for the SPDE~\eqref{eq:ex2_nonlinearSpde}
  with $\nu =2$.
  \textbf{Left:} Approximation error. \textbf{Right:} Computational cost.}
  \label{fig:nonlinearShifted_nu2_costVsError}
\end{figure}

\section{Conclusion}\label{sec:conclusion}

We have developed an MIMC method for semilinear SPDEs and conducted a
rigorous theoretical analysis of its convergence properties
and efficiency. Both theoretical findings and numerical experiments
confirm that our method significantly outperforms existing tractable
MLMC methods. These gains in efficiency extend
the frontier of feasible computer-aided studies of statistical
studies of semilinear SPDEs.

MLMC for SPDEs and MIMC for SDEs have been successfully utilized in filtering to
produce efficient ensemble Kalman filtering methods and particle
filters~\cite{HS22,CHLNT21,JLX21}. A potential extension of our work involves
investigating the integration of the MIMC methodology developed herein with
filtering methods for semilinear SPDEs to achieve a broader scope of
applications and enhanced performance. On the other hand, advanced splitting
method extensions of the exponential integrator for SPDEs with additive noise
and highly nonlinear drift coefficients have been developed and studied
theoretically for settings that are outside the scope of this paper,
cf.~\cite{BG20,DGK24}. Another extension of our work is to consider the
application of MIMC to such SPDEs when combined with splitting exponential
integrator methods.

An open question, outlined in Remark~\ref{rem:reduced-samples}, is whether it
is possible to leverage the improved spatial convergence rates in
Lemma~\ref{lem:strong-error-coupling} to obtain an MIMC method with improved
computational complexity. The key challenge lies in efficiently handling the
slowly-converging MIMC samples along the $\ell_2$-axis, associated with the slow
convergence when \(\ell_{1}=0\) in~\eqref{eq:l2-mix-diff-sharp-bounds} and the
corresponding bound on the weak error. It would, for example, be interesting
to explore if multilevel Richardson-Romberg extrapolation~\cite{LP17} is a
suitable tool to overcome this hurdle.

\bibliographystyle{abbrv}

\appendix
\section{Proofs in Section~\ref{sec:mimc_method}}\label{sec:mimc-proofs}

\begin{lemma}\label{lem:sum-bound}
  For \(\eta_{1}, \eta_{2}, \xi_{1}, \xi_{2}, L > 0\), we have, for hidden
  constants independent of \(L\),
  \begin{equation}\label{eq:sum-exp-compliment}
    \begin{split}
      \sum_{\lbrace (\ell_{1}, \ell_{2}) \in \N^{2} \: : \: \xi_{1} \ell_{1} + \xi_{2} \ell_{2} > L \rbrace}
      2^{-\eta_{1}\ell_{1} - \eta_{2}\ell_{2}}
      & \lesssim
        \begin{cases}
          2^{-\min(\eta_{1}/\xi_{1}, \eta_{2}/\xi_{2}) L} &  \eta_{1}/ \xi_{1} \neq \eta_{2}/ \xi_{2},\\
          L \,
          2^{-\min(\eta_{1}/\xi_{1}, \eta_{2}/\xi_{2}) L} & \textnormal{otherwise}.  \\
        \end{cases}
    \end{split}
  \end{equation}
  And, for any \(\eta_{1}, \eta_{2} \in \R\),
  \begin{equation}\label{eq:sum-exp}
    \begin{split}
      \sum_{\lbrace
      (\ell_{1}, \ell_{2}) \in \N^{2} \: : \:
      \xi_{1} \ell_{1} + \xi_{2} \ell_{2} \leq L
      \rbrace}
      2^{\eta_{1}\ell_{1} + \eta_{2}\ell_{2}}
      & \lesssim 2^{L \max(0, \eta_{1}/\xi_{1}, \eta_{2}/\xi_{2})} (L+1)^{v_{\eta}},
    \end{split}
  \end{equation}
  where
  \begin{equation}\label{eq:sum-exp-log-powers}
    v_{\eta} =
    \begin{cases}
      2 & \eta_{1}=\eta_{2}=0, \\
      1 & \eta_{2} < \eta_{1}=0
          \textnormal{ or }
          \eta_{1} < \eta_{2}=0
          \textnormal{ or }
          \frac{\eta_{1}}{\xi_{1}} = \frac{\eta_{2}}{\xi_{2}} > 0,
      \\
      0 & \textnormal{otherwise.}
    \end{cases}
  \end{equation}
\end{lemma}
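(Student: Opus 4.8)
The plan is to reduce both double sums to iterated one-dimensional geometric sums and to read off the dominant contribution from the vertices of the feasible region, using that the objective $\eta_1 \ell_1 + \eta_2 \ell_2$ is linear while the index set is a half-plane (for \eqref{eq:sum-exp-compliment}) or a triangle (for \eqref{eq:sum-exp}). Throughout I allow the hidden constants to depend on $\eta_1,\eta_2,\xi_1,\xi_2$ but not on $L$, and I repeatedly invoke the elementary estimate that for $r>0$ and integers $a\le b$ one has $\sum_{k=a}^{b} r^k \lesssim \max(r^a,r^b)$ when $r\neq 1$, while $\sum_{k=a}^{b} 1 = b-a+1$ when $r=1$; it is precisely the ratio-one case that produces the logarithmic factors. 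Since shifting the starting index between $0$ and $1$ changes each count by a lower-order amount, the convention for $\N$ only affects the constants.

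For \eqref{eq:sum-exp-compliment} I would sum over $\ell_2$ first for each fixed $\ell_1$. Writing $a:=\eta_1/\xi_1$ and $b:=\eta_2/\xi_2$, the inner constraint is $\ell_2 > (L-\xi_1\ell_1)/\xi_2$, and I split on whether $\xi_1\ell_1\ge L$. When $\xi_1\ell_1\ge L$ the inner sum is a convergent geometric tail bounded by a constant, so summing the remaining $\ell_1 \ge L/\xi_1$ contributes $\lesssim 2^{-\eta_1 L/\xi_1}=2^{-aL}$. When $\xi_1\ell_1<L$ the inner tail is $\lesssim 2^{-\eta_2(L-\xi_1\ell_1)/\xi_2}$, and after factoring out $2^{-bL}$ the outer sum becomes $\sum_{1\le \ell_1 < L/\xi_1} 2^{\xi_1(b-a)\ell_1}$. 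The three cases $a<b$, $a>b$, $a=b$ correspond to this geometric sum being dominated by its top endpoint (giving $2^{L(b-a)}$, hence total $2^{-aL}$), summing to a constant (giving total $2^{-bL}$), or having ratio $1$ (giving the extra factor $L$ and total $L\,2^{-aL}$). In every case the rate is $2^{-\min(a,b)L}$, with the logarithmic factor appearing exactly when $a=b$, which is \eqref{eq:sum-exp-compliment}.

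For \eqref{eq:sum-exp} the same device applies to the triangle $\{\xi_1\ell_1+\xi_2\ell_2\le L\}$, where now $\eta_1,\eta_2$ may have either sign. Summing over the admissible $\ell_2\in\{0,\dots,\lfloor (L-\xi_1\ell_1)/\xi_2\rfloor\}$ and bounding the resulting geometric sum by its dominant endpoint reduces matters to a single geometric sum over $\ell_1\in\{0,\dots,\lfloor L/\xi_1\rfloor\}$: the summand carries the base value $1$ when the $\ell_2$-sum is maximized at $\ell_2=0$ (i.e. $\eta_2\le 0$) and the factor $2^{\eta_2 L/\xi_2}2^{(\eta_1-\xi_1\eta_2/\xi_2)\ell_1}$ when it is maximized at the top of the column. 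Identifying which vertex $(0,0)$, $(L/\xi_1,0)$ or $(0,L/\xi_2)$ maximizes $\eta_1\ell_1+\eta_2\ell_2$ yields the exponential factor $2^{L\max(0,\eta_1/\xi_1,\eta_2/\xi_2)}$, and the exponent $v_\eta$ in \eqref{eq:sum-exp-log-powers} records the degeneracy of this maximizer: $v_\eta=2$ when the objective vanishes identically and one merely counts the $\Theta(L^2)$ lattice points; $v_\eta=1$ when the maximum is attained along an entire edge carrying $\Theta(L)$ points (the cases $\eta_2<\eta_1=0$ and $\eta_1<\eta_2=0$ give an edge on an axis, while $\eta_1/\xi_1=\eta_2/\xi_2>0$ gives the slanted edge, along which the objective is a constant multiple of the constraint value); and $v_\eta=0$ when the maximizer is a single vertex and one geometric series dominates.

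The routine part is the bookkeeping of endpoints. The main obstacle I anticipate is handling the equal-ratio degeneracy $\eta_1/\xi_1=\eta_2/\xi_2$ (and the analogous $a=b$ in the first bound), since there the reduced geometric series has ratio $1$ and genuinely produces the logarithmic factor; I must therefore verify that the case split in \eqref{eq:sum-exp-log-powers} is exhaustive and that the $\Theta(L)$ lattice points along a slanted edge are counted correctly rather than over- or under-counted. A secondary nuisance is that the thresholds $L/\xi_i$ are generally non-integers, so the passage from the real constraints to integer summation ranges must be carried out with floors, which affects only the hidden constants.
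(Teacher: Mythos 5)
Your proposal is correct and follows essentially the same route as the paper: iterated geometric sums over the half-plane and the triangle, bounding each one-dimensional sum by its dominant endpoint and reading off the logarithmic factors from the ratio-one degeneracies, exactly as in the paper's case analysis for \eqref{eq:sum-exp}. The only (minor) divergence is in the degenerate case $\eta_1/\xi_1=\eta_2/\xi_2$ of \eqref{eq:sum-exp-compliment}, where you keep the iterated-sum argument and extract the factor $L$ from the ratio-one geometric sum, whereas the paper switches to counting lattice points on the level sets $\{k<\eta_1\ell_1+\eta_2\ell_2\le k+1\}$ and summing $\sum_{k\gtrsim \theta L} k\,2^{-k}$; both yield the same bound.
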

\begin{proof}
  First
  \[
    \begin{split}
      \sum_{\lbrace
      (\ell_{1}, \ell_{2}) \in \N^{2} \: : \:
      \xi_{1} \ell_{1} + \xi_{2} \ell_{2} > L
      \rbrace}
      2^{-\eta_{1}\ell_{1} - \eta_{2}\ell_{2}}
      & \lesssim \sum_{\ell_{1}=\lfloor L/\xi_{1} \rfloor}^{\infty}
        2^{-\eta_{1} \ell_{1}} \sum_{\ell_{2}= \lfloor L/\xi_{2} - \ell_{1} \xi_{1}/\xi_{2} \rfloor }^{\infty}
        2^{-\eta_{2} \ell_{2}} \\
      & \eqsim
        2^{-\eta_{2}L / \xi_{2}}
        \sum_{\ell_{1}=\lfloor L/\xi_{1} \rfloor}^{\infty}
        2^{( \eta_{2} \xi_{1}/\xi_{2} -\eta_{1}) \ell_{1}}
        \sum_{\ell_{2}=0}^{\infty}
        2^{-\eta_{2} \ell_{2}} \\
      & \lesssim
        2^{-\eta_{2}L / \xi_{2}},
    \end{split}
  \]
  assuming that \(\eta_{2}/\xi_{2} < \eta_{1}/\xi_{1}\). Otherwise, we obtain a similar
  bound with a symmetric calculation, so that, when \(\xi_{2}/\eta_{2} \neq
  \xi_{1}/\eta_{1}\),
  \[
    \sum_{\lbrace (\ell_{1}, \ell_{2}) \in \N^{2} \: : \: \xi_{1} \ell_{1} + \xi_{2} \ell_{2} > L \rbrace} \leq
    \widetilde C \, 2^{-\min(\eta_{1}/ \xi_{1}, \eta_{2}/ \xi_{2}) L}.
  \]
  In general, let \(\theta = \min(\eta_{1}/ \xi_{1}, \eta_{2}/ \xi_{2})\), and note that \(\lbrace
  \xi_{1} \ell_{1} + \xi_{2} \ell_{2} > L \rbrace \subseteq \lbrace \eta_{1} \ell_{1} + \eta_{2} \ell_{2} > \theta L \rbrace \) and
  define \(\cJ_{k} := \lbrace \boldEll \in \N_{0} \: : \: k< \eta_{1} \ell_{1} + \eta_{2} \ell_{2}
  \leq k+1 \rbrace\). A direct calculation yields that
    \[
      \lvert {\cJ_{k}} \rvert \leq
      (k+1) \left(\frac{1}{\eta_{2}} + \frac{1}{\eta_{1}\eta_{2}}\right)
          + 1 + \frac{1}{\eta_{2}} \lesssim k.
    \]
    Then we prove \eqref{eq:sum-exp-compliment} as follows:
  \[
    \begin{split}
      \sum_{\substack{\boldEll \in \N_{0}^{2}\\
      \eta_{1} \ell_{1} + \eta_{2} \ell_{2} > \theta L}} 2^{-\eta_{1} \ell_{1} -\eta_{2} \ell_{2}}
      &\leq \sum_{k=\lfloor  \theta L \rfloor}^{\infty} \sum_{\boldEll \in \cJ_{k}} 2^{-\eta_{1} \ell_{1} -\eta_{2} \ell_{2}} \\
      &\leq \sum_{k=\lfloor\theta L \rfloor}^{\infty} \sum_{\boldEll \in \cJ_{k}} 2^{-k} \\
      &\lesssim \sum_{k=\lfloor\theta L \rfloor}^{\infty} k 2^{-k} \lesssim \theta L 2^{-\theta  L}. \\
    \end{split}
  \]
  To prove \eqref{eq:sum-exp}, note that, for \(\eta_{1}\eta_{2}=0\),
  \[
    \begin{split}
      \sum_{\boldEll \in \mathcal I_{L}} 2^{\eta_{1} \ell_{1} + \eta_{2} \ell_{2}}
      &\leq \sum_{\ell_2=0}^{\lceil L/\xi_2 \rceil}
        \sum_{\ell_{1}=0}^{\lceil(L - \xi_{2} \ell_{2})/\xi_{1}\rceil} 2^{\eta_{1} \ell_{1}} 2^{\eta_{2} \ell_{2}} \\
      &\lesssim
        \begin{cases}
          (L+1)^{2} & \eta_{1} = \eta_{2} = 0, \\
          (L+1) & \min(\eta_{1}, \eta_{2}) < 0, \\
          2^{L \max(\eta_{1}/\xi_{1}, \eta_{2}/\xi_{2})} & \text{otherwise}. \\
        \end{cases}
    \end{split}
  \]
  When \(\eta_{1} < 0\) or \(\eta_{2} <0\) and \(\eta_{1}\eta_{2} \neq 0\) then
  \begin{equation}
    \begin{split}
      \sum_{\boldEll \in \mathcal I_{L}} 2^{\eta_{1} \ell_{1} + \eta_{2} \ell_{2}}
      &\leq \sum_{\ell_2=0}^{\lceil L/\xi_2 \rceil}
        \sum_{\ell_{1}=0}^{\lceil(L - \xi_{2} \ell_{2})/\xi_{1}\rceil} 2^{\eta_{1} \ell_{1}} 2^{\eta_{2} \ell_{2}} \\
      &\lesssim \sum_{\ell_2=0}^{\lceil L/\xi_2 \rceil} 2^{\max(0, \eta_{1}/\xi_{1}) \, (L-\xi_2 \ell_2)+ \eta_{2} \ell_2} \\
      &\lesssim 2^{L \max(0, \eta_{1}/\xi_{1}, \eta_{2}/\xi_{2})}.
    \end{split}
  \end{equation}
  Moreover, when \(\eta_{1} > 0\) and \(\eta_{2} > 0\),
  \[
    \begin{split}
      \sum_{\boldEll \in \mathcal I_{L}} 2^{\eta_{1} \ell_{1} + \eta_{2} \ell_{2}}
      &\leq \sum_{\ell_2=0}^{\lceil L/\xi_2 \rceil}
        \sum_{\ell_{1}=0}^{\lceil(L - \xi_{2} \ell_{2})/\xi_{1}\rceil} 2^{\eta_{1} \ell_{1}} 2^{\eta_{2} \ell_{2}} \\
      &\lesssim \sum_{\ell_2=0}^{\lceil L/\xi_2 \rceil} 2^{\max(0, \eta_{1}/\xi_{1}) \, (L-\xi_2 \ell_2)+ \eta_{2} \ell_2} \\
      &\lesssim
        \begin{cases}
          2^{L \max(0, \eta_{1}/\xi_{1}, \eta_{2}/\xi_{2})} & \eta_{1}/\xi_{1} \neq \eta_{2}/\xi_{2},\\
          2^{L \max(0, \eta_{1}/\xi_{1}, \eta_{2}/\xi_{2})} (L+1) & \eta_{1}/\xi_{1} = \eta_{2}/\xi_{2}.
        \end{cases}
    \end{split}
  \]
  Combining the different cases proves \eqref{eq:sum-exp}.
\end{proof}

\begin{lemma}\label{lem:general-mimc-cost-error}
  {Define the sequences
  $M_k \eqsim N_k \eqsim 2^k$ for \(k \in \N\), }and let \(\Psi(X^{M_{k}}_{N_{k}})\)
  denote a numerical approximation of \(\Psi(X)\) that it is strongly convergent
  and satisfies
  \begin{equation}\label{eq:weak-conv-assumption}
    \lim_{k \to \infty} \|\E[\Psi(X^{M_{k}}_{N_{k}}) - \Psi(X)] \|_{U} = 0,
  \end{equation}
  and, for \(\Delta_{\boldEll} \Psi(X)\) as defined in \eqref{eq:mimc-diff-def},
  \begin{equation}\label{eq:strong-error-coupling-assumption}
    \begin{split}
      \|\Delta_{\boldEll}\Psi(X)\|_{L^2(\Omega, U)}^2
      \le C\,  2^{-\beta_1 \ell_{1} -\beta_{2} \ell_{2}},
    \end{split}
  \end{equation}
  for some $C, \beta_1, \beta_2 > 0$ and all $\boldEll = (\ell_{1}, \ell_{2}) \in \N_0^2$.
  Then there exists \(\alpha_{i} \geq \beta_{i}/2\) for \(i=1,2\), such that
  \begin{equation}\label{eq:weak-conv-mimc-general}
    \|\E[\Delta_{\boldEll}\Psi(X)]\|_{U} \leq C \, 2^{-\alpha_{1} \ell_{1} - \alpha_{2} \ell_{2}}.
  \end{equation}
  Morever, there exist an index set $\cI\subset \N_0^2$, depending on \(\varepsilon, \beta_{1}\)
  and \(\beta_{2}\) that fulfills the telescoping
  constraint~\eqref{eq:telescoping-constraint}, and a sequence
  $(m_\boldEll)_{\boldEll \in \cI} \subset \N$, also depending on $\varepsilon$, such that an
  MIMC estimator \eqref{eq:mimcei} has a mean-square error (MSE) of
  \[
    \|\MIMCest - \E[\Psi(X)]\|_{L^2(\Omega, U)}^2 = \mathcal O(\varepsilon^2)
  \]
  and, given \(\cost(\Delta_{\boldEll} \Psi(X)) \simeq M_{\ell_{1}} N_{\ell_{2}}
  (\log(N_{\ell_{2}}))^{\mathfrak l} \simeq \ell_{2}^{\mathfrak{l}} 2^{\ell_{1}+\ell_{2}}\) for \({\mathfrak{l}} \in
  \lbrace0,1\rbrace\), it holds that \(\cost(\MIMCest) = \mathcal O\left(\varepsilon^{-2-2u} \lvert
    \log(\varepsilon^{-1}) \rvert^{r} \right)\) for
  \[
    2u = \max\left(0, \frac{1-\beta_{1}}{\alpha_{1}}, \frac{1-\beta_{2}}{\alpha_{2}} \right)
  \]
  and
  \[
    r =
      1_{\beta_{2} \leq 1} \, \mathfrak l + \begin{cases}
        4 & \beta_{1}=\beta_{2}=1 \\
        2 & \beta_{1}=1 < \beta_{2} \textnormal{ or } \beta_{2}=1 < \beta_{1} \\
        3 + 2u & \frac{1-\beta_{1}}{\alpha_{1}} = \frac{1-\beta_{2}}{\alpha_{2}} > 0 \\
        0 & \text{otherwise}. \\
    \end{cases}
    \]
\end{lemma}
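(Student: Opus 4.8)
The plan is to follow the standard MIMC bias/variance decomposition and optimize over sample sizes, reducing every sum to the estimates of Lemma~\ref{lem:sum-bound}. First I would establish \eqref{eq:weak-conv-mimc-general}: since the norm on $U$ is convex, Jensen's inequality gives $\|\E[\Delta_\boldEll\Psi(X)]\|_U \le \|\Delta_\boldEll\Psi(X)\|_{L^2(\Omega,U)} \le \sqrt{C}\,2^{-\beta_1\ell_1/2-\beta_2\ell_2/2}$, so \eqref{eq:weak-conv-mimc-general} holds at least with $\alpha_i=\beta_i/2$. Because the samples are independent and $\MIMCest$ is an unbiased estimator of $\sum_{\boldEll\in\cI}\E[\Delta_\boldEll\Psi(X)]$, the mean-square error splits as
\[
\|\MIMCest-\E[\Psi(X)]\|_{L^2(\Omega,U)}^2 = \Big\|\sum_{\boldEll\notin\cI}\E[\Delta_\boldEll\Psi(X)]\Big\|_U^2 + \sum_{\boldEll\in\cI}\frac{V_\boldEll}{m_\boldEll},
\]
where $V_\boldEll\le\E[\|\Delta_\boldEll\Psi(X)\|_U^2]\le C\,2^{-\beta_1\ell_1-\beta_2\ell_2}$ by \eqref{eq:strong-error-coupling-assumption}, and the first term is the truncated tail of the telescoping sum, which converges to $\E[\Psi(X)]$ by \eqref{eq:weak-conv-assumption} together with strong convergence.

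For the index set I would take the triangle $\cI=\cI_L:=\{\boldEll\in\N_0^2:\alpha_1\ell_1+\alpha_2\ell_2\le L\}$, i.e. $\xi_i=\alpha_i$; this set is downward closed and hence satisfies the telescoping constraint \eqref{eq:telescoping-constraint}. Bounding the bias by $\sum_{\boldEll\notin\cI_L}\|\E[\Delta_\boldEll\Psi(X)]\|_U\le C\sum_{\boldEll\notin\cI_L}2^{-\alpha_1\ell_1-\alpha_2\ell_2}$ and invoking \eqref{eq:sum-exp-compliment} (with the one-dimensional boundary sums along $\ell_1=0$ and $\ell_2=0$, which lie outside the $\N^2$ range of that estimate, handled separately and found to be of the same order) gives a bound $\lesssim L\,2^{-L}$, since here $\alpha_1/\xi_1=\alpha_2/\xi_2=1$. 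Choosing $L=\lceil\log_2(\varepsilon^{-1})\rceil$ enlarged by $\cO(\log\log\varepsilon^{-1})$ then makes the squared bias $\cO(\varepsilon^2)$, with $2^L\eqsim\varepsilon^{-1}$ up to logarithmic factors. I would also record why $\xi_i=\alpha_i$ is the cost-optimal aspect ratio: after normalizing the bias constraint and minimizing the resulting cost exponent over $(\xi_1,\xi_2)$, one is forced to take $\xi_i=\alpha_i$ whenever $1-\beta_i>0$, and this choice is harmless when $1-\beta_i\le0$.

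Next I would fix the sample sizes by the water-filling choice $m_\boldEll=\big\lceil \varepsilon^{-2}\,\sqrt{V_\boldEll/W_\boldEll}\,S\big\rceil$ with $W_\boldEll:=\cost(\Delta_\boldEll\Psi(X))\eqsim\ell_2^{\mathfrak l}2^{\ell_1+\ell_2}$ and $S:=\sum_{\boldEll\in\cI}\sqrt{V_\boldEll W_\boldEll}$. The ceiling lower bound $m_\boldEll\ge \varepsilon^{-2}\sqrt{V_\boldEll/W_\boldEll}\,S$ makes the variance term $\le\varepsilon^2 S^{-1}\sum_\boldEll\sqrt{V_\boldEll W_\boldEll}=\varepsilon^2$, so the total MSE is $\cO(\varepsilon^2)$. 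The cost then obeys
\[
\cost(\MIMCest)=\sum_{\boldEll\in\cI}m_\boldEll W_\boldEll\le \varepsilon^{-2}S^2+\sum_{\boldEll\in\cI}W_\boldEll.
\]
Since $\sqrt{V_\boldEll W_\boldEll}\eqsim\ell_2^{\mathfrak l/2}\,2^{(1-\beta_1)\ell_1/2+(1-\beta_2)\ell_2/2}$, applying \eqref{eq:sum-exp} with $\eta_i=(1-\beta_i)/2$ and $2^L\eqsim\varepsilon^{-1}$ yields $S^2\lesssim\varepsilon^{-2u}\,(\log\varepsilon^{-1})^{r}$ with $2u=\max(0,(1-\beta_1)/\alpha_1,(1-\beta_2)/\alpha_2)$, giving the leading term $\varepsilon^{-2-2u}$. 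The rounding term $\sum_\boldEll W_\boldEll$, estimated by \eqref{eq:sum-exp} with $\eta_i=1$, is $\lesssim\varepsilon^{-\max(1/\alpha_1,1/\alpha_2)}\,\mathrm{poly}(\log\varepsilon^{-1})$, and since $\alpha_i\ge\beta_i/2$ forces $\max(1/\alpha_1,1/\alpha_2)\le 2+2u$, it is absorbed into the leading order.

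The main obstacle is the precise determination of the logarithmic exponent $r$, which requires a careful case analysis feeding $\eta_i=(1-\beta_i)/2$ into the three regimes of \eqref{eq:sum-exp-log-powers}. The FFT factor $\ell_2^{\mathfrak l/2}$ contributes a power of $\log\varepsilon^{-1}$ precisely when the $\ell_2$-sum fails to decay geometrically, i.e. when $\beta_2\le1$, which explains the $\ind_{\beta_2\le1}\,\mathfrak l$ term; the $v_\eta$ contributions account for the four listed cases, with the extra $2u$ appearing on the diagonal $(1-\beta_1)/\alpha_1=(1-\beta_2)/\alpha_2>0$ where both directions contribute at the same exponential rate. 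One must also confirm, in the boundary subcase $\alpha_i=\beta_i/2$ where the rounding term matches the leading order $\varepsilon^{-2-2u}$, that its logarithmic power does not exceed $r$; this is where the bookkeeping is most delicate, but it follows from the same application of Lemma~\ref{lem:sum-bound} after squaring $S$ and comparing against the third regime of \eqref{eq:sum-exp-log-powers}.
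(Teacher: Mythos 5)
Your overall strategy coincides with the paper's: split the MSE into bias plus statistical error, take a triangular index set $\cI_L=\{\xi_1\ell_1+\xi_2\ell_2\le L\}$, choose $m_{\boldEll}$ by the water-filling rule \eqref{eq:ml-general}, and reduce every sum to Lemma~\ref{lem:sum-bound}. The polynomial exponent $2+2u$ and the treatment of the rounding term $\sum_{\boldEll}W_{\boldEll}$ (including the observation that $\alpha_i\ge\beta_i/2$ forces $\max_i 1/\alpha_i\le 2+2u$) are correct.

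The genuine gap is your choice of aspect ratio $\xi_i=\alpha_i$. With that choice you have $\eta_1/\xi_1=\eta_2/\xi_2=1$ identically in \eqref{eq:sum-exp-compliment}, so the bias tail is \emph{always} in the degenerate regime and is only bounded by $L\,2^{-L}$; killing it forces $2^L\eqsim\varepsilon^{-1}\log_2(\varepsilon^{-1})$, exactly as you note. But then $S^2\lesssim 2^{2uL}L^{2v}\eqsim\varepsilon^{-2u}(\log_2\varepsilon^{-1})^{2u+2v}$, i.e.\ every case with $u>0$ picks up an extra factor $(\log_2\varepsilon^{-1})^{2u}$. In the ``otherwise'' case of the lemma (say $\beta_1<1<\beta_2$ with $(1-\beta_1)/\alpha_1\neq(1-\beta_2)/\alpha_2$, so $r=0$ and $u>0$) your construction therefore yields $\cO(\varepsilon^{-2-2u}\lvert\log\varepsilon\rvert^{2u})$ rather than the claimed $\cO(\varepsilon^{-2-2u})$, so the stated value of $r$ is not achieved. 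The paper avoids this by taking $\xi_i=\alpha_i+(1-\beta_i)/c$ with $c\ge2$: this keeps the same exponent $u$ (the two maxima in \eqref{eq:rates-calc} are then attained at the same index) while making $\xi_1/\alpha_1\neq\xi_2/\alpha_2$ except precisely on the diagonal $(1-\beta_1)/\alpha_1=(1-\beta_2)/\alpha_2$, which is the only case where the lemma's $r$ actually contains the $+2u$ term. Your parenthetical claim that cost-optimization ``forces $\xi_i=\alpha_i$'' is therefore also incorrect; the optimal weights are $\alpha_i+(\gamma_i-\beta_i)/2$ with $\gamma_i=1$ here, which is the paper's choice with $c=2$. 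The remaining differences (using $W_{\boldEll}=\ell_2^{\mathfrak l}2^{\ell_1+\ell_2}$ instead of $\widetilde C_{\boldEll}=2^{\ell_1+\ell_2}$ inside the water-filling formula, and deferring the case analysis for $r$) are cosmetic and would not cause problems once the index set is fixed.
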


\begin{proof}[Proof of Lemma~\ref{lem:general-mimc-cost-error}]
  The proof is similar to the proof in \cite[Theorem 2.2]{HNT16}, except for
  the additional log term in the cost. We include it here for completeness.
  We begin by splitting the error into a statistical error and a bias:
  \begin{equation}\label{eq:bias-stat-error-mimc}
    \|\MIMCest - \E[\Psi(X)]\|_{L^2(\Omega, U)}^2 =  \|\MIMCest - \E[\MIMCest]\|_{L^2(\Omega, U)}^2 + \|\E[\MIMCest - \Psi(X)]\|_{U}^2.
  \end{equation}
  For the first term, the statistical error, using the independence of samples,
  \begin{equation}\label{eq:var-bound-mimc-general}
    \begin{split}
      \|\MIMCest - \E[\MIMCest]\|_{L^2(\Omega, U)}^2
      &= \left\|\sum_{\boldEll \in \cI} \frac{1}{m_{\boldEll}}\sum_{i=1}^{m_{\boldEll}}(\Delta_{\boldEll} \Psi(X)^{(\boldEll, i)} - \E[\Delta_{\boldEll} \Psi(X)])\right\|_{L^2(\Omega, U)}^2 \\
      &= \sum_{\boldEll \in \cI}  \frac{1}{m_{\boldEll}}\left\|\Delta_{\boldEll} \Psi(X) - \E[\Delta_{\boldEll} \Psi(X)]\right\|_{L^2(\Omega, U)}^2 \\
      &\le \sum_{\boldEll \in \cI} \frac{\|\Delta_{\boldEll} \Psi(X) \|_{L^2(\Omega, U)}^2}{m_\boldEll}.
    \end{split}
  \end{equation}
  Setting $C_{\boldEll} := 2^{\ell_{1}+\ell_2} \ell_{2}^{\mathfrak{l}}$ to be the computational
  cost, up to a constant, of computing $\Delta_{\boldEll} \Psi(X)$, {defining
    \(\widetilde C_{\boldEll} :=2^{\ell_{1}+\ell_2} \leq C_{\boldEll}\) for every
    \(\boldEll \in \N^{2}\)}, and setting \(\widehat V_{\boldEll} := 2^{-\beta_{1} \ell_{1} -
    \beta_{2}\ell_{2}}\) to be the upper bound in
  \eqref{eq:strong-error-coupling-assumption}, again up to a constant, we set
  \begin{equation}\label{eq:ml-general}
    m_{\boldEll} = \left \lceil %
      \varepsilon^{-2} \sqrt{\frac{\widehat V_{\boldEll}}{\widetilde C_\boldEll}}
      \sum_{{\mathbf{k}} \in \cI} \sqrt{\widehat V_{{\mathbf{k}}} \widetilde C_{{\mathbf{k}}}} \right\rceil.
  \end{equation}
  This is in alignment with MLMC and MIMC theory, cf.~\cite{G08,HNT16}. We
  proceed to bound the statistical error
  \[
    \begin{split}
      \|\MIMCest - \E[\MIMCest]\|_{L^2(\Omega, U)}^2& \stackrel{\eqref{eq:var-bound-mimc-general}}{\leq}
                                               \varepsilon^2 \sum_{\boldEll \in \cI}
                                               \left(\sqrt{\frac{\widehat V_{\boldEll}}{\widetilde C_\boldEll}}
                                               \sum_{{\mathbf{k}} \in \cI} \sqrt{\widehat V_{{\mathbf{k}}} \widetilde C_{{\mathbf{k}}}}\right)^{-1}
                                               \|\Delta_{\boldEll} \Psi(X) \|^{2}_{L^2(\Omega, U)} \\
                                             &\stackrel{\eqref{eq:strong-error-coupling-assumption}}{<}
                                               C \varepsilon^2 \sum_{\boldEll \in \cI}
                                               \left(\sqrt{\frac{\widehat V_{\boldEll}}{\widetilde C_\boldEll}}
                                               \sum_{{\mathbf{k}} \in \cI} \sqrt{\widehat V_{{\mathbf{k}}} \widetilde C_{{\mathbf{k}}}}\right)^{-1}
                                               \widehat V_{\boldEll} \\
                                             &=  \mathcal O(\varepsilon^2).
    \end{split}
  \]

  For the second term in \eqref{eq:bias-stat-error-mimc}, i.e., the bias,
  first note that by~\eqref{eq:strong-error-coupling-assumption} and Jensen's
  inequality, it holds that
  \[
    \begin{split}
      \|\E[ \Delta_{\boldEll}\Psi(X) ]\|_{U} \le
      \E[\|  \Delta_{\boldEll}\Psi(X) \|_{U}]
      \lesssim 2^{-\beta_1 \ell_{1}/2} 2^{-\beta_2 \ell_{2}/2} \,,
    \end{split}
  \]
  which shows that \eqref{eq:weak-conv-mimc-general} is satisfied for some
  $\alpha_1 \ge \beta_1/2$, $\alpha_2 \ge \beta_2/2$.
  By linearity of the expectation operator on $U$, we obtain that for any
  $k_1,k_2 \in \N_0$,
  \[
    \E[\Psi(X_{N_{k_2}}^{M_{k_1}})] = \sum_{\ell_1=0}^{k_1} \sum_{\ell_2=0}^{k_2}
    \E[\Delta_{\boldEll} \Psi(X) ].
  \]
  We then follow \cite[(31)--(33)]{HNT16} and set
  \begin{equation}\label{eq:alternative-index-set-shape}
    \cI := \cI_{L} = \{\boldEll \in \N_0^2 \mid \xi_1 \ell_1 + \xi_2 \ell_2 \le L \},
  \end{equation}
  for \(\xi_j > 0\) which will be specified later depending on the values of \(\lbrace
  \beta_{i}, \alpha_{i} \rbrace_{i=1,2}\). Clearly \(\cI_{L}\) satisfies the telescoping sum
  constraint~\eqref{eq:telescoping-constraint}, so that, given
  \eqref{eq:weak-conv-assumption},
  \begin{equation}\label{eq:exp-bound-mimc}
    \begin{split}
      \|\E[\MIMCest - \Psi(X)]\|_{U} &\leq \lim_{k \to \infty} \|\E[\MIMCest - \Psi(X_{N_{k}}^{M_{k}})]\|_{U}\\
                                     &= \left\| \sum_{\boldEll \in \N_0^2 \setminus \cI_{L} } \E[\Delta_\boldEll \Psi(X)] \right\|_{U}\\
                                &\le \sum_{\boldEll \in \N_0^2 \setminus \cI_{L} } \| \E[\Delta_\boldEll \Psi(X)]\|_{ U} \\
                                &\le 2^{-\alpha_{1} \ell_{1}/2} 2^{-\alpha_{2} \ell_{2}/2} \,.
    \end{split}
  \end{equation}
  Here, the limit in the second equality holds thanks to the %
  summability of $(\|\E[\Delta_\boldEll \Psi(X)]\|_U)_{\boldEll \in \N_0^2}$ implied
  by~\eqref{eq:weak-conv-mimc-general}. We bound \eqref{eq:exp-bound-mimc}
  using \eqref{eq:sum-exp-compliment}, and then conclude that setting
  \begin{equation}\label{eq:mimc-L-choice}
    L = \begin{cases}
      \max(\lceil
      \theta(\log_2(2 \varepsilon^{-1}) +
      \log_{2}({\theta}\log_2( 2 \varepsilon^{-1}))
      )\rceil , 1) & \xi_{1}/\alpha_{1} = \xi_{2}/\alpha_{2},\\
      \max(\lceil
      {\theta} ({\log_2(2 \varepsilon^{-1})
      })\rceil , 1) & \textnormal{otherwise}, \\
    \end{cases}
  \end{equation}
  for \(\theta = \max(\xi_{1}/\alpha_{1},\xi_{2}/\alpha_{2})\), ensures that \(\|\E[\MIMCest -
  \Psi(X)]\|_{U} %
  = \mathcal O(\varepsilon)\). Substituting the bounds on the bias and the statistical
  error proves the bound on the mean square error.
  
  We now bound the
  computational cost:
  \begin{equation}\label{eq:mimc-cost-general}
    \begin{split}
      \cost(\MIMCest) &:= \sum_{\boldEll \in \cI_{L}} m_{\boldEll}\, \cost(\Delta_{\boldEll}\Psi(X))\\
                      & \lesssim
                        \varepsilon^{-2} \sum_{\boldEll \in \cI_{L}}
                        \left(\sqrt{\frac{\widehat V_{\boldEll}}{\widetilde C_\boldEll}}
                        \sum_{{\mathbf{k}} \in \cI_{L}} \sqrt{\widehat V_{{\mathbf{k}}} \widetilde C_{{\mathbf{k}}}}\right)
                        C_{\boldEll}
                        + \sum_{\boldEll \in \cI_{L}} C_{\boldEll}\\
                      & = \varepsilon^{-2}
                        {\left(\sum_{\boldEll \in \cI_{L}} \sqrt{\widehat V_{\boldEll} \widetilde C_\boldEll}
                        \right)
                        \left(\sum_{\boldEll \in \cI_{L}} C_{\boldEll}\sqrt{\widehat V_{\boldEll} / \widetilde C_\boldEll}\right)}
                        + \sum_{\boldEll \in \cI_{L}} C_{\boldEll}. %
    \end{split}
  \end{equation}
  We start by bounding the first term in last line by considering the
  different cases of \(\beta_{2}\) and recalling that
  \begin{equation}\label{eq:mimc-cost-factors}
    \begin{aligned}
      \sqrt{\widehat V_{\boldEll} \widetilde C_\boldEll}
      &\eqsim
        2^{(1-\beta_{1})\ell_{1}/2 + (1-\beta_{2})\ell_{2}/2} \\
      \textnormal{and}\qquad
      C_\boldEll \sqrt{\widehat V_{\boldEll} / \widetilde C_\boldEll} &\eqsim 2^{(1-\beta_{1})\ell_{1}/2 + (1-\beta_{2})\ell_{2}/2} \ell_{2}^{\mathfrak l}
    \end{aligned}
  \end{equation}
  We first make the choice \(\xi_{i} = \alpha_{i} + (1-\beta_{i})/c > 0\), for some \(c \geq
  2\).
  \begin{description}
  \item[The case $\beta_2 \leq 1$] Using \eqref{eq:sum-exp}, for \(\eta_{i}=1-\beta_{i}\),
    and substituting \eqref{eq:mimc-L-choice}, we arrive at
    \[
      \begin{aligned}
        \sum_{\boldEll \in \cI_{L}} C_{\boldEll} \sqrt{\widehat V_{\boldEll}/ \widetilde C_\boldEll}
        &\lesssim (L+1)^{\mathfrak l} \sum_{\boldEll \in \cI_{L}} 2^{(1-\beta_{1})\ell_{1}/2 + (1-\beta_{2}) \ell_{2}/2} \\
        & = \mathcal O(2^{\max\left(0, \frac{1-\beta_{1}}{\xi_{1}},
          \frac{1-\beta_{2}}{\xi_{2}}\right)L/2} L^{\mathfrak l + v}) \\
        & = \mathcal O(\varepsilon^{-u} \log_{2}(\varepsilon^{-1})^{\mathfrak l + v + \tilde v}),
      \end{aligned}
    \]
    where, substituting the value of \(\xi_{i}\),
    \begin{equation}\label{eq:rates-calc}
      \begin{aligned}
        u &=
        \frac{1}{2}\max\left( \frac{\xi_{1}}{\alpha_{1}}, \frac{\xi_{2}}{\alpha_{2}} \right) \,
        \max\left(\frac{1-\beta_{1}}{\xi_{1}}, \frac{1-\beta_{2}}{\xi_{2}} \right) \\
        &= \frac{1}{2} \left(\frac{1 +
          \max\left( \frac{1-\beta_{1}}{\alpha_{1}}, \frac{1-\beta_{2}}{\alpha_{2}} \right)/c}
          {(1/c) +\min\left(\frac{\alpha_{1}}{1-\beta_{1}}, \frac{\alpha_{2}}{1-\beta_{2}}
          \right)}\right) \\
        &= {\max\left( \frac{1-\beta_{1}}{2\alpha_{1}}, \frac{1-\beta_{2}}{2\alpha_{2}}
          \right)}.
      \end{aligned}
    \end{equation}
    Moreover, we have the log powers, cf. \eqref{eq:sum-exp-log-powers},
    \[
      v =
      \begin{cases}
        2 & \beta_{1}=\beta_{2}=1, \\
        1 & \beta_{1}=1 < \beta_{2} \textnormal{ or }
            \beta_{2}=1 < \beta_{1}
            \textnormal{ or }
            \frac{1-\beta_{1}}{\alpha_{1}} = \frac{1-\beta_{2}}{\alpha_{2}} > 0,
        \\
        0 & \textnormal{otherwise},
      \end{cases}
    \]
    and
    \[
      \begin{aligned}
        \tilde v %
                 &= \begin{cases}
                   u & \frac{1-\beta_{1}}{\alpha_{1}} = \frac{1-\beta_{2}}{\alpha_{2}},  \\
                   0 & \textnormal{otherwise},
                 \end{cases}
      \end{aligned}
    \]
    corresponding to the additional log-term in \eqref{eq:mimc-L-choice}.
    Hence,
    \[
      \begin{aligned}
        \varepsilon^{-2}
        {\left(\sum_{\boldEll \in \cI_{L}} \sqrt{\widehat V_{\boldEll} \widetilde C_\boldEll}
        \right)
        \left(\sum_{\boldEll \in \cI_{L}} C_{\boldEll}\sqrt{\widehat V_{\boldEll} / \widetilde C_\boldEll}\right)}
        & = \mathcal O(\varepsilon^{-2-2u} \lvert  \log_{2}(\varepsilon^{-1}) \rvert^{\mathfrak l + 2v + 2\tilde v}).
      \end{aligned}
    \]
  \item[The case $\beta_2 > 1$] Then, for any \(0 < \delta < \beta_{2}-1\),
    \[
      \begin{aligned}
        \sum_{\boldEll \in \cI_{L}} C_{\boldEll} \sqrt{\widehat V_{\boldEll}/ \widetilde C_\boldEll}
        &\lesssim \sum_{\boldEll \in \cI_{L}} 2^{(1-\beta_{1})\ell_{1}/2 + (1-\beta_{2})\ell_{2}/2} \ell_{2}^{\mathfrak l} \\
        & \lesssim \sum_{\boldEll \in \cI_{L}} 2^{(1-\beta_{1})\ell_{1}/2 + (1-\beta_{2}+\delta)\ell_{2}/2} \\
        & = \mathcal O(\varepsilon^{-u} \log_{2}(\varepsilon^{-1})^{v + \tilde v}),
      \end{aligned}
    \]
    where, since \(\beta_{2}-1 < \beta_{1}-\delta-1 < 0\), we again have
    \[
      \begin{aligned}
        u &= \max\left(0, \max\left( \frac{\xi_{1}}{\alpha_{1}}, \frac{\xi_{2}}{\alpha_{2}} \right) \,
                \max\left(\frac{1-\beta_{1}}{2\xi_{1}}, \frac{1-\beta_{2}+\delta}{2\xi_{2}} \right)\right) \\
              &= \max\left(0, \frac{1-\beta_{1}}{2 \alpha_{1}}, \frac{1-\beta_{2}}{2 \alpha_{2}} \right),
      \end{aligned}
    \]
    and, in this case, since \(\beta_{2}>1\),
    \[
      v =
      \begin{cases}
        1 & \beta_{1}=1, \\
        0 & \textnormal{otherwise}.
      \end{cases}
    \]
    Hence,
    \[
      \begin{aligned}
        \varepsilon^{-2}
        {\left(\sum_{\boldEll \in \cI_{L}} \sqrt{\widehat V_{\boldEll} \widetilde C_\boldEll}
        \right)
        \left(\sum_{\boldEll \in \cI_{L}} C_{\boldEll}\sqrt{\widehat V_{\boldEll} / \widetilde C_\boldEll}\right)}
        & = \mathcal O(\varepsilon^{-2-2u} \lvert  \log_{2}(\varepsilon^{-1}) \rvert^{2 v + 2\tilde v}).
      \end{aligned}
    \]
  \end{description}
  Finally, we consider the second term in \eqref{eq:mimc-cost-general} and to
  check which of the two terms dominates the other, for sufficiently small
  \(\varepsilon\). Using \eqref{eq:sum-exp} and substituting \eqref{eq:mimc-L-choice},
  yields
  \begin{equation}\label{eq:mimc-min-cost}
    \begin{split}
      \sum_{\boldEll \in \cI_{L}} C_{\boldEll}
      &\lesssim L^{\mathfrak l} \sum_{\boldEll \in \cI_{L}} 2^{\ell_{1}+\ell_{2}} \\
      &\lesssim L^{\mathfrak l + v_{1}}  2^{L / \min(\xi_{1},\xi_{2})} \\
      &\lesssim \varepsilon^{-{\max(\xi_{1}/\alpha_{1},\xi_{2}/\alpha_{2}) / \min(\xi_{1},\xi_{2})}} \log_{2}(\varepsilon^{-1})^{\mathfrak l + v_{1}+\tilde v_{1}} \\
      &\lesssim \varepsilon^{-u_{1}} \log_{2}(\varepsilon^{-1})^{\mathfrak l + v_{1}+\tilde v_{1}},
    \end{split}
  \end{equation}
  for \(u_{1} := {\max(\xi_{1}/\alpha_{1},\xi_{2}/\alpha_{2}) / \min(\xi_{1},\xi_{2})}\),
  \[
    v_{1} =
    \begin{cases}
      1 & \alpha_{1} - \frac{\beta_{1}}{c} = \alpha_{2} - \frac{\beta_{2}}{c} \\
      0 & \textnormal{otherwise},
    \end{cases}
    \]
    and
    \[
    \begin{aligned}
      \tilde v_{1} &=
                     \begin{cases}
                       u_{1} & \frac{1-\beta_{1}}{\alpha_{1}}=
                                       \frac{1-\beta_{2}}{\alpha_{2}} \\
                       0 & \textnormal{otherwise}.
                     \end{cases}
    \end{aligned}
    \]
  For \(c \geq 2\) and recalling that \(\alpha_{i} \geq \beta_{i}/2 \), we have
  \begin{equation}\label{eq:rate-1-bound}
    \begin{aligned}
      u_{1} &:=  \max\left( \frac{\xi_{1}}{\alpha_{1}}, \frac{\xi_{2}}{\alpha_{2}} \right) \Big /
              \min\left(\xi_{1}, \xi_{2} \right)\\%
            &=  \frac{c + \max((1-\beta_{1})/\alpha_{1}, (1-\beta_{2})/\alpha_{2})  }
              { 1 + \min\left(c \, \alpha_{1}-\beta_{1},
              c \, \alpha_{2}-\beta_{2} \right)}\\
            &\leq \frac{c}{1 + (c/2-1)\min\left(\beta_{1}, \beta_{2} \right)} + \max\left(\frac{1-\beta_{1}}{\alpha_{1}}, \frac{1-\beta_{2}}{\alpha_{2}}\right).
      \end{aligned}
    \end{equation}
    When \(\min\left(\beta_{1}, \beta_{2} \right) > 1\), we bound the first term by
    \(2\) since \(c \geq 2\), otherwise we set \(c=2\) to obtain the same bound.
    We now distinguish between four cases:
  \begin{description}
  \item [When \(\min(\beta_{1}, \beta_{2}) > 1\)] Then, \(u_{1} < 2 + 2 u\)
    and hence
    \[
      \sum_{\boldEll \in \cI_{L}} C_{\boldEll} = \cO( \varepsilon^{-2 - 2 u}).
    \]
  \item [When \(\beta_{1}= \beta_{2} = 1\)] In this case, \(u_{1} \leq 2 + 2 u = 2\),
    \(v_{1} + \tilde v_{1} \leq u_{1} + 1 \leq 3 < 4 = 2v + 2\tilde v\).
  \item [When \(1 \in \lbrace \beta_{1}, \beta_{2} \rbrace\) and \(\beta_{1} \neq \beta_{2}\)] In this case,
    \(u_{1} \leq 2 + 2 u\), \(v_{1} + \tilde v_{1} = 1 < 2 = 2v + 2\tilde v \).

  \item [When \(\max(\beta_{1}, \beta_{2}) < 1\)] %
    In this case, either \(v_{1} = 1\) and \(\tilde v_{1} = 0\), or \(v_{1} =
    0\) and \(\tilde v_{1} = u_{1}\) or \(v_{1} = 1\) and \(\tilde v_{1} =
    u_{1} = 1/\alpha_{1}\) (since \(\alpha_{1}=\alpha_{2}\) and \(\beta_{1}=\beta_{2}\) in this last case).
    In all cases, \(v_{1} + \tilde v_{1} \leq 2u + 3\) and \(v + \tilde v \leq 2u + 2\).

  \end{description}
\end{proof}

The following lemma obtains slightly better complexity rates when there is a
stricter bound that improves the rate multiplying \(\ell_{1}\) by \(\vartheta > 0\) in
\eqref{eq:strong-error-coupling-assumption}, but reduces the rate multiplying
\(\ell_{2}\) by \(\vartheta \upsilon > 0\). Looking at Lemma~\ref{lem:general-mimc-cost-error},
we see that the only case a smaller rate multiplying \(\ell_{1}\) could improve
the computational complexity (beyond a logarithmic factor) is when \(\beta_{1}<1\)
and \((1-\beta_{2})/\alpha_{2} < (1-\beta_{1})/\alpha_{1}\).

\begin{lemma}\label{lem:general-mimc-cost-error-min}
  Given the same setup as Lemma~\ref{lem:general-mimc-cost-error}, and
  assuming the stricter bound on the variance
  \begin{equation}\label{eq:mimc-max-rates-var}
    \|\Delta_{\boldEll}\Psi(X)\|_{L^2(\Omega, U)}^2
    \le C\,  2^{-\beta_1 \ell_{1} -\beta_{2} \ell_{2} - \vartheta \max( \ell_{1} -\upsilon \ell_{2}, 0)},
  \end{equation}
  for some $\vartheta, \upsilon, C, \beta_1, \beta_2 > 0$ and all $\boldEll = (\ell_{1}, \ell_{2}) \in
  \N_0^2$. Then there exists \(\alpha_{i} \geq \beta_{i}/2\) for \(i=1,2\), such that
  \begin{equation}\label{eq:mimc-max-rates-bias}
    \|\E[\Delta_{\boldEll}\Psi(X)]\|_{U}
    \le C\,  2^{-\alpha_1 \ell_{1} -\alpha_{2} \ell_{2} - (\vartheta/2) \max( \ell_{1} -\upsilon \ell_{2}, 0)}.
  \end{equation}
  Moreover, there exists an index set $\cI \subset \N_0^2$, depending on \(\varepsilon\) and
  \(\lbrace \alpha_{i}, \beta_{i}\rbrace_{i=1}^{2}\) that fulfills the telescoping
  constraint~\eqref{eq:telescoping-constraint} and a sequence
  $(m_\boldEll)_{\boldEll \in \cI} \subset \N$, also depending on $\varepsilon$, such that an
  MIMC estimator \eqref{eq:mimcei} has a mean-square error (MSE) of
  \[
    \|\MIMCest - \E[\Psi(X)]\|_{L^2(\Omega, U)}^2 = \mathcal O(\varepsilon^2)
  \]
  and, given that \(\cost(\Delta_{\boldEll} \Psi(X)) \simeq \ell_{2}^{\mathfrak{l}} 2^{\ell_{1}+\ell_{2}}\) for
  \({\mathfrak{l}} \in \lbrace0,1\rbrace\), then \[\cost(\MIMCest) = \cO(\varepsilon^{-2 - 2 u}\, |\log_2 \varepsilon|^{4
    +\mathfrak l+ 2 u})\] where
    \[
      2 u = \max\left(0, \frac{1-\beta_{1}-\vartheta}{\alpha_{1}+\vartheta/2}, \frac{1-\beta_{2}}{\alpha_{2}},
        \frac{(1-\beta_{2}) + \upsilon (1-\beta_{1})} {\alpha_{2} + \upsilon \alpha_{1}}\right).
    \]
\end{lemma}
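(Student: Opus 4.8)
The plan is to reproduce the architecture of the proof of Lemma~\ref{lem:general-mimc-cost-error}, replacing the pure-exponential variance bound by the stricter, piecewise bound $\widehat V_{\boldEll} := 2^{-\beta_1 \ell_{1} - \beta_2 \ell_{2} - \vartheta \max(\ell_{1} - \upsilon \ell_{2}, 0)}$ of \eqref{eq:mimc-max-rates-var}. The weak bound \eqref{eq:mimc-max-rates-bias} is immediate: by Jensen's inequality $\|\E[\Delta_{\boldEll}\Psi(X)]\|_{U} \le \E[\|\Delta_{\boldEll}\Psi(X)\|_{U}] \le \|\Delta_{\boldEll}\Psi(X)\|_{L^2(\Omega,U)}$, so taking the square root of \eqref{eq:mimc-max-rates-var} yields \eqref{eq:mimc-max-rates-bias} with $\alpha_i = \beta_i/2$ at least. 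The error split \eqref{eq:bias-stat-error-mimc}, the sample allocation \eqref{eq:ml-general} with this new $\widehat V_{\boldEll}$, and the statistical-error estimate \eqref{eq:var-bound-mimc-general} then carry over verbatim, since those steps only use $\widehat V_{\boldEll}$ as an abstract upper bound for $\|\Delta_{\boldEll}\Psi(X)\|^2_{L^2(\Omega,U)}$ and this remains valid.

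All genuinely new work sits in the bias and cost sums, where the kink of $\widehat V_{\boldEll}$ along the ray $\ell_1 = \upsilon \ell_2$ must be resolved. First I would take the index set to be a triangle $\cI_L = \{\boldEll \in \N_0^2 : \xi_1 \ell_1 + \xi_2 \ell_2 \le L\}$ as in \eqref{eq:alternative-index-set-shape}, and partition both $\cI_L$ and its complement into the two cones $\mathcal A := \{\ell_1 \le \upsilon \ell_2\}$ and $\mathcal B := \{\ell_1 > \upsilon \ell_2\}$. On $\mathcal A$ the bound $\widehat V_{\boldEll}$ reduces to $2^{-\beta_1 \ell_1 - \beta_2 \ell_2}$ and on $\mathcal B$ to $2^{-(\beta_1+\vartheta)\ell_1 - (\beta_2 - \vartheta\upsilon)\ell_2}$, so each cone contributes a sum of pure exponentials amenable to the geometric-series estimates of Lemma~\ref{lem:sum-bound}. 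The point is that a sum of $2^{\eta_1 \ell_1 + \eta_2 \ell_2}$ over a convex polygonal region is governed, up to a power of $L$, by its values at the extreme vertices; for $\cI_L$ cut by $\mathcal A, \mathcal B$ these are the origin, the two axis-corners, and the corner where the ray $\ell_1 = \upsilon \ell_2$ meets the cutoff line.

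With this decomposition I would run the cost computation \eqref{eq:mimc-cost-general}--\eqref{eq:mimc-cost-factors}: both factors $\sqrt{\widehat V_{\boldEll} \widetilde C_{\boldEll}}$ and $C_{\boldEll}\sqrt{\widehat V_{\boldEll}/\widetilde C_{\boldEll}}$ scale like $2^{(1-\beta_1)\ell_1/2 + (1-\beta_2)\ell_2/2}$ on $\mathcal A$ and like $2^{(1-\beta_1-\vartheta)\ell_1/2 + (1-\beta_2+\vartheta\upsilon)\ell_2/2}$ on $\mathcal B$, the $\ell_2^{\mathfrak l}$ factor being absorbed into a logarithm. Evaluating these exponentials at the three vertices and dividing by the matching weak rate --- exactly the simplification carried out in \eqref{eq:rates-calc} --- the $\ell_2$-axis corner of $\mathcal A$ produces $\frac{1-\beta_2}{\alpha_2}$, the $\ell_1$-axis corner of $\mathcal B$ (whose effective weak rate is $\alpha_1 + \vartheta/2$) produces $\frac{1-\beta_1-\vartheta}{\alpha_1+\vartheta/2}$, and the kink corner, where the per-step exponents combine as $(1-\beta_2)+\upsilon(1-\beta_1)$ against the weak rate $\alpha_2+\upsilon\alpha_1$, produces $\frac{(1-\beta_2)+\upsilon(1-\beta_1)}{\alpha_2+\upsilon\alpha_1}$. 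Taking the maximum with the trivial rate $0$ gives the claimed $2u$. Choosing $\xi_i = \alpha_i + (1-\beta_i)/c$ for suitable $c \ge 2$ and $L$ as in \eqref{eq:mimc-L-choice}, the bias \eqref{eq:exp-bound-mimc} is rendered $\mathcal O(\varepsilon)$ by applying the complement estimate \eqref{eq:sum-exp-compliment} on each cone, and the deterministic term $\sum_{\boldEll \in \cI_L} C_{\boldEll}$ is checked to be dominated by $\varepsilon^{-2-2u}$ through the same four-case analysis as in Lemma~\ref{lem:general-mimc-cost-error}, which also pins the log power down to at most $4 + \mathfrak l + 2u$.

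The hard part will be the bookkeeping of the cone-restricted sums: verifying that a single triangular profile $\xi$ simultaneously controls the bias to $\mathcal O(\varepsilon)$ and attains the optimal cost across both $\mathcal A$ and $\mathcal B$, and in particular confirming that the kink corner genuinely contributes the combined rate $\frac{(1-\beta_2)+\upsilon(1-\beta_1)}{\alpha_2+\upsilon\alpha_1}$ and becomes the binding constraint precisely in the regime $\beta_1 < 1$, $(1-\beta_2)/\alpha_2 < (1-\beta_1)/\alpha_1$ where the improved $\ell_1$-decay would otherwise be wasted.
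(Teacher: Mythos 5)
Your proposal follows essentially the same route as the paper: Jensen for the bias, the same sample allocation and statistical-error bound, and then a reduction of the piecewise-exponential sums to their values at the four extreme points (origin, two axis corners, kink corner) of the truncated index set. The paper packages this as a single linear-programming observation (the maximum of $\eta_1\ell_1+\eta_2\ell_2-\vartheta\max(\ell_1-\upsilon\ell_2,0)$ over the triangle is attained at a vertex, yielding \eqref{eq:sum-bound-with-max}--\eqref{eq:sum-bound-inverted-with-max}), whereas you split into the cones $\ell_1\lessgtr\upsilon\ell_2$ and apply Lemma~\ref{lem:sum-bound} on each; these are equivalent and produce the same four candidate rates. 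One detail to fix: you cannot take $L$ ``as in \eqref{eq:mimc-L-choice}'' with the old $\theta=\max(\xi_1/\alpha_1,\xi_2/\alpha_2)$ --- that makes the bias $\cO(\varepsilon)$ but overshoots $L$ and can strictly worsen the cost exponent beyond the claimed $2u$. The level must be chosen as in \eqref{eq:mimc-L-choice-max-rates} with $\theta=\max\bigl(\xi_1/(\alpha_1+\vartheta/2),\,\xi_2/\alpha_2,\,(\xi_2+\upsilon\xi_1)/(\alpha_2+\upsilon\alpha_1)\bigr)$, i.e.\ using the improved per-direction weak rates that your own vertex bookkeeping already identifies.
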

\begin{proof}
  First we state a generic result. For \(\eta_{1}, \eta_{2} \in \R\) and \(\xi_{1},
  \xi_{2}, \vartheta, \upsilon, L > 0\), we have the bound
  \[ %
    \max_{\ell \in \N^{2}, \xi_{1} \ell_{1} + \xi_{2} \ell_{2} < L}({\eta_{1} \ell_{1} +\eta_{2} \ell_{2}
      - \vartheta \max( \ell_{1} -\upsilon \ell_{2}, 0)}) \leq L \max\left(0, \frac{\eta_{1}-\vartheta}{\xi_{1}},
      \frac{\eta_{2}}{\xi_{2}}, \frac{\eta_{2} + \upsilon \eta_{1}}{\xi_{2}+\upsilon\xi_{1}}
    \right). %
  \]
  This can be obtained by noting that outside of the boundaries \(\ell_{1} = 0\),
  \(\ell_{2} = 0\), \(\xi_{1} \ell_{1} + \xi_{2} \ell_{2} = L\) and \(\ell_{1} - \upsilon \ell_{2} =
  0\), one is maximizing a linear function. Hence the maximum is achieved at
  one of the four points at which these lines intersect\footnote{The proof is
    based on an argument by M.B. Giles in a soon-to-published work.}. Hence,
  \begin{equation}\label{eq:sum-bound-with-max}
    \begin{aligned}
      \sum_{\substack{\ell \in \N^{2} \\
      \xi_{1} \ell_{1} + \xi_{2} \ell_{2} \leq L}} \left(2^{\eta_{1} \ell_{1} +\eta_{2} \ell_{2} - \vartheta \max(
      \ell_{1} -\upsilon \ell_{2}, 0)}\right)^{1/2}
      &\lesssim L^{2} \, 2^{
        (L/2) \max\left(0, \frac{\eta_{1}-\vartheta}{\xi_{1}},
        \frac{\eta_{2}}{\xi_{2}}, \frac{\eta_{2} + \upsilon \eta_{1}}{\xi_{2}+\upsilon\xi_{1}}
        \right)}.
    \end{aligned}
  \end{equation}
  Using the same reasoning, we can bound for \(\eta_{1}, \eta_{2} > 0\),
  \begin{equation}\label{eq:sum-bound-inverted-with-max}
    \begin{aligned}
      \sum_{\substack{\ell \in \N^{2} \\
      \xi_{1} \ell_{1} + \xi_{2} \ell_{2} > L}} \left(2^{-\eta_{1} \ell_{1} -\eta_{2} \ell_{2} - \vartheta \max(
      \ell_{1} -\upsilon \ell_{2}, 0)}\right)^{1/2}
      &\lesssim L \, 2^{
        -(L/2) \min\left(0, \frac{\eta_{1}+\vartheta}{\xi_{1}},
        \frac{\eta_{2}}{\xi_{2}}, \frac{\eta_{2} + \upsilon \eta_{1}}{\xi_{2}+\upsilon\xi_{1}}
        \right)}.
    \end{aligned}
  \end{equation}

  We now turn to proving the claims of the lemma. First,
  \eqref{eq:mimc-max-rates-bias} follows from \eqref{eq:mimc-max-rates-var}
  using Jensen's inequality. We then make the same choice of \(\mathcal I \equiv
  \mathcal I_{L}\) in \eqref{eq:alternative-index-set-shape}, and set
  \begin{equation}\label{eq:mimc-L-choice-max-rates}
    L = \max(\lceil
    \theta(\log_2(2 \varepsilon^{-1}) +
    \log_{2}({\theta}\log_2( 2 \varepsilon^{-1}))
    )\rceil , 1),
  \end{equation}
  for
  \[
    \theta = \max\left(\frac{\xi_{1}}{\alpha_{1}+\vartheta/2}, \frac{\xi_{2}}{\alpha_{2}},
      \frac{\xi_{2}+\upsilon\xi_{1}}{\alpha_{2} + \upsilon \alpha_{1}} \right),
  \]
  and \(\lbrace m_{\boldEll} \rbrace_{\boldEll \in \mathcal I_{L}}\) as in
  \eqref{eq:ml-general}, but for \(\widehat V_{\boldEll} = 2^{-\beta_1 \ell_{1}
    -\beta_{2} \ell_{2} - \vartheta \max( \ell_{1} -\upsilon \ell_{2}, 0)}\), so that a similar
  calculation to the proof of Lemma~\ref{lem:general-mimc-cost-error}, using
  \eqref{eq:sum-bound-inverted-with-max}, shows that the MSE is of
  \(\cO(\varepsilon^{2})\). To bound the computational cost, we set \(\eta_{i} :=
  (1-\beta_{i})\), and obtain
  \[
    \sum_{\boldEll \in \cI_{L}} C_{\boldEll} \sqrt{\widehat V_{\boldEll} /
      \widetilde C_\boldEll} \lesssim L^{\mathfrak l} \sum_{\boldEll \in \cI_{L}}
    \left(2^{\eta_{1} \ell_{1} + \eta_{2} \ell_{2} - \vartheta \max( \ell_{1} -\upsilon \ell_{2},
        0)}\right)^{1/2}
  \]
  and a similar bound on \(\sum_{\boldEll \in \cI_{L}} \sqrt{\widehat V_{\boldEll}
    \widetilde C_\boldEll}\). Using \eqref{eq:sum-bound-with-max}, we obtain
  \[
    \left( \sum_{\boldEll \in \cI_{L}} \sqrt{\widehat V_{\boldEll} \widetilde
        C_\boldEll} \right) \left( \sum_{\boldEll \in \cI_{L}} C_{\boldEll}
      \sqrt{\widehat V_{\boldEll} / \widetilde C_\boldEll} \right)
    \lesssim L^{4+\mathfrak l}
    \, 2^{L \max\left(0, \frac{\eta_{1}-\vartheta}{\xi_{1}},
      \frac{\eta_{2}}{\xi_{2}}, \frac{\eta_{2} + \upsilon \eta_{1}}{\xi_{2}+\upsilon\xi_{1}}
        \right)}.
  \]
  For \(L\) as in \eqref{eq:mimc-L-choice-max-rates}, we note that for any
  constant \(a \geq 0\)
  \[
    L^{4+\mathfrak l} \, 2^{a L} \lesssim \lvert \log(\varepsilon) \rvert^{4+\mathfrak l} \:
    \left(\lvert \log(\varepsilon) \rvert \: \varepsilon^{-1}\right)^{\theta a}.
  \]
  We now make the choice \(\xi_{i} = \alpha_{i} + \eta_{i}/2 = \alpha_{i} + (1-\beta_{2})/2\).
  Then, we obtain
  \begin{equation}\label{eq:mimc-max-rates-sampling-cost}
    \left( \sum_{\boldEll \in \cI_{L}} \sqrt{\widehat V_{\boldEll} \widetilde
        C_\boldEll} \right) \left( \sum_{\boldEll \in \cI_{L}} C_{\boldEll}
      \sqrt{\widehat V_{\boldEll} / \widetilde C_\boldEll} \right)
    \lesssim \lvert \log(\varepsilon) \rvert^{4+2u+\mathfrak l} \: \varepsilon^{- 2u},
  \end{equation}
  for, using a similar calculation to \eqref{eq:rates-calc},
  \[
    \begin{aligned}
      2u &:=%
           \max\left(0, \frac{\eta_{1}-\vartheta}{\alpha_{1}+\vartheta/2},
           \frac{\eta_{2}}{\alpha_{2}},
           \frac{\eta_{2}+\upsilon\eta_{1}}{\alpha_{2} + \upsilon \alpha_{1}} \right).
    \end{aligned}
  \]
  By a similar calculation to the proof in
  Lemma~\ref{lem:general-mimc-cost-error}, we bound \eqref{eq:mimc-min-cost}
  by
  \begin{equation}
    \sum_{\ell \in \mathcal I_{L}} C_{\boldEll}
    \lesssim \varepsilon^{-u_{1}} \log_{2}(\varepsilon^{-1})^{\mathfrak l + v_{1}+\tilde v_{1}},
  \end{equation}
  for
  \[
    \begin{aligned}
      u_{1} &:=  {\max(\xi_{1}/(\alpha_{1}+\vartheta/2),\xi_{2}/\alpha_{2}) / \min(\xi_{1},\xi_{2})}\\%
            &=  \frac{2 + \max\left(\frac{1-\beta_{1}-\vartheta}{\alpha_{1}+\vartheta/2}, \frac{1-\beta_{2}}{\alpha_{2}}\right)  }
              { 1 + \min\left(2 \, \alpha_{1}-\beta_{1},
              2 \, \alpha_{2}-\beta_{2} \right)}\\
            &\leq 2 + \max\left(\frac{1-\beta_{1}-\vartheta}{\alpha_{1}+\vartheta/2}, \frac{1-\beta_{2}}{\alpha_{2}}\right)
    \end{aligned}
  \]
  and \(v_{1} + \tilde v_{1} \leq 3 + 2 u\).
  Substituting these bounds in \eqref{eq:mimc-cost-general} yields claimed
  computational complexity.
\end{proof}

Next, we include a proof of Lemma~\ref{lem:strong-error-coupling}
\begin{proof}[Proof of Lemma~\ref{lem:strong-error-coupling}]
  We treat the two different QoI settings~\eqref{eq:QoI_def1} and~\eqref{eq:QoI_def2}
  separately.

  For $\Psi(X) = \psi(X(T))$, a similar expansion as in the proof of
  Theorem~\ref{thm:second-order-diff} yields
  	\begingroup
	\allowdisplaybreaks
	\begin{align*}
          &\Psi(X_K^{\overline{M}})  - \Psi(X_N^{\overline{M}})  - \Psi(X_K^M) + \Psi(X_N^M) \\
          & = \psi(X_K^{\overline{M}}(T))  - \psi(X_N^{\overline{M}}(T))  - \psi(X_K^M(T)) + \psi(X_N^M(T)) \\
          & = \int^1_0 U_T(\lambda)  \cE^{\overline{M},M}_{K,N}(T) \dd \lambda
          + \int^1_0 \int^1_0 \tilde U_T(\lambda, \tilde \lambda)
          \big(X^{\overline{M}}_N(T) - X^M_N(T) \big) \big(X^M_K(T) - X^M_N(T) \big) \dd \lambda \dd \tilde \lambda,
	\end{align*}%
	\endgroup
	where
	\begin{align*}
	  U_t(\lambda) = \psi'\Big( (1-\lambda)
          \big(X^{\overline{M}}_N(t) + X^M_K(t) - X^M_N(t)\big) + \lambda X^{\overline{M}}_K(t) \Big) \quad \text{for} \quad t \in [0,T]
	\end{align*}
	and
	\begin{align*}
		\tilde U_t(\lambda, \tilde \lambda) = \psi''\Big(X^M_N(t) &+ \lambda \big(X^{\overline{M}}_N(t)-X^M_N(t)\big) + \tilde \lambda \big(X^M_K(t)-X^M_N(t)\big) \Big) \quad \text{for} \quad t \in [0,T].
	\end{align*}
        By Assumption~\ref{ass:qoi} and H\"older's inequality,
        \[
        \begin{split}
        \|\Psi(&X_K^{\overline{M}})  - \Psi(X_N^{\overline{M}})  - \Psi(X_K^M) + \Psi(X_N^M)\|_{L^2(\Omega, U)}\\
        &  \le C \Big(\|\cE^{\overline{M},M}_{K,N}(T)\|_{L^2(\Omega, H)} +
        \|X^{\overline{M}}_N(T) - X^M_N(T)\|_{L^4(\Omega, H)} \|X^M_K(T) - X^M_N(T)\|_{L^4(\Omega, H)} \Big),
        \end{split}
        \]
        and inequality~\eqref{eq:strong-error-coupling} follows by Theorem~\ref{thm:second-order-diff} and
        Lemma~\ref{lem:appr}\ref{lem:appr:space-error} and~\ref{lem:appr:time-error}.

        The first inequality of~\eqref{eq:strong-error-coupling2} follows directly from Lemma~\ref{lem:appr}\ref{lem:appr:space-error} along with the Lipschitz regularity of \(\Psi\). For the second inequality, we may argue exactly as in the proof of the bounds of Lemma~\ref{lem:appr}\ref{lem:appr:time-error} to see that
          $\| X_N^M(t) - X_N^{\bar M}(t) \|_{L^{10}(\Omega,H)} \le C \min(\lambda_N^{\max(1-\kappa,0)} M^{-1/2}, M^{-\min(1,\kappa)/2})$ for a constant \(C\) that does not depend on \(N\) or \(M\). The claim therefore follows by setting \(C_N := C \lambda_N^{\max(1-\kappa,0)}\).

        For the setting $\Psi(X) = \int_0^T \psi(X(t)) \dd t$, Jensen's inequality yields
        \[
        \begin{split}
          &\|\Psi(X_K^{\overline{M}})  - \Psi(X_N^{\overline{M}})  - \Psi(X_K^M) + \Psi(X_N^M)\|_{L^2(\Omega, U)} \\     & \le
        \int_0^T \| \psi(X_K^{\overline{M}}(t))  - \psi(X_N^{\overline{M}}(t))  - \psi(X_K^M(t)) + \psi(X_N^M(t)) \|_{L^2(\Omega, U)} \dd t,
        \end{split}
        \]
        and an expansion argument as above implies there
        exists a constant $C< \infty$ such that for all $t \in [0,T]$,
        \begin{equation}\label{eq:doubleDiffPsiProofIneq1}
        \begin{split}
          \qquad\| \psi(X_K^{\overline{M}}(t))  - \psi(X_N^{\overline{M}}(t))  - &\psi(X_K^M(t)) + \psi(X_N^M(t)) \|_{L^2(\Omega, U)} \\
          &\le C  \Big(\|X_K^{\overline{M}}(t))  - X_N^{\overline{M}}(t)  - X_K^M(t) + X_N^M(t) \|_{L^2(\Omega, H)} \\
          & \qquad  +
          \|X^{\overline{M}}_N(t) - X^M_N(t)\|_{L^4(\Omega, H)} \|X^M_K(t) - X^M_N(t)\|_{L^4(\Omega, H)} \Big)\,.
          \end{split}
        \end{equation}
       From the representation
        \begingroup
        \allowdisplaybreaks
	\begin{align*}
          X_K^{\overline{M}}(t))  &- X_N^{\overline{M}}(t)  - X_K^M(t) + X_N^M(t) \\
          &=\int^{t}_0 P_K (I-P_N) S(t-s) \big( F(X^{\overline{M}}_K(\lfloor s \rfloor_{{\overline{M}}^{-1}})) - F(X^M_K(\lfloor s \rfloor_{M^{-1}})) \big) \dd s \\
	  &\quad+\int^{t}_0 P_N S(t-s) \big( F(X^{\overline{M}}_K(\lfloor s \rfloor_{M^{-1}})) - F(X^{\overline{M}}_N(\lfloor s \rfloor_{M^{-1}})) \\
	  &\hspace{10em}- F(X^M_K(\lfloor s \rfloor_{M^{-1}})) + F(X^M_N(\lfloor s \rfloor_{M^{-1}})) \big) \dd s \\
	  &\quad+\int^{t}_0 P_N S(t-s) \big( F(X^{\overline{M}}_K(\lfloor s \rfloor_{{\overline{M}}^{-1}})) - F(X^{\overline{M}}_N(\lfloor s \rfloor_{{\overline{M}}^{-1}})) \\
	  &\hspace{10em}- F(X^{\overline{M}}_K(\lfloor s \rfloor_{M^{-1}})) + F(X^{\overline{M}}_N(\lfloor s \rfloor_{M^{-1}})) \big) \dd s \\
	  &\quad + \int^{t}_0 S(t-s) G \cE^{{\overline{M}},M}_{K,N}(s) \dd W(s) + \int^{t}_0 S(t-s) G\cH^{{\overline{M}},M}_{K,N}(s) \dd W(s) \\
	  &=: \mathrm{I}^1_F(t ) + \mathrm{I}^2_F(t) + \mathrm{I}^3_F(t) + \mathrm{I}^1_W(t) + \mathrm{I}^2_W(t),
        \end{align*}
        \endgroup and the proof of Theorem~\ref{thm:second-order-diff}
        readily extends to the currently considered continuum setting
        of all $t \in [0,T]$, meaning there exists a constant
        $C<\infty$ such that
        \begin{equation}\label{eq:extensionSecOrdDiff}
        \begin{split}
        \max_{t \in [0,T]} \Big\| \sum_{i=1}^3 \mathrm{I}^i_F(t )  + \mathrm{I}^1_W(t) + \mathrm{I}^2_W(t) &\Big\|_{L^2(\Omega, H)}^2\\
        &\le
        \begin{cases}
          C \min(\lambda_{N}^{-2\kappa},  \, M^{-2\kappa} ) & \text{if} \quad \kappa \in (0,1/2),\\
          C \min(\lambda_{N}^{-(2 \kappa -1)} M^{-1}, \, \lambda_N^{-2\kappa}) & \text{if} \quad \kappa \in [1/2,1),\\
            C \min(\lambda_{N}^{-\kappa} M^{-1}, \lambda_N^{-2\kappa}) & \text{if} \quad \kappa \in [1,2)\,.
        \end{cases}
        \\
        &\le C \lambda_{N}^{-\kappa}  M^{-\min(\kappa,1)}.
        \end{split}
        \end{equation}
        And Lemma~\ref{lem:appr}\ref{lem:appr:space-error} and~\ref{lem:appr:time-error}
        imply that
        \[
        \begin{split}
        \max_{t \in [0,T]}
        \|X^{\overline{M}}_N(t) - X^M_N(t)\|_{L^4(\Omega, H)}& \|X^M_K(t) - X^M_N(t)\|_{L^4(\Omega, H)}\\
        &\le
        \begin{cases}
          C \min(M^{-\kappa},\lambda_{N}^{-\kappa/2} M^{-\kappa/2}) & \text{if} \quad \kappa \in (0,1/2),\\
          C \min( \lambda_{N}^{-(2 \kappa -1)/2} M^{-1/2}, \lambda_{N}^{-\kappa/2} M^{-\kappa/2}) & \text{if} \quad \kappa \in [1/2,1),\\
            C \lambda_{N}^{-\kappa/2} M^{-1/2} & \text{if} \quad \kappa \in [1,2)\,.
        \end{cases}
        \end{split}
        \]
        The last two inequalities and~\eqref{eq:doubleDiffPsiProofIneq1} verify~\eqref{eq:strong-error-coupling}.

        Next, for the setting with $\psi \in \cL(H,U)$, it holds for the constant $c_\psi = \|\psi\|_{\cL(H,U)}<\infty$
        and all $t \in [0,T]$ that
        \[
        \begin{split}
          \| \psi(X_K^{\overline{M}}(t))) & - \psi(X_N^{\overline{M}}(t))  - \psi(X_K^M(t)) + \psi(X_N^M(t))\|_{L^2(\Omega, U)}^2 \\
          &\le
        c_\psi      \| X_K^{\overline{M}}(t))  - X_N^{\overline{M}}(t)  - X_K^M(t) + X_N^M(t)\|_{L^2(\Omega, H)}^2\\
        &\le c_\psi \left\| \sum_{i=1}^3 \mathrm{I}^i_F(t )  + \mathrm{I}^1_W(t) + \mathrm{I}^2_W(t)\right\|_{L^2(\Omega, H)} \, .
        \end{split}
        \]
        And the extension of Theorem~\ref{thm:second-order-diff} described in~\eqref{eq:extensionSecOrdDiff}
        verifies~\eqref{eq:strong-error-coupling2}.
\end{proof}
\section{Positive and negative norm estimates for nonlinear composition mappings}
\label{sec:appendix}

In this appendix we derive positive and negative norm estimates for mappings of the type
\begin{equation}
	\label{eq:composition-mapping}
	F(u)(x) = f(u(x)), \quad \text{for a.e. } x \in \cD
\end{equation}
and a sufficiently smooth function $f \colon \R \to \R$, which are relevant for Assumption~\ref{ass:reg} in the main part of the paper. Results like these are commonly used in the numerical SPDE literature, for example in~\cite[Example~3.2]{W16} and \cite[Appendix~A]{KLP20}. However, these results are too restrictive for our needs. In particular, the results applicable when the regularity parameter $\kappa \in (1,2)$ can not, as far as we are aware, be found in the literature.

The derivation is done in the context of Example~\ref{ex:heat}, where $A = - \Delta$ with homogeneous zero Dirichlet boundary conditions, but the arguments could be modified to accommodate more general symmetric elliptic operators. We recall that $H=L^2(\cD)$ for a bounded domain $\cD \subset \R^d$, $d=1,2,3$, which is either convex or has $\cC^2$ boundary $\partial \cD$. It is equipped with the standard $L^2$ inner product $\langle \cdot ,  \cdot \rangle$ and induced norm $\| \cdot \|$.

We recall (see, e.g., \cite[Chapter~1]{AP95}) that when $f$ is once continuously differentiable with a bounded derivative $f'$, the G\^ateaux derivative of $F \in \cG^1(H,H)$ is given by
\begin{equation*}
	(F'(u)v)(x) = f'(u(x))v(x), \quad \text{for }u,v \in H \text{ and a.e. } x \in \cD.
\end{equation*}

In light of this, the following result (corresponding to Assumption~\ref{ass:reg}\ref{ass:derivative-bounded}) is immediate.
\begin{proposition}
	\label{prop:F-Fprime-H-bound}
	Let $f$ be continuously differentiable with a bounded derivative $f'$. Then, there is a constant $C < \infty$ such that $\|F'(u)v\|\le C\|v\|$ for all $u,v \in H$.
\end{proposition}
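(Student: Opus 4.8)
The plan is to reduce the claim to a pointwise multiplier estimate followed by a single application of the monotonicity of the integral. Since $f$ is continuously differentiable with bounded derivative, the Gateaux derivative of the composition operator $F$ has already been identified in the text immediately preceding the proposition as $(F'(u)v)(x) = f'(u(x)) v(x)$ for $u,v \in H$ and a.e.\ $x \in \cD$. The essential point is that the uniform boundedness of $f'$ transfers directly into a uniform pointwise bound on this multiplier, independently of the base point $u$.

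First I would set $C := \sup_{y \in \R} |f'(y)| < \infty$, which is finite by hypothesis. Then for any fixed $u,v \in H$ and almost every $x \in \cD$ one has $|(F'(u)v)(x)| = |f'(u(x))|\,|v(x)| \le C\,|v(x)|$. Squaring, integrating over $\cD$, and recalling that $\|\cdot\|$ is the $L^2(\cD)$ norm yields
\begin{equation*}
  \|F'(u)v\|^2 = \int_{\cD} |f'(u(x))|^2 |v(x)|^2 \dd x \le C^2 \int_{\cD} |v(x)|^2 \dd x = C^2 \|v\|^2,
\end{equation*}
and taking square roots gives the desired inequality $\|F'(u)v\| \le C\|v\|$ with a constant independent of both $u$ and $v$.

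There is no genuine obstacle here: once the explicit form of the derivative is in hand, the statement follows immediately, the only additional ingredient being the uniform sup-bound on $f'$. The substantive difficulties in the appendix lie instead in the higher-order estimates and especially the negative-norm bounds (corresponding to Assumption~\ref{ass:reg}\ref{ass:derivative-negnorm}--\ref{ass:second-derivative-bounded}), where fractional Sobolev regularity and the Dirichlet boundary conditions must be tracked carefully; for this first proposition, however, a direct pointwise multiplication estimate in $L^2(\cD)$ suffices.
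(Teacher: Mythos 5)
Your proof is correct and matches the paper's intent exactly: the paper states this proposition as ``immediate'' from the pointwise multiplier form $(F'(u)v)(x) = f'(u(x))v(x)$, and your argument of bounding $|f'|$ by its supremum and integrating in $L^2(\cD)$ is precisely the omitted computation.
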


Next, we need to recall some results on Sobolev spaces. We employ the notation $W^{r,p}:= W^{r,p}(\cD)$, $r \in [0,\infty), p \in (1,\infty),$ for the Sobolev space equipped with the norm
\begin{equation*}
	\|u\|_{W^{r,p}} := \Big(\|u\|_{W^{m,p}}^p + \sum_{|\alpha| = m} \int_{\cD \times \cD} \frac{|D^\alpha u(x) - D^\alpha u(y)|^p}{|x-y|^{d+p\sigma}}\dd x \dd y\Big)^{1/p},
\end{equation*}
for non-integer $r = m + \sigma$, $m \in \N_0, \sigma \in (0,1)$, where $\|\cdot\|_{W^{m,p}}$ is the standard integer order Sobolev norm and $|\cdot|$ the Euclidean norm on $\R^d$, see also~\cite[Definition~1.3.2.1]{G85}.
We will make use of a Sobolev embedding theorem, namely
\begin{equation}
	\label{eq:sobolev-embedding-1}
	W^{s,q} \hookrightarrow W^{r,p}, \quad r \le s, \quad r - d/p \le s - d/q, \quad p,q \in [1,\infty),
\end{equation}
where the arrow notation denotes continuous embedding and either $s \in \N_0$ or $s \neq t$.
We also need a result on multiplication in Sobolev spaces. Specifically, provided that
\begin{enumerate}[label=(\roman*)]
	\item $u \in W^{r_0,p_0}$ and $v \in W^{r_1,p_1}$,
	\item \label{eq:sobolev-multiplication:ii}$s \le \min(r_0,r_1)$,
	\item \label{eq:sobolev-multiplication:iii}$s - d/q \le \min(r_0 - d/p_0,r_1-d/p_1)$,
	\item \label{eq:sobolev-multiplication:iv}$s - d/q < r_0 - d/p_0 + r_1-d/p_1$ and
	\item either $\max(p_0,p_1) \le q$ or the inequalities in \ref{eq:sobolev-multiplication:ii}-\ref{eq:sobolev-multiplication:iii} are strict and instead of \ref{eq:sobolev-multiplication:iv} it holds that $s < r_0 + r_1 - d/\min(p_0,p_1)$,
\end{enumerate}
then $u v \in W^{s,q}$ with
\begin{equation}
	\label{eq:sobolev-multiplication}
	\| u v \|_{W^{s,q}} \le C \| u \|_{W^{r_0,p_0}} \| v\|_{W^{r_1,p_1}}
\end{equation}
for a constant \(C\) that do not depend on \(u\) or \(v\).
For more details on Sobolev spaces and proofs of these results, see \cite{BH21} and the references therein.

The space $W^{r,2}$ is a Hilbert space.  Recalling the notation $\dot{H}^r$ from Section~\ref{sec:notation}, it holds that (see, e.g., \cite[Theorems~16.12-13]{Y10})
\begin{equation}
	\label{eq:sobolev_dot_equivalence}
	\dot{H}^r =
	\begin{cases}
		W^{r,2} \text{ if } r \in [0,1/2), \\
		\left\{u \in W^{r,2} : u = 0 \text{ a.e.\ on } \partial \cD \right\} \text{ if } r \in (1/2,3/2) \cup (3/2,2],
	\end{cases}
\end{equation}
with norm equivalence. In the case that $\cD$ has $\cC^2$ boundary, the latter equivalence holds also for $s=3/2$. In either case, it is true that $\dot{H}^s \hookrightarrow W^{s,2}$ also for $s \in \{1/2,3/2\}$.

We can now consider a regularity transfer property, corresponding to Assumption~\ref{ass:reg}\ref{ass:difference-regularity-transfer}.
\begin{proposition}
	Suppose that $f$ is once continuously differentiable with a bounded derivative. Then, for all $\kappa \in [0,1] \setminus \{1/2\}$, there is a constant $C < \infty$ such that
	\begin{equation*}
		\| F(u)-F(v) \|_{\dot{H}^{\kappa}} \le C \big(\| v \|_{\dot{H}^{\kappa}} +  \| u \|_{\dot{H}^{\kappa}}\big) \text{ for all } u, v \in \dot{H}^{\kappa}.
	\end{equation*}
	Suppose instead that $\kappa \in (\max(1,d/2),2], f \colon \R \to \R$ is twice continuously differentiable with bounded derivatives. Then, there is a constant $C < \infty$ such that
	\begin{equation*}
		\| F(u)-F(v) \|_{\dot{H}^{\kappa}} \le C \big(1 + \| v \|^2_{\dot{H}^{\kappa}} +  \| u \|^2_{\dot{H}^{\kappa}}\big) \text{ for all } u, v \in \dot{H}^{\kappa}.
	\end{equation*}
\end{proposition}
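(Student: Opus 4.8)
The plan is to reduce everything to fractional Sobolev norms via the equivalence \eqref{eq:sobolev_dot_equivalence} and then exploit the chain rule together with the embedding \eqref{eq:sobolev-embedding-1} and the multiplication estimate \eqref{eq:sobolev-multiplication}. First I would record the observation that makes the \emph{difference} $F(u)-F(v)$ (rather than $F(u)$ alone) the natural object: for $\kappa>1/2$ every element of $\dot{H}^{\kappa}$ vanishes on $\partial\cD$, so $u=v=0$ there and hence $F(u)-F(v)=f(u)-f(v)=f(0)-f(0)=0$ on $\partial\cD$; combined with $\kappa>d/2\Rightarrow W^{\kappa,2}\hookrightarrow L^\infty(\cD)$ (indeed into continuous functions) this vanishing is classical. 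Thus, once I show $F(u)-F(v)\in W^{\kappa,2}$, the equivalence \eqref{eq:sobolev_dot_equivalence} lets me pass freely between $\|\cdot\|_{\dot{H}^{\kappa}}$ and $\|\cdot\|_{W^{\kappa,2}}$ in both directions.

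For the first assertion ($\kappa\in[0,1]\setminus\{1/2\}$, bounded $f'$) I would argue via the Lipschitz bound $|f(a)-f(b)|\le\|f'\|_\infty|a-b|$. The case $\kappa=0$ is immediate. For $\kappa\in(0,1)$ I use the Gagliardo seminorm: since $|(f(u)-f(v))(x)-(f(u)-f(v))(y)|\le\|f'\|_\infty(|u(x)-u(y)|+|v(x)-v(y)|)$, squaring and integrating against $|x-y|^{-d-2\kappa}$ gives $|F(u)-F(v)|_{W^{\kappa,2}}\lesssim|u|_{W^{\kappa,2}}+|v|_{W^{\kappa,2}}$, while the $L^2$ part is controlled directly by the Lipschitz bound. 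For $\kappa=1$ I instead use $\nabla(f(u)-f(v))=f'(u)\nabla u-f'(v)\nabla v$ and $\|\cdot\|_{\dot{H}^{1}}\eqsim\|\nabla\cdot\|_{L^2}$. Each step yields the claimed \emph{linear} bound.

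The second assertion ($\kappa\in(\max(1,d/2),2]$, $f\in C^2$ with bounded derivatives) is where the real work lies. Using the triangle inequality in $W^{\kappa,2}$ and \eqref{eq:sobolev_dot_equivalence}, I reduce to proving $\|f(w)\|_{W^{\kappa,2}}\lesssim 1+\|w\|^2_{W^{\kappa,2}}$ for $w\in\{u,v\}$; here the additive constant absorbs the nonzero boundary value $f(0)$ (entering through $\|f(w)\|_{L^2}\le|\cD|^{1/2}\|f(w)\|_{L^\infty}$), and the square arises from the chain rule. The embedding $\kappa>d/2$ gives $w\in L^\infty$ and hence uniform bounds on $f(w),f'(w),f''(w)$, controlling the $L^2$ part by $1+\|w\|_{W^{\kappa,2}}$. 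For the top-order part I split according to whether $\kappa=2$ or $\kappa\in(1,2)$: when $\kappa=2$ the chain rule gives $\nabla^2 f(w)=f''(w)\,\nabla w\otimes\nabla w+f'(w)\,\nabla^2 w$, and $\||\nabla w|^2\|_{L^2}=\|\nabla w\|^2_{L^4}\lesssim\|w\|^2_{H^2}$ by $H^1\hookrightarrow L^4$ (valid for $d\le3$); when $\kappa\in(1,2)$, writing $\sigma=\kappa-1\in(0,1)$, I must estimate $\|f'(w)\,\partial_i w\|_{W^{\sigma,2}}$.

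The main obstacle is precisely this fractional product estimate for $d\ge2$, since neither factor has full regularity $\sigma$ ($f'(w)$ is only Lipschitz-composed, and $\partial_i w$ sits exactly in $W^{\sigma,2}$), so the symmetric use of \eqref{eq:sobolev-multiplication} with both factors in $W^{\sigma,2}$ would require the algebra condition $\sigma>d/2$, which need not hold. I would instead expand the Gagliardo seminorm of $f'(w)\,\partial_i w$ into a term with $f'(w)$ frozen in $L^\infty$ (harmless, giving $\lesssim|\partial_i w|_{W^{\sigma,2}}\le\|w\|_{W^{\kappa,2}}$) and a commutator term which, after $|f'(w)(x)-f'(w)(y)|\le\|f''\|_\infty|w(x)-w(y)|$, is dominated by $\iint_{\cD\times\cD}\frac{|w(x)-w(y)|^2\,|\partial_i w(y)|^2}{|x-y|^{d+2\sigma}}\dd x\dd y$. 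This I bound by a H\"older splitting $\|\partial_i w\|^2_{L^{p_1}}\,|w|^2_{W^{\sigma,p_2}}$ with $1/p_1+1/p_2=1/2$, and the two factors are controlled by $\|w\|_{W^{\kappa,2}}$ through the embeddings $W^{\sigma,2}\hookrightarrow L^{p_1}$ and $W^{\kappa,2}\hookrightarrow W^{\sigma,p_2}$ from \eqref{eq:sobolev-embedding-1} (equivalently, by a direct appeal to \eqref{eq:sobolev-multiplication} with these asymmetric exponents). The admissibility of such $p_1,p_2$ amounts exactly to $1-\kappa/d<1/2$, i.e.\ $\kappa>d/2$, so this is where the dimensional hypothesis is consumed; the borderline $\kappa=d/2$ (e.g.\ $\kappa=3/2$, $d=3$) is excluded by the strict inequality $\kappa>\max(1,d/2)$. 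Collecting the $L^2$, commutator, and frozen-coefficient contributions yields the quadratic bound.
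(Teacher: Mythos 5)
Your proof follows essentially the same route as the paper's: the boundary-vanishing observation reducing the claim to a composition bound $\|f(w)\|_{W^{\kappa,2}}\lesssim 1+\|w\|^{2}_{W^{\kappa,2}}$, Lipschitz/chain-rule arguments for $\kappa\le 1$, and, for $\kappa\in(1,2)$, the identical frozen-coefficient/commutator split of the Gagliardo seminorm of $f'(w)\,\partial_i w$ followed by a H\"older--Sobolev argument that consumes $\kappa>d/2$ at the same point (your explicit treatment of $\kappa=2$ is a small bonus the paper leaves implicit). One bookkeeping caveat: with $2/p_1+2/p_2=1$ the commutator integral cannot be bounded by exactly $\|\partial_i w\|^2_{L^{p_1}}\,|w|^2_{W^{\sigma,p_2}}$, because assigning the kernel so that the second factor is the $W^{\sigma,p_2}$ seminorm leaves the borderline non-integrable weight $|x-y|^{-d}$ on the first factor; one must shift a little of the kernel onto the second factor, landing it in $W^{s,p_2}$ for some $s>\sigma$, which still embeds from $W^{\kappa,2}$ precisely because $\kappa>d/2$ is strict --- this is exactly what the paper's weights $\theta_1,\theta_2$ with strict inequalities accomplish, and your argument is repaired the same way.
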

\begin{proof}
	As in several of the proofs in the main part of the paper, we let here (and in the forthcoming results) $C< \infty$ denote a generic constant which may change from line to line but does not depend on \(u\) or \(v\).
	We first note that $F(u)-F(v) = 0$ a.e.\ on $\partial \cD$ as long as both $u$ and $v$ fulfill this property. Therefore, it suffices to show that
	\begin{equation*}
		\| f(u) \|_{W^{\kappa,2}} \le C \| u \|^{\lceil \kappa \rceil}_{W^{\kappa,2}}
	\end{equation*}
	for $\kappa > 0$. When $\kappa < 1$, this follows directly from the Lipschitz property of $f$. For the case that $\kappa = 1$, the chain rule in Sobolev spaces yields that
	\begin{equation*}
		D^{x_i} f(u) = f'(u) D^{x_i} u, \quad i \in \{1,2,3\}.
	\end{equation*}
	Since $f'$ is bounded, this yields $\|f(u)\|_{W^{1,2}} \le C \|u\|_{W^{1,2}}$.

	In light of the definition of the norm in $W^{\kappa,2}$, it only remains to show that
	\begin{equation}
		\label{eq:app:prop:2}
		\| f'(u) D^{x_i} u \|_{W^{\kappa-1,2}} \le C \| u \|^2_{W^{\kappa,2}}.
	\end{equation}
	where $i \in \{1,2,3\}$ and $\kappa \in (\max(d/2,1),2)$. When $d=1$, it follows directly from~\eqref{eq:sobolev-multiplication} that $$\| f'(u) D^{x_i} u \|_{W^{\kappa-1,2}} \le C \| D^{x_i} u \|_{W^{\kappa-1,2}} \| f'(u) \|_{W^{1,2}}.$$ We can therefore assume $d \ge 2$ and note that in this case
	\begin{equation}
		\label{eq:app:prop:3}
		\| D^{x_i} u \|_{W^{\kappa-1,d/(1+d/2-\kappa)}} \le \| u \|_{W^{1,d/(1+d/2-\kappa)}} \le C \| u \|_{W^{\kappa,2}}.
	\end{equation}
	Next, the fact that $f'$ is bounded yields $\| f'(u) D^{x_i} u \| \le C \| u \|_{W^{1,2}} \le C \| u \|_{W^{\kappa,2}}$. For the remaining part of the norm $\| f'(u) D^{x_i} u \|_{W^{\kappa-1,2}}$, we split the integral into two parts via
	\begin{align*}
		&f'(u(x)) (D^{x_i} u)(x) - f'(u(y)) (D^{x_i} u)(y) \\
		&\quad = f'(u(x)) \Big((D^{x_i} u)(x) - (D^{x_i} u)(y)\Big) + (D^{x_i} u)(y) \Big(f'(u(x)) - f'(u(y))\Big).
	\end{align*}
	The first part is bounded by a constant times $\| D^{x_i} u \|_{W^{\kappa-1,2}}$. For the second, we write $\kappa_d := d + 2(\kappa-1)$ and consider $\theta_1,\theta_2 \in \R$ satisfying $\theta_1+\theta_2 = 1$. After using the fact that $f'$ is Lipschitz, H\"older's inequality yields
	\begin{align*}
		&\int_{\cD \times \cD} \frac{|(D^{x_i} u)(y)|^2 |u(x) - u(y)|^2}{|x-y|^{\kappa_d}}\dd x \dd y \\
		&\quad\le \bigg(\int_{\cD \times \cD} \frac{|(D^{x_i} u)(y)|^{\frac{d}{1+d/2-\kappa}}}{|x-y|^{\frac{d\theta_1\kappa_d}{2+d-2\kappa}} }\dd x \dd y\bigg)^{\frac{2 + d - 2\kappa}{d}} \times \bigg(\int_{\cD \times \cD} \frac{|u(x) - u(y)|^{\frac{d}{\kappa-1}}}{|x-y|^{d+(\frac{d\theta_2\kappa_d }{2(\kappa-1)}-d)}}\dd x \dd y\bigg)^{\frac{2(\kappa-1)}{d}}.
	\end{align*}
	We now choose $\theta_1 < (2 + d - 2 \kappa)/\kappa_d$
	which necessitates $\theta_2 = 1 - \theta_1 > 4(\kappa-1)/\kappa_d$. Then, the first integral is bounded by a constant times
	\begin{align*}
		\sup_{y \in \cD} \bigg(\int_{\cD} |x-y|^{-\frac{d\theta_1\kappa_d}{2+d-2\kappa}} \dd x \bigg)^{\frac{2 + d - 2\kappa}{d}} \| D^{x_i} u \|^2_{W^{\kappa-1,d/(1+d/2-\kappa)}} \le C \| u \|^2_{W^{\kappa,2}}
	\end{align*}
	where we used~\eqref{eq:app:prop:3} and the fact that $\theta_1 \kappa_d/(2+d-2\kappa) < 1$. The second integral is bounded by $\| u \|^2_{W^{ \theta_2 \kappa_d /2 - (\kappa-1) ,d/(\kappa-1)}}$. The fact that this quantity is in turn bounded by $\| u \|^2_{W^{\kappa,2}}$ is a consequence of~\eqref{eq:sobolev-embedding-1}. This is applicable when
	\begin{equation*}
		\frac{\kappa-1}{d} \Big( \frac{d \theta_2 \kappa_d}{2(\kappa-1)} - 2 d \Big) \le \kappa - \frac{d}{2} \iff \theta_2 \le \frac{4 (\kappa - 1)}{\kappa_d} + \frac{2 \kappa - d}{\kappa_d},
	\end{equation*}
	and since the right hand side of this inequality is strictly larger than $4(\kappa-1)/\kappa_d$ we can apply~\eqref{eq:sobolev-embedding-1} by choosing $\theta_2$ sufficiently small. This concludes the derivation of \eqref{eq:app:prop:2}.
	\end{proof}
Note that the case that $d = 3$ and $\kappa \in (1,3/2)$ is not included in the statement above. We are not aware of a way to derive such a result. The same restriction applies to a negative norm bound on $F'$, corresponding to Assumption~\ref{ass:reg}\ref{ass:derivative-negnorm}, which we derive next.
\begin{proposition}
	\label{prop:F-Fprime-neg-bound}
	Let $\eta \in (d/2,2]$. If $\kappa \in [0,\min(\eta,1)] \setminus \{1/2\}$, $f \colon \R \to \R$ is twice continuously differentiable with bounded derivatives and $u \in \dot{H}^\kappa$, the operator $F'(u)$ extends to $\cL(\dot{H}^{-\kappa},\dot{H}^{-\eta})$. Moreover, there is a constant $C < \infty$, independent of $u \in \dot{H}^\kappa$, such that
	\begin{equation*}
		\| F'(u) \|_{\cL(\dot{H}^{-\kappa},\dot{H}^{-\eta})} \le C \| u \|_{\dot{H}^{\kappa}}.
	\end{equation*}
	If instead $\kappa \in (\max(1,d/2),\eta] \setminus \{3/2\}, f \colon \R \to \R$ is three times continuously differentiable with bounded derivatives, $d \le 2$ and $u \in \dot{H}^\kappa$, the operator $F'(u)$ extends to $\cL(\dot{H}^{-\kappa},\dot{H}^{-\eta})$. Moreover, there is a constant $C < \infty$, independent of $u \in \dot{H}^\kappa$, such that
	\begin{equation*}
		\| F'(u) \|_{\cL(\dot{H}^{-\kappa},\dot{H}^{-\eta})} \le C (1 + \| u \|_{\dot{H}^{\kappa}}^2).
	\end{equation*}
\end{proposition}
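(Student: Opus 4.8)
The plan is to prove both bounds by duality, reducing each operator-norm estimate to a multiplier (pointwise-product) estimate in a positive-order space. Since $F'(u)$ acts as multiplication by the real-valued function $f'(u)$, it is formally self-adjoint on $H = L^2(\cD)$, so for smooth $v,w$ one has $\langle F'(u)v, w\rangle = \langle v, f'(u)w\rangle$. Using $\dot{H}^{-\kappa} = (\dot{H}^{\kappa})^*$ and $\dot{H}^{-\eta} = (\dot{H}^{\eta})^*$, this identity gives
\[
  \|F'(u)\|_{\cL(\dot{H}^{-\kappa},\dot{H}^{-\eta})} = \sup_{\|w\|_{\dot{H}^\eta}\le 1}\|f'(u)\,w\|_{\dot{H}^{\kappa}},
\]
and the extension of $F'(u)$ to an operator on $\dot{H}^{-\kappa}$ is defined precisely through this pairing. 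Hence it suffices to establish the multiplier estimate $\|f'(u)\,w\|_{\dot{H}^{\kappa}} \le C\big(1 + \|u\|_{\dot{H}^\kappa}^{\lceil\kappa\rceil}\big)\|w\|_{\dot{H}^\eta}$ for all $u \in \dot{H}^\kappa$ and $w \in \dot{H}^\eta$, from which both stated bounds follow.

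First I would dispose of the boundary condition required for membership in $\dot{H}^\kappa$: when $\kappa > 1/2$ the equivalence~\eqref{eq:sobolev_dot_equivalence} demands that the product vanish on $\partial\cD$. Since $\eta > d/2 \ge 1/2$ forces $w = 0$ a.e.\ on $\partial\cD$, the product $f'(u)w$ inherits this, so membership reduces to a $W^{\kappa,2}$ bound (and on the right one uses $\dot{H}^\eta \hookrightarrow W^{\eta,2}$, valid even at $\eta = 3/2$). The core of the argument is then the Sobolev multiplication inequality~\eqref{eq:sobolev-multiplication} applied with $r_0 = \kappa$, $r_1 = \eta$, $p_0 = p_1 = q = 2$ and target $s = \kappa$: conditions~\ref{eq:sobolev-multiplication:ii}--\ref{eq:sobolev-multiplication:iii} hold with equality because $\eta \ge \kappa$, condition~\ref{eq:sobolev-multiplication:iv} reads $\kappa - d/2 < \kappa + \eta - d$, i.e.\ $\eta > d/2$, which is exactly the standing hypothesis, and the last condition holds since $\max(p_0,p_1) = q$. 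This yields $\|f'(u)\,w\|_{W^{\kappa,2}} \le C\|f'(u)\|_{W^{\kappa,2}}\|w\|_{W^{\eta,2}}$.

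It then remains to bound the multiplier norm $\|f'(u)\|_{W^{\kappa,2}}$, for which I would invoke the regularity-transfer arguments of the preceding proposition applied to $g = f'$ rather than to $f$. For Case~1 ($\kappa \le 1$) the hypothesis $f \in \cC^2$ makes $f'$ continuously differentiable with bounded derivative, so those arguments give $\|f'(u)\|_{W^{\kappa,2}} \le C(1 + \|u\|_{W^{\kappa,2}})$ (the additive constant coming from $f'(0)$ is harmless on a bounded domain, and subtracting it also reconciles the boundary condition). For Case~2 ($\kappa \in (1,\eta]$ with $d \le 2$, where $\max(1,d/2)=1$) the hypothesis $f \in \cC^3$ makes $f'$ twice continuously differentiable with bounded derivatives, so the $\kappa \in (\max(1,d/2),2]$ form of the preceding proposition yields $\|f'(u)\|_{W^{\kappa,2}} \le C(1 + \|u\|_{W^{\kappa,2}}^2)$. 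Combining either bound with the multiplication estimate and translating back through~\eqref{eq:sobolev_dot_equivalence} produces the claimed $\cL(\dot{H}^{-\kappa},\dot{H}^{-\eta})$ estimates.

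The main obstacle is the multiplier bound $\|f'(u)\|_{W^{\kappa,2}} \le C(1 + \|u\|_{W^{\kappa,2}}^2)$ in Case~2: for $\kappa > 1$ this is a genuinely nonlinear composition estimate relying on the chain rule $D(f'(u)) = f''(u)\,Du$ and the fractional manipulations used in the proof of the preceding proposition (the split of the Gagliardo difference quotient, H\"older's inequality, and the embedding~\eqref{eq:sobolev-embedding-1}). Precisely those manipulations fail for $d = 3$ and $\kappa \in (1,3/2)$, which is why Case~2 is restricted to $d \le 2$; this restriction is inherited verbatim from the earlier proposition. By contrast, the duality reduction and the verification of the multiplication inequality's hypotheses are routine, the only genuinely delicate input there being the condition $\eta > d/2$, which is exactly what makes the product land back in $\dot{H}^\kappa$.
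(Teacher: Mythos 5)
Your proposal is correct and follows essentially the same route as the paper: both reduce the negative-norm bound by the symmetry of the multiplication operator $F'(u)$ to the positive-order estimate $\|F'(u)\|_{\cL(\dot{H}^{\eta},\dot{H}^{\kappa})}$, which is then obtained from the Sobolev multiplication inequality~\eqref{eq:sobolev-multiplication} together with the bound on $\|f'(u)\|_{W^{\kappa,2}}$ inherited from the arguments of the preceding proposition applied to $f'$. The only difference is cosmetic (you state the duality step first, the paper states it last), and your explicit tracking of the additive constant from $f'(0)$ is if anything slightly more careful than the paper's.
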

\begin{proof}
	We first note that by the same arguments as in the proof of the previous proposition, $\|f'(u)\|_{W^{\kappa,2}} \le C \|u\|_{W^{\kappa,2}}$ when $\kappa \in [0,1]$ while $\|f'(u)\|_{W^{\kappa,2}} \le C (1 + \|u\|^2_{W^{\kappa,2}})$ when $\kappa \in (\max(1,d/2),2)$.

	Consider now the case that $\kappa \in [0,1]$. By~\eqref{eq:sobolev-multiplication}, $\| F'(u) v \|_{W^{\kappa,2}} \le C \| u \|_{W^{\kappa,2}} \| v \|_{W^{\eta,2}}$. Thus,
	\begin{equation*}
		\| A^{\frac{\kappa}{2}} F'(u) A^{-\frac{\eta}{2}}\|_{\cL(H)} =  \| F'(u)\|_{\cL(\dot{H}^{\eta},\dot{H}^{\kappa})} \le C \| F'(u)\|_{\cL(W^{\eta,2},W^{\kappa,2})} \le C \| u \|_{W^{\kappa,2}} \le C \| u\|_{\dot{H}^\kappa}
	\end{equation*}
	for $u \in \dot{H}^\kappa$, where we made use of \eqref{eq:sobolev_dot_equivalence} and the fact that if $v = 0$ a.e.\ on $\partial \cD$, then so is $F'(u)v$. Since $F'(u)$ is symmetric on $H$, we obtain for $v \in H$ that
	\begin{align*}
		\| A^{-\frac{\eta}{2}} F'(u) v \| &= \sup_{\substack{w \in H \\ \| w \|=1}} |\langle A^{-\frac{\eta}{2}} F'(u) v, w \rangle| = \sup_{\substack{w \in H \\ \| w \|=1}} |\langle  A^{-\frac{\kappa}{2}} v, A^{\frac{\kappa}{2}} F'(u) A^{-\frac{\eta}{2}}w \rangle| \\
		&\le \| v \|_{\dot{H}^{-\kappa}} \| A^{\frac{\kappa}{2}} F'(u) A^{-\frac{\eta}{2}}\|_{\cL(H)}
	\end{align*}
	and since $H$ is dense in $\dot{H}^{-\kappa}$, this completes the proof of the first statement. The proof of the second statement is analogous.\end{proof}

Finally, we mention a negative norm result on the second derivative of $F$. For this we recall that when $f$ is twice continuously differentiable with bounded derivatives, then $F \in \cG^2(H,\dot{H}^{-\eta})$ for $\eta \in (d/2,2]$. In fact, $F''(u)(v_1,v_2)$ is even an element of $L^1(\cD)$, given by
\begin{align*}
	(F''(u)(v_1,v_2))(x) = f''(u(x))v_1(x)v_2(x) , \quad \text{for }u,v_1,v_2 \in H \text{ and a.e. } x \in \cD.
\end{align*}
The proof is essentially the same as in \cite[Example~3.2]{W16} and therefore omitted.

\begin{proposition}
	\label{prop:F-Fdoubleprime}
	Suppose that the mapping $F$ is given by \eqref{eq:composition-mapping} and let $\eta \in (d/2,2]$. If $f \colon \R \to \R$ is twice continuously differentiable with bounded derivatives, there is a constant $C < \infty$ such that
	\begin{equation*}
		\| F''(u) (v_1,v_2) \|_{\dot{H}^{-\eta}} \le C \| v_1 \| \| v_2 \| \text{ for all } u, v_1, v_2 \in H.
	\end{equation*}
\end{proposition}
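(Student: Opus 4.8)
The plan is to combine the explicit pointwise form of $F''$ with a duality characterization of the $\dot{H}^{-\eta}$ norm, exactly in the spirit of the argument already used for Proposition~\ref{prop:F-Fprime-neg-bound}. Since $f$ is twice continuously differentiable with bounded derivatives, the recalled identity gives $(F''(u)(v_1,v_2))(x) = f''(u(x))v_1(x)v_2(x)$ for a.e.\ $x \in \cD$. As $v_1, v_2 \in H = L^2(\cD)$ and $\cD$ is bounded, the product $v_1 v_2$ lies in $L^1(\cD)$, so with $f''$ bounded the target function $w := F''(u)(v_1,v_2)$ belongs to $L^1(\cD)$ with $\|w\|_{L^1} \le \|f''\|_{L^\infty(\R)}\|v_1\|\|v_2\|$.

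First I would record the duality identity for the negative-order norm. Writing $\psi = A^{-\eta/2}\phi$ with $\phi \in H$, so that $\|\psi\|_{\dot{H}^\eta} = \|\phi\|$, the self-adjointness of the fractional powers of $A$ gives, for $w \in H$,
\[
\|w\|_{\dot{H}^{-\eta}} = \|A^{-\eta/2}w\| = \sup_{\|\phi\|=1} |\langle w, A^{-\eta/2}\phi\rangle| = \sup_{\|\psi\|_{\dot{H}^\eta}=1} |\langle w, \psi\rangle|,
\]
where $\phi$ and $\psi$ range over the unit balls of $H$ and $\dot{H}^\eta$, respectively. This identifies $\dot{H}^{-\eta}$ with the dual of $\dot{H}^\eta$, and the rightmost expression continues to make sense for the $L^1$-function $w$ once we know that $w$ acts boundedly on $\dot{H}^\eta$.

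The decisive input is a Sobolev embedding into $L^\infty$. Because $\eta \in (d/2,2]$ satisfies $\eta > d/2$, the embedding~\eqref{eq:sobolev_dot_equivalence} together with the classical Sobolev embedding theorem yields $\dot{H}^\eta \hookrightarrow W^{\eta,2} \hookrightarrow C(\bar{\cD}) \hookrightarrow L^\infty(\cD)$, with $\|\psi\|_{L^\infty} \le C \|\psi\|_{\dot{H}^\eta}$. Given this, I would bound the pairing by the generalized H\"older inequality with exponents $(2,2,\infty)$:
\[
|\langle w, \psi\rangle| = \Big| \int_\cD f''(u)\,v_1\,v_2\,\psi \dd x \Big| \le \|f''\|_{L^\infty(\R)}\,\|v_1\|\,\|v_2\|\,\|\psi\|_{L^\infty} \le C\,\|v_1\|\,\|v_2\|\,\|\psi\|_{\dot{H}^\eta}.
\]
Taking the supremum over $\psi$ with $\|\psi\|_{\dot{H}^\eta} \le 1$ shows simultaneously that $w$ defines an element of $\dot{H}^{-\eta}$ and that $\|w\|_{\dot{H}^{-\eta}} \le C\|v_1\|\|v_2\|$, which is the claim.

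The main obstacle is conceptual rather than computational: the target $w$ lies only in $L^1(\cD)$, not in $H$, so the negative norm has to be read through the duality pairing rather than through $A^{-\eta/2}w \in H$. The embedding $\dot{H}^\eta \hookrightarrow L^\infty$ is precisely what makes $\psi \mapsto \int_\cD w\,\psi \dd x$ a bounded functional on $\dot{H}^\eta$, hence an element of $(\dot{H}^\eta)^* = \dot{H}^{-\eta}$. This embedding is exactly where the hypothesis $\eta > d/2$ is essential, since it fails at the critical value $\eta = d/2$; this is the same threshold that underlies the restriction on $\eta$ in Example~\ref{ex:heat}.
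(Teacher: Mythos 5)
Your proof is correct, and it is precisely the standard duality-plus-Sobolev-embedding argument that the paper itself defers to (it omits the proof, pointing to \cite[Example~3.2]{W16}): pair $f''(u)v_1v_2 \in L^1(\cD)$ against test functions in $\dot{H}^\eta \hookrightarrow L^\infty(\cD)$, which is exactly where $\eta > d/2$ enters. Nothing is missing; your explicit remark that the negative norm must be read through the duality pairing, since $w$ need not lie in $H$, is the one point worth being careful about and you handle it correctly.
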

\end{document}